\documentclass[12pt,a4paper]{article}
\usepackage{amsmath,amssymb,amsthm,amsfonts,amscd,euscript,verbatim, t1enc, newlfont, xypic}
\usepackage{hyperref}

\hfuzz2pt
 \theoremstyle{plain}
\newtheorem{theo}{Theorem}[subsection]

\newtheorem{pr}[theo]{Proposition}

 \newtheorem{lem}[theo]{Lemma}
 \newtheorem{coro}[theo]{Corollary}
  \newtheorem{conj}[theo]{Conjecture}
\theoremstyle{remark}
\newtheorem{rema}[theo]{Remark}

\theoremstyle{definition}
\newtheorem{defi}[theo]{Definition}
\newtheorem*{notat}{Notation}

 \newcommand\lan{\langle}
\newcommand\ra{\rangle}

\newcommand\ob{^{-1}}

\newcommand\dmge{DM^{eff}_{gm}{}}

\newcommand\dmgm{DM_{gm}}

\newcommand\obj{Obj}

\newcommand\id{id}
\newcommand\cu{\underline{C}}
\newcommand\du{{\underline{D}}}
\newcommand\eu{{\underline{E}}}
\newcommand\au{\underline{A}}
\newcommand\bu{\underline{B}}

\newcommand\aum{\underline{A}_m}
\newcommand\mmi{MM_i}
\newcommand\mmm{MM_m}

\newcommand\mm{MM}
\newcommand\spe{{\operatorname{Spec}\,}}

\newcommand\hrt{{\underline{Ht}}}
\newcommand\hrtl{{\underline{Ht}_l}} 
\newcommand\htlz{H^{t_l}_0}
\newcommand\hw{{\underline{Hw}}}

\newcommand\wmmli{W_{\le i,MM}}

\newcommand\z{{\mathbb{Z}}}

\newcommand\zlz{\z/l\z}

\newcommand\ql{{\mathbb{Q}_l}}
\newcommand\qlp{{\mathbb{Q}_{l'}}}
\newcommand\qp{{\mathbb{Q}_p}}

\newcommand\zol{{\mathbb{Z}[\frac{1}{l}]}}
\newcommand\szol{{\spe \mathbb{Z}[\frac{1}{l}]}}
\newcommand\zollp{\spe\mathbb{Z}[\frac{1}{l},\frac{1}{l'}]}

\newcommand\q{{\mathbb{Q}}}
\newcommand\sq{{\operatorname{Spec}\mathbb{Q}}}

\newcommand\p{\mathbb{P}}

\newcommand\ns{\{0\}}

\DeclareMathOperator\prli{\varprojlim}
\DeclareMathOperator\inli{\varinjlim}

\DeclareMathOperator\gal{\operatorname{Gal}}

\newcommand\chow{Chow}

\newcommand\chowe{Chow^{eff}}

\newcommand\dshsl{D^b_cSh^{et}(S[1/l],\ql)}
\newcommand\dshkl{D^b_cSh^{et}(K,\ql)} 

\newcommand\dhsl{D^b_cSh^{et}(S,\ql)}

\newcommand\dhslw{{\tilde D^b_cSh^{et}(S,\ql)}}
\newcommand\dhsli{D^b_cSh^{et}(S_i,\ql)}
\newcommand\dhuli{D^b_cSh^{et}(U_i,\ql)}
\newcommand\dhzl{D^b_cSh^{et}(Z,\ql)}

\newcommand\dhupl{D^b_cSh^{et}(U',\ql)}
\newcommand\dhuppl{D^b_c Sh^{et}(U'',\ql)}

\newcommand\dhklp{D^b_c Sh^{et}(K,\qlp)}

\newcommand\dshslp{D^b_c Sh^{et}(S[1/l'],{\mathbb{Q}_{l'}})}
\newcommand{\tlp}{{t_{l'}}}
\newcommand{\tllp}{{t_{l,l'}}}

\newcommand\dshs{D^b_c Sh^{et}(S,\ql)}
\newcommand\shs{Sh_{per}^{et}(S,\ql)}
\newcommand\shm{Sh_{per}^{et}(-,\ql)}
\newcommand\dshl{D^b_c Sh^{et}(-,\ql)}

\newcommand\zl{{\mathbb{Z}_l}}
\newcommand\dmhsz{DM_h(S,\z)} 
\newcommand\dbcszl{D^b_c (S,\zl)}

 \DeclareMathOperator\ke{\operatorname{Ker}}
 
\DeclareMathOperator\imm{\operatorname{Im}}
\DeclareMathOperator\co{\operatorname{Cone}}

\DeclareMathOperator\cha{\operatorname{char}}

\newcommand\hetl{{\mathcal{H}}^{et}_{\ql}{}}
\newcommand\hetlw{{\tilde{\mathcal{H}}}^{et}_{\ql}{}}

\newcommand\hetlz{H^{et}_{\ql,0}{}}

\newcommand\hetlwz{{\tilde H}^{et}_{\ql,0}{}}
\newcommand\hetlm{H^{et}_{\ql,m}{}}
\newcommand\hetlmw{{\tilde H^{et}_{\ql,m}}{}}

\newcommand\hetln{H^{et}_{\ql,n}{}}
\newcommand\hetlj{H^{et}_{\ql,j}{}}

\newcommand\hetlp{{\mathcal{H}}^{et}_{{\mathbb{Q}_{l'}}}{}}
\newcommand\hetp{{\mathcal{H}}^{et}_{\qp}{}}

\newcommand\dms{DM(S)}
\newcommand\dmcs{DM_c(S)}

\newcommand\dmcu{DM_c(U)}
\newcommand\dmck{DM_c(K)}

\newcommand\dmcx{DM_c(X)}
\newcommand\dm{DM}
\newcommand\dmc{DM_c}

\newcommand\dmcy{DM_c(Y)}
\newcommand\chows{Chow(S)}
\newcommand\hwchow{{\underline{Hw}}_{\chow}}

\newcommand\wchow{{w_{Chow}}}

\newcommand\sss{{\mathcal{S}}}

\newcommand\fp{\mathbb{F}_p}
\newcommand\fl{\mathbb{F}_l}

\newcommand\sfp{\operatorname{Spec}\mathbb{F}_p}

\newcommand\dbm{D^b_m}

\begin{document}

 \title{Mixed motivic sheaves (and weights for them) exist if 'ordinary' mixed motives do}
 \author{Mikhail V. Bondarko
   \thanks{ 
 The work is supported by RFBR
(grants no. 12-01-33057 and 14-01-00393). 
} }
 \maketitle
\begin{abstract}
The goal of this paper is to prove: if certain 'standard' conjectures on motives over algebraically closed fields hold, then over any 'reasonable' scheme $S$ there exists a {\it motivic} $t$-structure for the category $\dmcs$ of relative Voevodsky's motives (being more precise, for the Beilinson motives described by Cisinski and Deglise). If $S$ is 
of finite type over a field, then the  heart of this $t$-structure (the category of {\it mixed motivic sheaves} over $S$) is endowed with a {\it weight filtration} with semisimple factors. 
We also prove a certain 'motivic decomposition theorem' (assuming the conjectures mentioned) and characterize semisimple 
motivic sheaves 
over 
$S$ in terms of those over its residue fields. 

Our main tool is the theory of {\it weight structures}. We actually prove somewhat more than the existence of a weight filtration for mixed motivic sheaves: we prove that the motivic $t$-structure is {\it transversal} to the Chow weight structure for $\dmcs$ (that was introduced previously 
 by D. H\'ebert and the author). We also deduce several properties of mixed motivic sheaves from this fact. Our reasoning relies on the degeneration of {\it Chow-weight spectral sequences}  for 'perverse \'etale homology' (that we prove unconditionally); this statement also yields the existence of  the {\it Chow-weight filtration} for such (co)homology that is strictly restricted by ('motivic') morphisms.

\end{abstract}

\tableofcontents

\section*{Introduction}

The famous conjectures of Beilinson (see \S5.10A in \cite{beilh}) predict the existence of an abelian category
$MM(S)$ of {\it mixed motivic sheaves} over a (more or less, arbitrary) scheme $S$. This category should be endowed with a so-called {\it weight filtration} whose factors are semisimple; it should possess an exact realization to the category of perverse ($\ql$-) \'etale sheaves. 
The goal of this paper is to deduce these conjectures
from certain 'standard' conjectures on motives over algebraically closed fields.

Now we explain this in more detail.  It is  widely believed that 
$MM(S)$ should be the heart of a certain (motivic) $t$-structure for some triangulated category 
 of (Voevodsky's) motives over 
$S$. 
In this paper we treat this question for the category $\dmcs$ 
 of constructible Beilinson motives (as described in \cite{degcis}) over a 
 ({\it very reasonable})
 base scheme $S$, and prove that a ('nice') motivic $t$-structure exists for it if it exists for Voevodsky's motives over algebraically closed fields.
Recall here: already the latter assumption requires certain very hard 'standard' conjectures (especially for positive characteristic fields; see \S\ref{sconj} below and \S2 of \cite{ha3} for a 
discussion of those), yet it is nice to know that 
passing to relative motives in this matter conceals no additional difficulties. 
Note also:  the paper 
\cite{haco} relies on the same  conjectures that we need for our main results, whereas in ibid. only the properties of (certain) 'pure' relative motivic sheaves (and only for $S$ being a variety over a characteristic $0$ field) are established. 
In particular, we  prove a certain motivic version of the Decomposition Theorem for perverse sheaves (see \S\ref{sdecomp}) that is much stronger than the corresponding result of \cite{haco}. 
We also characterize 
 simple 
 mixed motivic sheaves (those are certainly 'pure') in terms of those over the residue fields of $S$. 
Certainly, the results of \cite{degcis} are crucial for our success here.

Now we describe our central results in more detail, and also mention the main prerequisites for their proofs.

Our first principal result is the following one. Suppose that for some fixed prime $l$ and for any universal domain $K$ (of characteristic distinct from $l$) there exists a $t$-structure $t_{MM}$ for the category $\dmgm(K)$ of Voevodsky's motives over $K$ that is strictly compatible with the (canonical) $t$-structure for $\ql$-adic \'etale sheaves (via 
 \'etale 
homology; we call 
the heart of $t_{MM}$ 
the category of mixed motives over $K$). Then 
$t_{MM}$ also exists for motives over any 'reasonable' (see below) $\spe\zol$-scheme $S$. So, one may say that a certain $MM(S)$ exists in this case. The proof is quite easy (given the properties of Beilinson motives established in \cite{degcis}); 
we just apply a simple gluing argument. 
Actually, it is not necessary to fix $l$ here: if for each $K$ of characteristic $p$ there exists a (motivic) $t$-structure for $\dmgm(K)$ that is strictly compatible (as above) with $\qlp$-adic \'etale (co)homology for any $l'\neq p$, then the motivic $t$-structure exists over any (reasonable) $S$ (and it does not depend on $l$). 

The second central result seems to be more interesting; its proof is more  complicated. We verify that  certain  'weights' exist for mixed motives over any 
 {\it very reasonable} (see Definition \ref{dmor}(4) below) scheme $S$; if $\cha S=0$ or if "the weights are nice" over a universal $K$ such that $\cha K=\cha S$, then these weights are 'nice' over $S$ also.
This sentence requires a considerable amount of explanation, and we give it here. 

The 'classical' approach for constructing weights for motives (originating from Beilinson) 
was to define a filtration for motives that would split Chow motives into their components corresponding to single (co)homology groups (i.e would yield the so-called Chow-Kunneth decompositions). Since  the existence of Chow-Kunneth decompositions is very much conjectural, 
 it is no wonder  that this approach 
 has not yielded any significant (general) results up to this moment (to the knowledge of the author).  

An alternative method for defining (certain) weights for motives
was proposed and successfully implemented in \cite{bws}. To this
end {\it weight structures} for triangulated categories were defined. This notion is a natural
important counterpart of $t$-structures; somewhat similarly to
$t$-structures, weight structures for a triangulated $\cu$ are
defined in terms of $\cu_{w\le 0},\cu_{w\ge 0}\subset \obj\cu$. For the Chow weight structure $\wchow$ for $\dmcs$ 
its {\it heart} $\dmcs_{\wchow\le 0}\cap\dmcs_{\wchow\ge 0}$  consists of Chow motives (over $S$; those are 'ordinary' Chow motives if $S$ is the spectrum of a perfect field); we avoid Chow-Kunneth decompositions this way.
 $\wchow$  allows  to define certain
{\it (Chow)-weight filtrations} and {\it (Chow)-weight spectral sequences} for
any (co)homology of motives; for singular and \'etale cohomology
those are isomorphic to the 'classical' ones. 
$\wchow$ for  
$\dmcs$ was introduced in \cite{hebpo} and \cite{brelmot}; it is closely
related with the weights for  mixed complexes of sheaves (as introduced in
\S5.1.8 of \cite{bbd}; see \S\S3.4--3.6 
 of \cite{brelmot}) and of mixed Hodge complexes and modules (see \S2.3 of \cite{btrans}). All of these results are unconditional.
 
In \cite{btrans} and (especially) in the current paper we demonstrate that the Chow weight structure is also useful for the study of motivic conjectures. In particular, we prove (using the results of \S3 of \cite{btrans}): if $t_{MM}$ exists over a scheme $S$ that is of finite type over a field, then Chow-weight spectral sequences yield a  weight filtration for 
$S$-motivic sheaves;  this filtration is strictly respected by morphisms of motives.
Our argument relies on the degeneration at $E_2$ of  Chow-weight spectral sequences for the 'perverse \'etale homology'.  We prove the latter result unconditionally. It also yields the existence of the Chow-weight filtration for perverse \'etale (co)homology of motives that is strictly restricted by 'motivic' morphisms; so it could be useful for itself. The proof relies on certain new 'continuity' properties of the Chow weight structure.

Moreover, in (\S1 of) ibid. also the conjectural relation of $\wchow$ with the motivic $t$-structure was axiomatized. The corresponding notion of {\it transversal} weight and $t$-structures was introduced, and several equivalent definitions of transversality were given.
So, we actually prove: if 
over a universal domain $K$ of characteristic $p$ (that could be $0$) $t_{MM}$ exists and is transversal to $\wchow$, then the same is true for $\dmcs$ for any very reasonable $S$  of characteristic $p$. 

This 'triangulated' approach to weights (for mixed motives) has serious advantages over the (usual) 'abelian' version. First, it allows to combine the conjectural properties of mixed motives with  unconditional results on the Chow weight structure (and on Chow-weight spectral sequences). We obtain 
some 'new' properties of mixed motivic sheaves  this way; note that (by  virtue of our results) all of them follow from  'standard' motivic conjectures (cf. the discussion in \S\ref{sconj} below). Besides, we obtain a description of weights for mixed motives whose only conjectural ingredient is the existence of 
$t_{MM}$. Lastly note that the 'triangulated' 
approach  allows us to apply a certain gluing argument (that heavily relies on \S1.4 of \cite{bbd}) that does not seem to work in the context of filtered abelian categories. 

Summarizing: we prove that if a certain list of standard (motivic) conjectures over algebraically closed fields hold, then the category of mixed motivic sheaves exists over any reasonable scheme $S$; for $S$ that is also very reasonable we  obtain 'nice weights' for $S$-motivic sheaves. 
We also deduce 
(most of) the properties 
of this category that were conjectured by Beilinson and others, and prove some of their 'triangulated extensions'. Besides, we prove a certain 'motivic decomposition theorem', and calculate the Grothendieck group of mixed motivic sheaves. 

Lastly, we note that the results of the current paper (as well as the results of \cite{brelmot}) 
only rely upon a certain 'axiomatics' of Beilinson motives (i.e. on a certain list of their properties; cf. Remark \ref{rmain}(1) below). It follows that our arguments could be applied to the study of other categories satisfying similar properties. A natural candidate here would be M. Saito's Hodge modules. Yet it seems that this setting has been already thoroughly studied by Saito himself  (cf. Proposition 2.3.1(I) of \cite{btrans}); on the other hand, our methods could possibly yield certain simplifications for his arguments.

Now we list the contents of the paper. More details can be
found at the beginnings of 
sections.

\S1 is dedicated to the recollection of certain homological algebra. We recall some basics of $t$-structures. We also remind the reader  basic definitions and results on weight structures, weight filtrations and spectral sequences, as well as the notion of transversality of weight and $t$-structures (following \cite{bws} and \cite{btrans}). We also recall (mostly from \S1.4 of \cite{bbd}) several basic results on gluing of $t$-structures and weight structures.

In \S2 we recall the basic properties of $S$-motives (as defined and studied in \cite{degcis}) and the Chow weight structure for them (as introduced in \cite{hebpo} and \cite{brelmot}; we prove some new 'continuity' properties of the Chow weight structure). We also recall some properties of the perverse $t$-structure for $\ql$-\'etale sheaves, and 
study weight spectral sequences for the 'perverse \'etale homology'. Those degenerate
at $E_2$ if $S$ is a very reasonable scheme; we conjecture that they degenerate for a general (reasonable) $S$ also.

In \S3 we define the motivic $t$-structure (when it exists) as the one that is (strictly) compatible with the perverse $t$-structure for complexes of $\ql$-adic sheaves. We prove that the   motivic $t$-structure exists over $S$ if it exists over (all) universal domains. 
We also deduce some simple consequences from the 'niceness' of the motivic $t$-structure (i.e. of its transversality with $\wchow$). They enable us to prove: over a very reasonable scheme there exists a nice $t_l$  if the same is true over some universal domain of the same characteristic.

In \S4 we verify that the existence of a (nice) motivic $t$-structure and its independence from $l$ follows from a certain  list of (more or less) 'standard' motivic conjectures (over algebraically closed base fields). We also prove a  certain 'motivic Decomposition Theorem' (modulo the conjectures mentioned). 
In particular, we characterize semisimple (pure) motives over $S$ in terms of those over its residue fields. This enables us to calculate $K_0(\dmcs)$. 

The author is deeply grateful to prof. B. Conrad, prof. D.-Ch. Cisinski,   prof. M. de Cataldo,  prof.
F. Deglise,   prof. V. Guletskii, and to the users of the Mathoverflow forum for their interesting comments and important advice.  
He would also like to express his gratitude to the officers and the guests of the Max Planck Institut f\"ur Mathematik, as well as to prof. M. Levine and  the Essen University, and to prof. F. Lecomte and the Strasbourg University,  for the wonderful working conditions during the work on this paper.

\begin{notat}

$\cu$ below will always denote some triangulated category.
$t$ will always denote a bounded $t$-structure,
and $w$ will be a bounded weight structure  (the theory
of weight structures was thoroughly studied  in \cite{bws}; see also \S\ref{sws}
 below).


$D\subset \obj \cu$ will be
called extension-stable 
    if for any distinguished triangle $A\to B\to C$
in $\cu$ we have: $A,C\in D\implies
B\in D$. Note that $\cu^{t\le i}$, $\cu^{t\ge i}$, $\cu^{t=0}$ (see \S\ref{dtst}), 
 $ \cu_{w\ge i}$, and $\cu_{w\le i}$ (see \S\ref{sws}) 
 are extension-stable for any $t,w$ and any $i
 \in \z$.

For $D,E\subset \obj \cu$ we will write $D\perp E$ if $\cu(X, Y)=\ns$
 for all $X\in D,\ Y\in E$.
 For $D\subset \cu$ we will denote by $D^\perp$ the class
$$\{Y\in \obj \cu:\ X\perp Y\ \forall X\in D\}.$$
Dually, ${}^\perp{}D$ is the class
$\{Y\in \obj \cu:\ Y\perp X\ \forall X\in D\}$.

A full subcategory $B\subset \cu$ is called {\it Karoubi-closed} in $\cu$
if $B$ contains all
$\cu$-retracts of its objects
For $B\subset \cu$ we will call  the subcategory of $\cu$ whose objects are  all
retracts of  objects of $B$ (in $\cu$) the {\it Karoubi-closure} of $B$
in $\cu$.

  For a class of
objects $C_i\in\obj\cu$, $i\in I$, we will denote by $\lan C_i\ra$
the smallest strictly full triangulated subcategory containing all $C_i$. 
We will call the  Karoubi-closure of $\lan C_i\ra$ in $\cu$ the {\it triangulated category  generated by $C_i$}.


$\au$ will always be an abelian category.
We will call a covariant (resp. contravariant)
additive functor $H:\cu\to \au$ 
 {\it homological} (resp. {\it cohomological}) if
it converts distinguished triangles into long exact sequences. 

\end{notat}

\section{Preliminaries on triangulated categories, weight- and $t$-structures}\label{stria}

In \S\ref{sws} we recall some basics 
on weight structures (as developed in \cite{bws}). 

In \S\ref{dtst} we recall the definition of a $t$-structure and introduce some notation. 

In \S\ref{swss} we study weight spectral sequences (following \S2 of \cite{bws} and \S3 of \cite{btrans}), their degeneration, and weight filtrations for $\hrt$ coming from $w$.

In \S\ref{strans} we recall the notion of transversal weight and $t$-structures (as introduced in \cite{btrans}).

In \S\ref{sglu} we prove (heavily relying upon \S1.4 of \cite{bbd}) several auxiliary statements on  $t$-structures and weights in the 'gluing setting'.

\subsection{Weight structures: short reminder}\label{sws}

\begin{defi}\label{dwstr}
I A pair of subclasses $\cu_{w\le 0},\cu_{w\ge 0}\subset\obj \cu$ 
will be said to define a weight
structure $w$ for $\cu$ if 
they  satisfy the following conditions:

(i) $\cu_{w\ge 0},\cu_{w\le 0}$ are additive and Karoubi-closed in $\cu$
(i.e. contain all $\cu$-retracts of their objects).

(ii) {\bf Semi-invariance with respect to translations.}

$\cu_{w\le 0}\subset \cu_{w\le 0}[1]$, $\cu_{w\ge 0}[1]\subset
\cu_{w\ge 0}$.

(iii) {\bf Orthogonality.}

$\cu_{w\le 0}\perp \cu_{w\ge 0}[1]$.

(iv) {\bf Weight decompositions}.

 For any $M\in\obj \cu$ there
exists a distinguished triangle
\begin{equation}\label{wd}
B\to M\to A\stackrel{f}{\to} B[1]
\end{equation} 
such that $A\in \cu_{w\ge 0}[1],\  B\in \cu_{w\le 0}$.

II The category $\hw\subset \cu$ whose objects are
$\cu_{w=0}=\cu_{w\ge 0}\cap \cu_{w\le 0}$, $\hw(Z,T)=\cu(Z,T)$ for
$Z,T\in \cu_{w=0}$,
 will be called the {\it heart} of 
$w$.

III $\cu_{w\ge i}$ (resp. $\cu_{w\le i}$, resp.
$\cu_{w= i}$) will denote $\cu_{w\ge
0}[i]$ (resp. $\cu_{w\le 0}[i]$, resp. $\cu_{w= 0}[i]$).

IV We will  say that $(\cu,w)$ is {\it  bounded}  if
$\cup_{i\in \z} \cu_{w\le i}=\obj \cu=\cup_{i\in \z} \cu_{w\ge i}$.

V Let $\cu$ and $\cu'$ 
be triangulated categories endowed with
weight structures $w$ and
 $w'$, respectively; let $F:\cu\to \cu'$ be an exact functor.

$F$ will be called {\it left weight-exact} 
(with respect to $w,w'$) if it maps
$\cu_{w\le 0}$ to $\cu'_{w'\le 0}$; it will be called {\it right weight-exact} if it
maps $\cu_{w\ge 0}$ to $\cu'_{w'\ge 0}$. $F$ is called {\it weight-exact}
if it is both left 
and right weight-exact.

\end{defi}

\begin{rema}\label{rstws}

1. A  simple (and yet useful) example of a weight structure comes from the stupid
filtration on the homotopy categories
$K(B)\supset K^b(B)$ of cohomological complexes for an arbitrary additive category $B$. 
In this case
$K(B)_{w\le 0}$ (resp. $K(B)_{w\ge 0}$) is the class of complexes that are
homotopy equivalent to complexes
 concentrated in degrees $\ge 0$ (resp. $\le 0$).  The heart of this weight structure (either for $K(B)$ or for $K^b(B)$)
is  the Karoubi-closure  of $B$
 in the corresponding category. 

2. A weight decomposition (of any $M\in \obj\cu$) is (almost) never canonical.
Yet for an $m\in \z$ we will often need an (arbitrary) choice of a weight decomposition of $M[-m]$ shifted by $[m]$. This way we obtain a distinguished triangle \begin{equation}\label{ewd} w_{\le m}M\to M\to w_{\ge m+1}M \end{equation} 
with some $ w_{\ge m+1}M\in \cu_{w\ge m+1}$, $ w_{\le m}M\in \cu_{w\le m}$ (see Remark 1.2.2 of \cite{bws}); we will use this notation below (though $w_{\ge m+1}M$ and $ w_{\le m}M$ are not canonically determined by $M$, unless we impose some additional restrictions on these objects).

3. {\bf Caution on signs of weights.} When the author defined weight structures (in \cite{bws}), he considered $(\cu^{w\le 0}, \cu^{w\ge 0})$  such that $\cu^{w\le 0}$ is stable with respect to $[1]$ (similarly to the usual convention for $t$-structures); in particular, this meant that for $\cu=K(B)$  and for the 'stupid' weight structure for it mentioned above  a complex $C$ whose only non-zero term is the fifth one (i.e. $C^5\neq 0$) was 'of weight $5$'. Whereas this ({\it cohomological}) convention seems to be quite natural, for weights of mixed Hodge complexes, mixed Hodge modules (see Proposition 2.6 of \cite{btrans}), and mixed complexes of sheaves (see Proposition 3.6.1 of \cite{brelmot} and Proposition \ref{phuwe}(I) below) 'classically' exactly the opposite convention was used 
 (so, in this convention our $C$ is of weight $-5$). For this reason, in the current paper we use the 'reverse' ({\it homological}) convention for the signs of weights, that is compatible with the  'classical weights' (this convention for the Chow weight structure for motives was  used in \cite{hebpo},  
 in \cite{btrans}, and in \cite{brelmot}); so the signs of weights used below will be opposite to those in \cite{bws} and in \cite{bger}. 

\end{rema}

Now we recall those properties of weight structures that
will be needed below. 

\begin{pr} \label{pbw}
Let $\cu$ be a triangulated category.  $w$ will be a weight structure for
 $\cu$ everywhere except assertion \ref{idual}. 

\begin{enumerate}

\item\label{idual}
$(C_1,C_2)$ ($C_1,C_2\subset \obj \cu$) define a weight structure for $\cu$ if and only if
$(C_2^{op}, C_1^{op})$ define a weight structure for $\cu^{op}$.

\item\label{iext} 
  $\cu_{w\le 0}$, $\cu_{w\ge 0}$, and $\cu_{w=0}$
are extension-stable.

\item\label{iort} Let $w$  be a weight structure for
 $\cu$. Then $\cu_{w\ge 0}=(\cu_{w\le -1})^\perp$ and
 $\cu_{w\le 0}={}^\perp \cu_{w\ge 1}$ (see Notation).

\item\label{iwfun} Let $\cu$ and $\du$ be triangulated categories endowed with weight structures $w$ and $v$, respectively; let $w$ be bounded. Then an exact functor $F:\cu\to \du$  is left (resp. right) weight-exact if and only if $F(\cu_{w=0})\subset \du_{v\le 0}$ (resp. $F(\cu_{w=0})\subset \du_{v\ge 0}$).

\item\label{iuni} 
Suppose that 
$v$ is another weight structure for $\cu$; let $\cu_{v\le
0}\subset \cu_{w\le 0}$ and $\cu_{v\ge 0}\subset \cu_{w\ge 0}$.
Then $v=w$ (i.e. the inclusions are equalities).

\item \label{ipost} If $w$ is bounded, then $\cu_{w\le 0}$ is the smallest extension-stable subclass of $\obj \cu$ containing  $\cup_{i\le 0}\cu_{w=i}$; $\cu_{w\ge 0}$ is the smallest extension-stable class of $\obj \cu$ containing  $\cup_{i\ge 0}\cu_{w=i}$.


\end{enumerate}
\end{pr}
\begin{proof}

Most 
of the assertions were proved in \cite{bws} (pay attention to Remark \ref{rstws}(3)!); see 
 Proposition 1.2.3 of \cite{brelmot} for more details.
\end{proof}

\subsection{$t$-structures: a very short reminder and notation}
\label{dtst} 

To fix the notation we recall the definition of a $t$-structure.

\begin{defi}\label{dtstr}

A pair of subclasses  $\cu^{t\ge 0},\cu^{t\le 0}\subset\obj \cu$
 will be said to define a
$t$-structure $t$ if 
they satisfy the
following conditions:

(i) $\cu^{t\ge 0},\cu^{t\le 0}$ are strict i.e. contain all
objects of $\cu$ isomorphic to their elements.

(ii) $\cu^{t\ge 0}\subset \cu^{t\ge 0}[1]$, $\cu^{t\le
0}[1]\subset \cu^{t\le 0}$.

(iii) {\bf Orthogonality}. $\cu^{t\le 0}[1]\perp
\cu^{t\ge 0}$.

(iv) {\bf $t$-decompositions}.

For any $M\in\obj \cu$ there exists
a distinguished triangle
\begin{equation}\label{tdec}
A\to M\to B{\to} A[1]
\end{equation} such that $A\in \cu^{t\le 0}, B\in \cu^{t\ge 0}[-1]$.

\end{defi}

Bounded 
$t$-structures
can be defined similarly to Definition \ref{dwstr}(IV). 

We will need some more notation and properties
for $t$-structures.

\begin{defi} \label{dt2}

1. The category $\hrt$ whose objects are $\cu^{t=0}=\cu^{t\ge 0}\cap
\cu^{t\le 0}$, $\hrt(X,Y)=\cu(X,Y)$ for $X,Y\in \cu^{t=0}$,
 will be called the {\it heart} of
$t$. Recall 
that $\hrt$ is always
abelian; short exact sequences in $\hrt$ come from distinguished
triangles in
$\cu$.

2. $\cu^{t\ge l}$ (resp. $\cu^{t\le l}$) will denote $\cu^{t\ge
0}[-l]$ (resp. $\cu^{t\le 0}[-l]$).

\end{defi}

\begin{rema}\label{rts}
1. Recall 
that (\ref{tdec})
defines additive functors $\cu\to \cu^{t\le 0}:M\mapsto A$ and $C\to
\cu^{t\ge 1}:M\mapsto B$. We will denote $A,B$ by $M^{\tau\le 0}$ and
$M^{\tau\ge 1}[-1]$, respectively. (\ref{tdec}) will be called the
{\it $t$-decomposition} of $M$.

More generally, the $t$-components of $M[i]$ (for any $i\in \z$) will be denoted by
$M^{\tau\le i}\in
\cu^{t\le 0}$ and $M^{\tau\ge i+1}[-1]\in \cu^{t\ge 1}$, respectively.

$\tau_{\le i} M$ will denote $M^{\tau\le i}[-i]$; $\tau_{\ge i} M$ will denote $M^{\tau\ge i}[-i]$.

2. 
The functor $M\mapsto \tau_{\ge 0} M$ is  left adjoint to the inclusion $\cu^{t\ge 0}\to \cu$.

3. We will also need the following easy (and well-known) properties of $t$-structures.

 The first one is  Proposition 1.3.17(iii) of \cite{bbd}: if a 
 functor $F$ is left adjoint to $G$, and their targets are endowed with $t$-structures, then $F$ is right $t$-exact if and only if $G$ is left $t$-exact.  The latter assertions mean that $F$   respects '$t$-negative' objects, whereas $G$ respects $t$-positive ones. 

The second property is: if for two $t$-structures $t$ and $t'$ on a triangulated $\cu$ the identity functor is $t$-exact (for the pairs $(\cu,t)$ and $(\cu,t')$, i.e. $\cu^{t\le 0}\subset \cu^{t'\le 0}$ and $\cu^{t\ge 0}\subset \cu^{t'\ge 0}$), then $t=t'$. Indeed, the previous statement yields that the identity is also $t$-exact as a functor from $(\cu,t')$ to $(\cu,t)$.

4. 
We denote by $H^t_{0}$ the zeroth homology functor corresponding to
$t$. 
Shifting the $t$-decomposition of $M^{\tau\le 0}[-1]$ by
$[1]$ we obtain  a canonical and functorial (with respect to $M$)
distinguished triangle $\tau_{\le -1} M \to \tau_{\le 0} M\to H^t_{0}(M)$; we denote $H^t_{0}(M[i])$ by $H^t_{i}(M)$.

If a $t$-structure is non-degenerate (i.e. if $\cup_{i\in\z}\cu^{t\le i}=\cup_{i\in\z}\cu^{t\ge i}=\ns$; note that this is certainly the case for bounded $t$-structures) then the collection of $H^t_{i}$ is conservative. Moreover, in this case for $C\in \obj \cu$ we have: $C\in \cu^{t\le 0}$ (resp. $C\in \cu^{t\ge 0}$) whenever $H^i(C)=0$ for all $i>0$ (resp. for all $i<0$).

\end{rema}

\subsection{On weight filtrations and (degenerating) weight spectral sequences}\label{swss}

Now we recall certain properties of weight filtrations and weight spectral sequences. Most of them were established in \S2 of \cite{bws}, whereas the degeneration of weight spectral sequences was studied in \S3 of \cite{btrans}.

Let $\au$ be an abelian category. In \S2 
of \cite{bws}
for $H:\cu\to \au$ that is either cohomological or homological (i.e. it is either covariant or contravariant, and converts distinguished triangles into long exact sequences) certain {\it weight filtrations} and {\it weight spectral sequences}  (corresponding to $w$) were introduced. Below we will be more interested in
the homological functor case; certainly, one can pass to cohomology 
by a simple reversion of arrows (cf. \S2.4 of ibid.).

\begin{defi}\label{dwfil}
Let $H:\cu\to \au$ be a 
covariant functor, $i\in \z$.

1. We denote $H\circ [i]:\cu\to \au$ by $H_i$.

2. We choose some $w_{\le i}M$ and define  the {\it weight filtration} for $H$  by $W_iH:M\mapsto \imm (H(w_{\le i}M)\to H(M))$.

Recall that $W_iH$ is functorial in $M$ (in particular, it does not depend on the choice of  $w_{\le i}M$); see Proposition 2.1.2(1) of ibid.


\end{defi}

Now we recall some of the properties of weight spectral sequences; we are especially interested in the case when they degenerate. 

\begin{pr}\label{pwss}
 
I For a homological $H:\cu\to \au$ and any $M\in \obj \cu$  there exists a spectral sequence $T=T_w(H,M)$ with $E_1^{pq}(T)=
H_q(M^{p})$ for certain $M^m\in \cu_{w=0}$ (coming from certain weight decompositions as in (\ref{ewd})) that converges to 
$E_{\infty}^{p+q}=H_{p+q}(M)$.
 $T$ is  
$\cu$-functorial  in $M$ and in $H$ (with respect to  composition of $H$ with exact functors of abelian categories) 
starting from $E_2$. 
Besides, the step of filtration given by ($E_{\infty}^{l,m-l}:$ $l\ge k$)
 on $H_{m}(M)$ (for some $k,m\in \z$) equals  $(W_{-k}H_{m})(M)$.
Moreover, $T(H,M)$ comes from an exact couple with $D_1^{pq}=H_{p+q}(w_{\le - p}M)$ (here one can fix any choice of $w_{\le -p}M$). 

We will say that $T$ {\it degenerates at $E_2$} (for a fixed $H$) if $T_w(H,M)$ does so for any $M\in \obj \cu$.

II Suppose that $T$ degenerates at $E_2$ (as above), $i\in \z$. Then the following statements are fulfilled.

1. The functors $W_iH$ and $W_i'H:M\mapsto H(M)/W_{i-1}H(M)$ are homological.

2. For any $f\in \cu(M,Y)$ the morphism $H(f)$ is strictly compatible with the filtration of $H$ by $W_i$ i.e. $W_{i}H(M)$ surjects onto $W_{i}H(Y)\cap \imm H(f)$.   



\end{pr}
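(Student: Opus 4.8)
The plan is to derive everything from the degeneration at $E_2$ together with the structural description of the spectral sequence $T=T_w(H,X)$ given in Part I, in particular the identification of the filtration steps with the $W_{-k}H_m$ and the exact couple description with $D_1^{pq}=H_{p+q}(w_{\le -p}X)$. Throughout, fix $i\in\z$ and set (after reindexing) $H'=W_{-i}H$, so that $W_iH$ in the new notation is the ''bottom'' part of the filtration and $W_i'H$ the corresponding quotient; it suffices to treat one of them, say $W_iH$, since then $W_i'H$ fits into a short exact sequence of functors $0\to W_{i-1}H\to H\to W_i'H\to 0$ (wait --- more carefully, $0\to W_iH\to H\to H/W_iH\to 0$), and a functor that is a cokernel of an injection of homological functors, with the middle one homological, need only be checked against the long exact sequence. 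So the core of the argument is assertion 1 for $W_iH$.

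First I would record the key consequence of $E_2$-degeneration: for each $X$ and each $m$, the associated graded of the weight filtration on $H_m(X)$ is $\mathrm{gr}^W_{-p}H_m(X)=E_2^{p,m-p}(T_w(H,X))$, and degeneration means $E_2=E_\infty$, so in particular $E_2^{pq}=\ker(d_1)/\operatorname{im}(d_1)$ at the $E_1^{pq}=H_q(X^p)$ spot, and there are no higher differentials. From the exact couple, $W_{-p}H_m(X)=\operatorname{im}(H_m(w_{\le p}X)\to H_m(X))$, and degeneration forces the maps $H_{m}(w_{\le p-1}X)\to H_m(w_{\le p}X)$ to have predictable image, so that the short exact sequences $0\to W_{-p-1}H_m(X)\to W_{-p}H_m(X)\to E_2^{p,m-p}(X)\to 0$ hold and are functorial starting from $E_2$ (which is exactly the functoriality claimed in Part I). To prove $W_iH$ is homological, I would take a distinguished triangle $X\xrightarrow{f} Y\xrightarrow{g} Z\to X[1]$ and chase the morphism of (functorial, from $E_2$ on) spectral sequences; degeneration makes the $E_2=E_\infty$-pages into the associated graded of the filtered long exact sequences for $H_*(X),H_*(Y),H_*(Z)$, and a standard diagram chase (the ''five lemma on associated gradeds'' / filtered exactness argument) upgrades exactness on gradeds plus exactness of $H_*$ itself to exactness of the sub-object functors $W_iH_*$ --- i.e. $W_iH(X)\to W_iH(Y)\to W_iH(Z)$ is exact in the middle, and likewise one step further. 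This is essentially the content of Proposition 2.1.2 and its refinement in \cite{bws}, \S3 of \cite{btrans}; I would cite those for the precise bookkeeping.

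For assertion 2, strictness, the input is again that $W_{-k}H_m(X)$ is precisely the $k$-th filtration step coming from the convergent degenerate spectral sequence, equivalently $\operatorname{im}(H_m(w_{\le -k}X)\to H_m(X))$. Given $f\in\cu(X,Y)$, choose compatible weight decompositions so that $f$ is covered by a morphism $w_{\le i}X\to w_{\le i}Y$ (possible up to the usual non-canonicity, using that the cone of $w_{\le i}X\to X$ lies in $\cu_{w\ge i+1}$ and orthogonality kills the relevant Hom, cf. Remark 2 after Definition \ref{dwstr} and the standard lifting of morphisms along weight decompositions). Then $W_iH(f)$ is literally induced by $H(w_{\le i}X)\to H(w_{\le i}Y)$, and an element of $W_iH(Y)\cap\operatorname{im}H(f)$ lifts to $H(w_{\le i}Y)$ on one hand and comes from $H(X)$ on the other; degeneration (concretely: injectivity of $W_iH(Y)/W_{i-1}H(Y)\hookrightarrow E_2$ and the functoriality of $T$ from $E_2$) lets one adjust the lift so that it comes from $H(w_{\le i}X)=$ ... hmm, rather: it comes from $W_iH(X)$. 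Formally this is the statement that a morphism of spectral sequences that is an isomorphism onto its image on $E_2=E_\infty$ is strictly compatible with the induced filtrations --- the classical ''strictness follows from degeneration'' lemma.

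The main obstacle I anticipate is \emph{not} any single deep step but the careful handling of the non-canonicity of weight decompositions and of the fact that $T_w(H,X)$ is only functorial from $E_2$ onward: one must phrase every diagram chase at the level of $E_2=E_\infty$ and of the \emph{filtrations} $W_\bullet H_m(X)$ (which \emph{are} canonical, by Proposition 2.1.2(1) / Definition \ref{dwfil}), never at the level of the chosen $X^p$ or $w_{\le i}X$. The cleanest route is probably to avoid spectral-sequence bookkeeping where possible and instead argue directly: (i) homology of $W_iH$ from the octahedron relating $w_{\le i}X,w_{\le i}Y,w_{\le i}Z$ for a triangle $X\to Y\to Z$, noting that $w_{\le i}$ of a triangle need not be a triangle, which is exactly where degeneration is needed to repair exactness; (ii) strictness as above. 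I would present (i) via the spectral sequence comparison as it is the shortest, and flag that the only genuinely new ingredient beyond \cite{bws} is the degeneration hypothesis, which is what converts ''$W_iH$ is left exact / the filtration behaves subadditively'' into full exactness and strictness.
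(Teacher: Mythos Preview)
The paper itself does not give an independent argument here: its entire proof is the sentence ``Immediate from Proposition 3.1.2 of \cite{btrans}.'' So there is no detailed proof in the paper to compare against; you are reconstructing what that cited result does.

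Your sketch has the right shape, but one step is doing more work than you acknowledge. Your argument for II.1 runs: degeneration identifies $\operatorname{gr}^W_k H_m$ with $E_2^{-k,m+k}$; then ``exactness on gradeds plus exactness of $H_*$'' upgrades to exactness of $W_iH_*$ by the filtered five-lemma. The filtered five-lemma step is fine (short exact sequences $0\to W_{i-1}\to W_i\to\operatorname{gr}_i\to 0$ of complexes, induction on $i$). The point you glide over is \emph{why the graded sequence is exact on triangles}, i.e.\ why $X\mapsto E_2^{pq}(T_w(H,X))$ is homological. Degeneration by itself does not give this: it only says $E_2=E_\infty$ for each object separately. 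What you need is that the weight complex functor $t\colon\cu\to K^b(\hw)$ (or its weak version) sends distinguished triangles to distinguished triangles, so that applying $H_q$ termwise and then taking homology of the resulting complex yields a homological functor. This is a genuine theorem from \cite{bws} (it underlies the $E_2$-functoriality in Part I), and you should name it rather than bury it under ``precise bookkeeping''.

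On the logical order: your instinct to do II.2 first and deduce II.1 runs into a circularity, as you half-noticed --- proving strictness of a single $f\colon X\to Y$ via the cone $Z$ requires exactness of $W_iH(X)\to W_iH(Y)\to W_iH(Z)$, which is II.1. The clean route is the reverse: prove II.1 as above (weight complex $\Rightarrow$ gradeds homological; degeneration $\Rightarrow$ gradeds $=E_2$; filtered five-lemma $\Rightarrow$ $W_iH$ homological), and then II.2 is a one-line consequence: given $y\in W_iH(Y)\cap\operatorname{im}H(f)$, its image in $H(Z)$ vanishes, hence its image in $W_iH(Z)$ vanishes, hence by II.1 it lifts to $W_iH(X)$. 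Your direct lifting attempt via chosen $w_{\le i}X\to w_{\le i}Y$ is, as you say, obstructed by non-canonicity and is best abandoned.
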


\begin{proof}

Immediate from Proposition 3.1.2 of \cite{btrans} (recall that $w$ is bounded by our convention).
 
\end{proof}

\begin{rema}\label{rwss}

The description of the exact couple for $T_w(H,M)$ (that can be found in loc. cit.) also easily yields the following results.

1. If $H'=H\circ [d]$ then $T_w(H',M)$ for any $M\in \obj \cu$ can be obtained by a 'shift' from $T_w(H,M)$ i.e. one can take $E_r^{p,q}T_w(H',M)=E_r^{p,q+d}T_w(H, M)$ for all $r\ge 1,\ p,q\in\z$, and the differentials behave in the similar way.  Certainly, this yields a functorial description of $T_w(H',M)$ starting from $E_2$.

2. The functoriality of $T_w(H,M)$ described in assertion I of the proposition can be easily generalized as follows.
Let \begin{equation}\label{edeg} \begin{CD}
\cu@>{F}>>\cu'\\
@VV{H}V@VV{H'}V \\
\au@>{G}>>\au'
\end{CD}\end{equation}
be a commutative square of functors, where $\cu'$ is a triangulated category endowed with a weight structure $w'$ such that $F$ is a weight-exact (and exact) functor, $G$ is an exact functor of abelian categories. Then for any $M\in \obj \cu$ one can obtain $T_{w'}(H',F(M))$  by applying $G$ (termwise) to  $T_{w}(H, M)$. Again, this is a functorial isomorphism starting from $E_2$.

3. Hence in this setting for any $d\in \z$ we obtain: if the spectral sequence $T_{w}(H,M)$ degenerates at $E_2$, then $T_{w'}(H'\circ [d], F(M))$ also does. The converse statement is also true if $G$ is conservative.

This is a certain generalization of Proposition \ref{pdeg}(II) below.

\end{rema}


Now we introduce the notion of a weight filtration for an abelian category following Definition D.7.2 of \cite{bvk}.  

\begin{defi}\label{dwfilt}

For an abelian $\au$, we will say that an increasing family of full subcategories $\au_{\le i}\subset \au$, $i\in \z$, yields a {\it weight filtration} for $\au$ if $\cap_{i\in \z}\au_{\le i}=\ns$, $\cup_{i\in \z}\au_{\le i}=\au$, and there exist exact right 
adjoints $W_{\le i}$ to the embeddings  $\au_{\le i}\to \au$.
\end{defi}

We will need the following statement.

\begin{lem}\label{lwfil}
 
 Let  $\au_{\le m}$, $m\in \z$, yield a weight filtration for $\au$. 
 Then the following statements are valid.
 
1. $\au_{\le m}$ are  exact abelian subcategories of $\au$. 

2. All $W_{\le m}$ are idempotent endofunctors.

3. The adjunctions yield functorial embeddings of $W_{\le m}M\to M$ such that $W_{\le m-1}M\subset W_{\le m}M$ for all $m\in \z$, and
the functors $W_{\ge m}:M\mapsto M/W_{\le m-1}M$ are exact also. 

4. The
 categories $\aum$ being the 'kernels' of the restriction of $W_{\le m-1}$ to $\au_{\le m}$, are abelian, and $\aum\perp \au_j$ for any $j\neq m$. 
   
\end{lem}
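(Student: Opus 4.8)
The plan is to verify the four assertions in order, leveraging the adjunction properties of the $W_{\le i}$ together with standard facts about reflective/coreflective abelian subcategories. First I would establish assertion 1: since $W_{\le i}$ is an exact right adjoint to the inclusion $\au_{\le i}\hookrightarrow\au$, the inclusion preserves all colimits (in particular cokernels) it can, and the right adjoint being exact means $\au_{\le i}$ is closed under kernels, cokernels and extensions computed in $\au$. More precisely, for a morphism $f$ between objects of $\au_{\le i}$, exactness of $W_{\le i}$ together with the fact that $W_{\le i}$ is (as in assertion 2) idempotent with counit an isomorphism on $\au_{\le i}$ forces $\ke f$, $\cok f$, $\imm f$ to lie in $\au_{\le i}$; hence $\au_{\le i}$ is an exact abelian subcategory.

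For assertion 2, idempotence of $W_{\le m}$ is formal: for any reflective/coreflective subcategory the composite of the (co)reflector with the inclusion, applied twice, agrees with applying it once, because the counit $W_{\le m}X\to X$ becomes an isomorphism after applying $W_{\le m}$ again (the counit on an object already in $\au_{\le m}$ is an isomorphism by the triangle identities). For assertion 3, the counit $W_{\le m}X\to X$ is a monomorphism: this uses that $W_{\le m}$ is exact (so preserves monos and epis) and that the family is a genuine filtration — one checks $\ke(W_{\le m}X\to X)$ maps to zero under $W_{\le m}$, hence is zero since the counit on it is again iso; I would also record that $W_{\le m-1}X\hookrightarrow W_{\le m}X$ compatibly, which follows by applying $W_{\le m-1}$ to $W_{\le m}X\to X$ and using idempotence. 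Exactness of $W_{\ge m}:X\mapsto X/W_{\le m-1}X$ then follows from exactness of $W_{\le m-1}$ and the snake lemma applied to short exact sequences, since quotient by an exact subfunctor is exact.

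For assertion 4, the category $\aum=\ke(W_{\le m-1}|_{\au_{\le m}})$ is abelian because it is the kernel of an exact functor between abelian categories (the restriction of the exact functor $W_{\le m-1}$ to the abelian subcategory $\au_{\le m}$), hence closed under kernels, cokernels and extensions. The orthogonality $\aum\perp\au_j$ for $j\neq m$ I would split into two cases. For $j<m$: if $X\in\aum$ and $Y\in\au_j\subset\au_{\le m-1}$, then $\au(X,Y)=\au(X,W_{\le m-1}Y)$ — but actually the cleaner route is $\au(X,Y)\cong\au_{\le m-1}(W_{\le m-1}X,Y)=\au_{\le m-1}(0,Y)=\ns$ using adjunction and $W_{\le m-1}X=0$. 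For $j>m$: here I would use the hypothesized exact left adjoints — wait, the definition only gives right adjoints $W_{\le i}$; so instead I argue $\au(Y,X)$ for $Y\in\au_j$, $X\in\aum\subset\au_{\le m}$, giving $\au(Y,X)\cong\au(W_{\le m}Y\to\ ?\ )$; more carefully, for $Y\in\au_j$ with $j>m$ and $X\in\au_{\le m}$ one has the counit sequence $W_{\le m}Y\hookrightarrow Y\twoheadrightarrow W_{\ge m+1}Y$ and any map $Y\to X$ kills $W_{\ge m+1}Y$'s complement issue — the point being $\au(W_{\ge m+1}Y, X)=0$ since $X\in\au_{\le m}$ and $W_{\ge m+1}Y$ has no sub or quotient in $\au_{\le m}$; combined with $W_{\le m}Y\in\au_{\le m-1}$ when... hmm. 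The main obstacle I expect is precisely this $j>m$ direction of the orthogonality: with only right adjoints available one must extract the needed vanishing $\au(\au_j,\au_{\le m})=\ns$ for $j>m$ from the structure, and I would derive it by showing that for $Y\in\au_j$ the object $Y$ has no nonzero subobject in $\au_{\le m}$ (since such a subobject $X'\hookrightarrow Y$ would satisfy $X'=W_{\le m}X'$, and applying exactness of $W_{\le m}$ to $X'\hookrightarrow Y$ gives $X'\hookrightarrow W_{\le m}Y=0$ because $W_{\le m}Y=0$ for $Y\in\au_j$, $j>m$ — this last equality being exactly the statement that $\au_j$ sits in the "graded piece $m=j$" and is orthogonal to $\au_{\le m}$ on that side, which in turn follows from $\au_j\cap\au_{\le m}=\ns$ for $j>m$ together with idempotence). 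I would then dualize the argument — or rather, use the surjectivity of $Y\twoheadrightarrow W_{\ge m+1}Y$ and that this quotient's any map to $X\in\au_{\le m}$ vanishes — to conclude $\au(Y,X)=\ns$, finishing the proof.
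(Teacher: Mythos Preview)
The paper does not give a proof; it simply cites Lemma 2.1.2 of \cite{btrans}. Your direct argument is therefore a different route by default. Your treatment of assertions 1--3 is essentially sound (the counit being an isomorphism on the coreflective subcategory is what drives everything, and you use it correctly).

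The genuine gap is in assertion 4. You have the adjunction direction reversed. By Definition~\ref{dwfilt}, each $W_{\le i}$ is a \emph{right} adjoint to the inclusion $\iota_i:\au_{\le i}\hookrightarrow\au$, so the adjunction reads $\au(\iota_i A,B)\cong\au_{\le i}(A,W_{\le i}B)$ for $A\in\obj\au_{\le i}$. Your claimed isomorphism $\au(X,Y)\cong\au_{\le m-1}(W_{\le m-1}X,Y)$ in the case $j<m$ would require $W_{\le m-1}$ to be a left adjoint, which it is not. And in your $j>m$ discussion you drift into computing $\au(Y,X)$ for $Y\in\au_j$, $X\in\aum$; but $\aum\perp\au_j$ asks for $\au(X,Y)=0$, so you are arguing the wrong orthogonality.

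In fact the two cases swap roles relative to what you expected. The case $j>m$ is the one that falls to adjunction directly: since $X\in\aum\subset\au_{\le m}\subset\au_{\le j-1}$ and $W_{\le j-1}Y=0$ by definition of $\au_j$, one gets
\[
\au(X,Y)\cong\au_{\le j-1}(X,W_{\le j-1}Y)=\au_{\le j-1}(X,0)=\ns.
\]
The case $j<m$ genuinely requires exactness rather than adjunction: given $f:X\to Y$ with $Y\in\au_{\le j}\subset\au_{\le m-1}$, the image $\imm f\subset Y$ lies in $\au_{\le m-1}$ by assertion 1, and applying the exact functor $W_{\le m-1}$ to the epimorphism $X\twoheadrightarrow\imm f$ yields $0=W_{\le m-1}X\twoheadrightarrow W_{\le m-1}(\imm f)\cong\imm f$, hence $f=0$. (You essentially found this argument in your $j>m$ paragraph, but attached it to the wrong case and to the wrong hom-set.)
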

\begin{proof}
This is (a part of) Lemma 2.1.2  of \cite{btrans}.


\end{proof}

Now we fix certain (bounded) $w$ and $t$ for $\cu$, and study a  condition  ensuring that $w$ induces a weight filtration for $\hrt$.


\begin{pr}\label{pdeg}
Let $H=H^t_0$. 

I Suppose that the corresponding $T$ degenerates. Then 
the functors $W_iH:\cu\to \hrt$ are homological. 
 The restrictions $W_{\le i}$ of $W_iH$ to $\hrt$ define a weight filtration for this category. Besides, $W_iH 
 =W_{\le i}\circ H$. 

II Let $\bu$ be an abelian category; let $F:\hrt\to \bu$ be an exact functor.

1. Suppose that  $T$ degenerates. Then   $T_w(F\circ H,-)$ also does.

2. Conversely,  suppose that $F$ is conservative and that $T_w(F\circ H,-)$ degenerates.  Then $T$ degenerates. 

Moreover, for $M\in \cu^{t=0}$ we have: $W_{\le i}M=M$ (resp. $W_{\le i}M=0$) if and only if $W_i(F\circ H)(M)=F(M)$ (resp. $W_i(F\circ H)(M)=0$).

\end{pr}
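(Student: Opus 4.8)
The plan is to treat Parts I and II separately, with Part I being essentially a packaging of the already-established facts about degenerating weight spectral sequences, and Part II the new input.

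For Part I, I would start from Proposition \ref{pwss}(II): since $T = T_w(H,-)$ degenerates, the functors $W_iH$ and $W_i'H : X \mapsto H(X)/W_{i-1}H(X)$ are homological $\cu \to \hrt$. The next step is to check that the restrictions $W_{\le i}$ of $W_iH$ to $\hrt$ fulfill Definition \ref{dwfilt}. The conditions $\cap_i \hrt_{\le i} = \ns$ and $\cup_i \hrt_{\le i} = \hrt$ should follow from boundedness of $w$ together with the compatibility of the weight filtration steps with the convergence of $T$ (the step-of-filtration statement in Proposition \ref{pwss}(I): the $W_{-k}H_m$ appear as the limit filtration, so for $X \in \cu^{t=0}$ one has $W_iH(X) = H(X)$ for $i \gg 0$ and $= 0$ for $i \ll 0$ by boundedness). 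For the existence of exact right adjoints $W_{\le i}$ to $\hrt_{\le i} \hookrightarrow \hrt$: the functor $X \mapsto W_iH(X)$ restricted to $\hrt$ is the candidate; one checks it lands in $\hrt_{\le i}$, is right adjoint to the inclusion (using the functorial surjection $W_iH(X) \hookrightarrow H(X) = X$ from Definition \ref{dwfil} and the orthogonality of weights), and is exact because both $W_iH$ and $W_i'H$ are homological on $\cu$ and every short exact sequence in $\hrt$ lifts to a distinguished triangle. The identity $W_iH = W_{\le i} \circ H$ is then immediate since $H = H^t_0$ is the identity on $\hrt$ and $W_iH$ factors through it by functoriality.

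For Part II.1, the key point is that if $F : \hrt \to \bu$ is exact, then $F \circ H : \cu \to \bu$ is homological, and its weight spectral sequence $T_w(F \circ H, X)$ is obtained from $T_w(H,X)$ by applying $F$ — this is exactly the functoriality of $T$ in $H$ with respect to composition with exact functors of abelian categories, asserted in Proposition \ref{pwss}(I). An exact functor takes a spectral sequence degenerating at $E_2$ to one degenerating at $E_2$ (it commutes with taking homology of the $E_1$-complexes), so $T_w(F \circ H,-)$ degenerates. For Part II.2, the converse, I would argue contrapositively: degeneration at $E_2$ of $T_w(H,X)$ is equivalent to the vanishing of all differentials $d_r$ for $r \ge 2$; since $T_w(F \circ H, X) = F(T_w(H,X))$ and $F$ is conservative, $F(d_r) = 0$ for all $r \ge 2$ forces $d_r = 0$ for all $r \ge 2$ (here one uses that the $E_2$-terms are subquotients of $H_q(X^p)$ with $X^p \in \cu_{w=0} \subset \cu^{t=0}$, so $F$ applied to the relevant maps detects whether they vanish — conservativity on $\hrt$ suffices). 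Finally, the last sentence: for $X \in \cu^{t=0}$, $W_{\le i}X = W_iH(X)$ is a subobject of $X$ in $\hrt$, and applying the exact conservative $F$ gives $F(W_{\le i}X) = W_i(F \circ H)(X)$ as a subobject of $F(X)$; hence $F(W_{\le i}X) = F(X)$ forces $W_{\le i}X = X$ and $F(W_{\le i}X) = 0$ forces $W_{\le i}X = 0$.

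The main obstacle I anticipate is Part II.2: one must be careful that conservativity of $F$ on the \emph{heart} $\hrt$ is enough to detect degeneration of a spectral sequence whose entries are $F$-images of homology objects, i.e. that $F$ kills a differential only if the differential was already zero. This requires that the $E_r$-pages for $r \ge 2$ (not just $E_1$) are still built from objects of $\hrt$ and that the differentials are morphisms in $\hrt$, so that $F$ may be applied term by term; this should follow from the description of $T$ via the exact couple with $D_1^{pq} = H_{p+q}(w_{\le -p}X)$ together with the fact that $H = H^t_0$ is valued in $\hrt$, but it is the step where the bookkeeping must be done honestly. The remaining verifications (adjunction, exactness, the $\cap/\cup$ conditions) are routine once boundedness of $w$ and the step-of-filtration statement from Proposition \ref{pwss} are invoked.
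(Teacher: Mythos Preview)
Your sketch is essentially sound, but note that the paper does not prove this proposition at all: it simply cites Proposition~3.2.1 of \cite{btrans}. So you are supplying an argument where the paper outsources one, and your outline is close in spirit to what that reference contains.

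Two points deserve more care than you give them. First, the identity $W_iH = W_{\le i}\circ H$ is not ``immediate'': you must show that for arbitrary $X\in\obj\cu$ the subobject $W_iH(X)\subset H^t_0(X)$ coincides with $W_iH(H^t_0(X))$, i.e.\ that $W_iH$ factors through $H^t_0$. This is not purely formal from functoriality of the weight filtration; it uses that (once the adjunction is established) $W_{\le i}(H^t_0(X))$ is the \emph{largest} subobject of $H^t_0(X)$ lying in $\hrt_{\le i}$, together with the fact that $W_iH(X)$ also has this maximality property --- which in turn rests on degeneration. Your phrase ``factors through it by functoriality'' hides exactly this step.

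Second, in Part~II.2 you write that ``$F(d_r)=0$ forces $d_r=0$'' by conservativity. Conservativity of $F$ alone means only that $F$ reflects isomorphisms (equivalently, for exact $F$, that $F(Y)=0\Rightarrow Y=0$). What you need is \emph{faithfulness}, and you should state explicitly the standard fact that an exact conservative functor between abelian categories is faithful (since $F(f)=0$ gives $F(\imm f)=\imm F(f)=0$, hence $\imm f=0$). With that in hand your argument goes through: the spectral sequence $T_w(H,X)$ lives entirely in $\hrt$ from $E_1$ on, applying the exact $F$ term-by-term yields $T_w(F\circ H,X)$, and faithfulness transfers degeneration back. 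The final clause about $W_{\le i}X=X$ or $0$ is correct as you wrote it.
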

\begin{proof}

 This is 
 (a part of) Proposition 3.2.1 
 of \cite{btrans}.%

\end{proof}

\subsection{On transversal weight and $t$-structures}\label{strans}

Let $t$ be a $t$-structure for $\cu$, and $w$ be a weight structure for it. 

\begin{defi}\label{dnwd}
1. For some $\cu,t,w$ we will say that a distinguished triangle (\ref{ewd}) (for some $m,M$) is {\it nice} if 
$w_{\le m}M,M,w_{\ge m+1}M\in \cu^{t=0}$.

We will also say that this distinguished triangle is a {\it nice decomposition} of $M$ (for the corresponding $m$).

2. Suppose (as we always do) that $t$ and $w$ are bounded.

We will say that $t$ and $w$ are transversal if 
a nice decomposition exists for any $m\in \z$ and any $M\in \cu^{t=0}$.


\end{defi}

\begin{pr}\label{ptrans}
I We fix some $\cu,w,t,m$; suppose that for a certain $N\subset \cu^{t=0}$ a nice decomposition  exists for any $M\in N$. Consider $N'\subset \cu^{t=0}$ being the smallest subclass containing $N$ that 
satisfies the following condition: if $A,C\in N'$, 
\begin{equation}\label{ecom}
A\stackrel{f}{\to}B\stackrel{g}{\to}C\end{equation}
is a complex (i.e. $g\circ f=0$), $f$ is monomorphic, $g$ is epimorphic, $ \ke g/\imm f\in N'$, then $B\in N'$. Then a nice choice of (\ref{ewd}) exists for any $M\in N'$.

II If $t$ is transversal to $w$, then the following statements are fulfilled for any $i\in\z$, $M\in \cu^{t=0}$, $Y\in \obj\cu$. 

1. For any $H$ that could be presented as $F\circ H^t_m$, where $F:\hrt\to \au$ is an exact 
functor, 
$T_w(H,-)$ degenerates at $E_2$.

2. Nice decompositions  exist and are $\hrt$-functorial in $M$ (for a fixed $m$). The corresponding  functor  $W_{\le m}:M\mapsto w_{\le m} M$  can be described as (the restriction to $\hrt$ of) $W_{m}H^t_0$ (see Definition \ref{dwfil}(2));
i.e. it coincides with the functor $W_{\le m}$ given by Proposition \ref{pdeg}(I).

3. The category $\au_{m}=\cu^{t=0}\cap \cu_{w= m}$ is (abelian) semisimple; 
there is a splitting $\cu_{w=0}=\bigoplus_{m\in\z} \obj \au_m[-m]$ given by $Y\mapsto \bigoplus_{i\in \z}H_i^t(Y)[-i]$.

4. $W_{\le m}M$ yield an increasing filtration for $M$ whose $m$-th factor belongs to $\au_m$. Moreover, this filtration is uniquely and functorially determined by this condition. 


5. $Y\in \cu_{w\le m}$ (resp. $Y\in \cu_{w\ge m}$) if and only if for
 any $j\in \z$ we have $W_{\le m+j} (H^t_j(Y))=H^t_j(Y)$  (resp. $W_{\le m+j-1} (H^t_j(Y))=0$; note that this is equivalent to $W_{\ge m+j} (H^t_j(Y))=H^t_j(Y)$).

III For any $t,w$, $m\in\z$, 
and any choice of  
$w_{\le m}M$ (and a morphism $w_{\le m}M\to M$ corresponding to a weight decomposition) consider the morphism $f_m(M):(W_m H_0^t)(M)\to M$ (cf. 
Definition \ref{dwfil}(2)).
Then $t$ is transversal to $w$ if and only if this morphism extends to a nice decomposition (for any $M,m$).

IV For a family of semisimple (abelian)  $\{\au_m \subset \cu,\ m\in\z \}$, 
suppose that $\lan\cup_{m\in \z} \obj \au_{m}\ra=\cu$, and $\au_m\perp \au_j[s]$ for any $m,j,s\in \z$ such that: either $s<0$, or $s>m-j$, or
 $s=0$ and   $m> j$.  

Then there exist transversal $w$ and $t$ such that $\cu_{w=0}=\bigoplus_m\obj \au_m[-m]$, and $\hrt$ is the smallest extension-stable subcategory of $\cu$ containing $\cup \au_m$.

V For any $t,w$, $t$ is transversal to $w$ if and only if there exists a family of semisimple  abelian $\aum\subset \hrt$ ($m\in \z$) such that: 
$\obj \aum\cap \obj \au_j=\ns$ for all $j\neq m$, $j\in \z$, 
 and $\cu_{w=0}=\bigoplus_m\obj \aum[-m]$. 

\end{pr}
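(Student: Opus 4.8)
The plan is to reduce assertion V to assertion IV. We are given a family of semi-simple subcategories $\aum\subset \hrt$, $m\in\z$, with $\obj\aum\cap\obj\au_j=\ns$ for $j\neq m$ and $\cu_{w=0}=\bigoplus_m\obj\aum[-m]$; we want to show $t$ is transversal to $w$. The idea is that these data essentially already ``are'' the data of IV, so we should check that the $\aum$ satisfy the orthogonality hypothesis of IV and that the two structures produced by IV coincide with the given $t$ and $w$. First I would record that since $(\cu,w)$ is bounded and $\cu_{w=0}=\bigoplus_m\obj\aum[-m]$, Proposition \ref{pbw}(\ref{ipost}) gives that $\cu_{w\le 0}$ (resp.\ $\cu_{w\ge 0}$) is the smallest extension-stable subclass containing $\cup_{i\le 0}\aum[-m]$ over the relevant indices; in particular $\lan\cup_m\obj\aum\ra=\cu$. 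Likewise, since $\aum\subset\hrt=\cu^{t=0}$, each $\obj\aum[-m]\subset\cu^{t=m}$.

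Next I would verify the orthogonality condition of IV for the family $\{\aum\}$: for $A\in\aum$, $C\in\au_j$, and $s\in\z$ we must have $\cu(A,C[s])=\ns$ when $s<0$, or $s>m-j$, or ($s=0$ and $m>j$). Rewrite $\cu(A,C[s])=\cu(A[-m],C[s-m])$ with $A[-m]\in\cu_{w=m}$ and $C[-j]\in\cu_{w=j}$, i.e.\ $C[s-m]\in\cu_{w=j+s-m}[?]$ — more precisely $C[s-m]$ has ``weight'' $j$ shifted by $s-m$. The cases $s<0$ and $s>m-j$ then follow from the weight-structure orthogonality $\cu_{w\le 0}\perp\cu_{w\ge 0}[1]$ (Definition \ref{dwstr}(I)(iii)) applied after the appropriate shift: $A[-m]\in\cu_{w\ge m}\cap\cu_{w\le m}$ and $C[s-m]\in\cu_{w\le j+m-s}$ has weights $\le j+m-s<m$ when $s>m-j$, forcing the Hom to vanish; symmetrically for $s<0$ one uses that $C[s-m]\in\cu_{w\ge \text{something}>m}$. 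For the remaining case $s=0$, $m>j$: here $A,C\in\hrt$, so $\cu(A,C)=\hrt(A,C)$, and I would argue this vanishes using that $A\in\cu_{w=m}$, $C\in\cu_{w=j}$ with $m>j$, together with orthogonality in $\hw$ — an element of $\cu_{w=m}$ maps to $\cu_{w\le m-1}$ trivially is the wrong direction, so one instead uses $\cu_{w\ge m}\perp\cu_{w\le m-1}$... the cleanest route is to invoke Proposition \ref{pdeg} / the weight-filtration machinery, or simply note $C\in\cu_{w\le j}\subset\cu_{w\le m-1}$ and $A\in\cu_{w\ge m}$, whence $\cu(A,C)=\ns$ by Definition \ref{dwstr}(I)(iii) after shifting. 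Thus the hypotheses of IV are met.

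Then I would apply Proposition \ref{ptrans}(IV) to the family $\{\aum\}$ to obtain transversal weight and $t$-structures $w'$, $t'$ on $\cu$ with $\cu_{w'=0}=\bigoplus_m\obj\aum[-m]=\cu_{w=0}$ and with $\hrt[t']$ the smallest extension-stable subcategory containing $\cup\aum$. Since $w$ and $w'$ are both bounded weight structures with the same heart, Proposition \ref{pbw}(\ref{ipost}) (describing $\cu_{w\le 0}$, $\cu_{w\ge 0}$ in terms of the heart for bounded $w$) forces $\cu_{w\le 0}=\cu_{w'\le 0}$ and $\cu_{w\ge 0}=\cu_{w'\ge 0}$, i.e.\ $w=w'$ — or one may cite Proposition \ref{pbw}(\ref{iuni}) directly after checking one inclusion. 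For the $t$-structures: $\hrt[t']$ is the smallest extension-stable subcategory containing all $\aum$, and each $\aum\subset\cu^{t=0}$, which is extension-stable (see the Notation section); hence $\hrt[t']\subset\cu^{t=0}$. Conversely I would show $\cu^{t=0}\subset\hrt[t']$, e.g.\ by using boundedness of $t$ and the fact that every object of $\cu^{t=0}$ is built by extensions from objects of $\bigcup_m\aum$ — this is where the weight-filtration description from Proposition \ref{pdeg}(I), applied to $w'$ and $t'$, identifies the graded pieces of any $t'$-heart object as lying in the $\aum$, and symmetrically the $\au_m$ exhaust $\cu^{t=0}$ because $\cu_{w=0}=\bigoplus\obj\aum[-m]$ already ``sees'' all of them. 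With $\cu^{t=0}=\cu^{t'=0}$ and $\cu^{t\ge0},\cu^{t\le0}$ determined by the heart for bounded $t$ (Remark \ref{rts}(3), second property), we get $t=t'$, and transversality of $t'$ to $w'$ is transversality of $t$ to $w$.

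\textbf{Main obstacle.} The delicate point is the last paragraph: showing that the $t$-structure $t$ we started with agrees with the $t'$ produced by IV, rather than merely that they have the same heart-generators. The subtlety is that IV only pins down $\hrt[t']$ as an extension-closure, and one must check this equals $\cu^{t=0}$ on the nose; the containment $\hrt[t']\subseteq\cu^{t=0}$ is immediate from extension-stability of $\cu^{t=0}$, but the reverse containment genuinely uses the interaction of $w$ and $t$ through the hypothesis $\cu_{w=0}=\bigoplus_m\obj\aum[-m]$ — concretely, that boundedness of $w$ lets one resolve any object of $\cu^{t=0}$ by a weight complex with terms in $\bigcup\aum[-m]$, and then reassemble it by extensions inside $\cu^{t=0}$. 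Getting this reassembly argument precise (controlling the $t$-truncations of the weight-complex terms and their connecting maps) is the part that requires real care; everything else is a bookkeeping exercise in shifts and the orthogonality axioms.
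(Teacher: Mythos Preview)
Your overall strategy is exactly the paper's: verify the hypotheses of IV, obtain transversal $w',t'$, then identify $w'=w$ and $t'=t$. Two points, one a genuine error and one an overcomplication.

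\textbf{The orthogonality case $s=0$, $m>j$.} Your final claim ``$C\in\cu_{w\le m-1}$ and $A\in\cu_{w\ge m}$, whence $\cu(A,C)=\ns$'' is wrong: the weight-structure axiom (Definition \ref{dwstr}(I)(iii)) reads $\cu_{w\le 0}\perp\cu_{w\ge 0}[1]$, i.e.\ there are no maps \emph{from} low weight \emph{to} high weight. So from $A\in\cu_{w\ge m}$, $C\in\cu_{w\le m-1}$ you get $\cu(C,A)=\ns$, not $\cu(A,C)=\ns$. You noticed the direction problem mid-paragraph and then made the error anyway. The paper does \emph{not} use weight orthogonality here; it uses the semi-simplicity of the $\aum$ (together with $\obj\aum\cap\obj\au_j=\ns$) to conclude $\aum\perp\au_j$ for $m\neq j$. (Similarly, the case $s<0$ comes from the $t$-structure orthogonality $\cu^{t\le 0}\perp\cu^{t\ge 1}$, not from the weight axiom as you wrote; only the case $s>m-j$ is genuinely a weight-orthogonality statement.)

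\textbf{Your ``main obstacle'' is not one.} You do not need the reverse inclusion $\cu^{t=0}\subset\hrt'$. The paper argues as follows: IV gives $\hrt'$ as the smallest extension-stable subclass containing $\cup_m\aum$; since each $\aum\subset\cu^{t=0}$ and $\cu^{t=0}$ is extension-stable, one gets $\hrt'\subset\hrt$, hence (by boundedness of $t'$, using that $\cu^{t'\le 0}$ and $\cu^{t'\ge 0}$ are generated by shifts of $\hrt'$) the identity is $t$-exact from $(\cu,t')$ to $(\cu,t)$. Now Remark \ref{rts}(3) (second property) forces $t=t'$. No weight-complex reassembly argument is needed. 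Likewise for $w=w'$: from $\hw'=\hw$ and boundedness of $w'$, Proposition \ref{pbw}(\ref{ipost},\ref{iuni}) gives $w=w'$ directly, as you said.
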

\begin{proof}

I This is Lemma 1.1.3 of \cite{btrans}.

II 1. Immediate from Proposition 3.2.1(II,III.1) of ibid.

2. The functoriality of nice decompositions is the condition (iv') of Theorem 1.2.1 of ibid. (which is equivalent to condition (iv) of loc. cit. that we took above for the definition of transversality). The 
equality of two distinct descriptions of $W_{\le m}$ is given by Proposition 3.2.1(II) of ibid.

3. See Remark 1.2.3(2) of \cite{btrans}.

4. Immediate from condition (iii') of Theorem 1.2.1 of ibid. (cf. the proof of assertion II2). 

5. This is Proposition  1.2.4(I2) of ibid. 

III See Remark 1.2.3(4) of ibid. 

IV Theorem 1.2.1 of ibid. implies that such a family of $\aum$ does yield some transversal structures $t,w$. Besides, loc. cit. also allows to calculate $\hrt$, whereas  $\cu_{w=0}=\bigoplus_m\obj \aum[-m]$ by Remark
1.8(2) of ibid.

V Since $\lan\hw\ra=\cu$,
we obtain that $\lan\cup_{m\in \z} \obj \aum\ra=\cu$. 
Since $\aum$ are semisimple, we obtain that $\au_m\perp \au_j$ for any $m\neq j$.
The orthogonality axioms of weight and $t$-structures also yield the remaining orthogonality conditions that are needed in order to apply the previous assertion.   We obtain that certain transversal $t'$ and $w'$ exist; besides, $\hw'=\hw$ and $\hrt'\subset \hrt$. Since
 $w'$ is bounded, this implies $w=w'$ (see Proposition \ref{pbw}(\ref{iwfun},\ref{iuni})).  Since $t'$ is bounded also, we easily deduce that $t'=t$.


\end{proof}

\begin{rema}\label{rchk}
In the case of motives (of smooth projective varieties over a field) the splittings mentioned in assertions II3 and IV corresponds to the so-called {\it Chow-Kunneth} decompositions ('of the diagonal').
\end{rema}

\subsection{Some auxiliary 'gluing statements'}
\label{sglu}

Below we will apply several gluing arguments. We chose to gather the  definitions and auxiliary statements related with this matter here.

\begin{defi}\label{dglu}
1. The $9$-tuple $(\cu,\du,\eu, i_*, j^*, i^*, i^!, j_!, j_*)$  is called a
{\it gluing data} if it satisfies the following conditions.

(i) $\cu,\du,\eu$ are triangulated categories; $i_*:\du\to\cu$,
$j^*:\cu\to\eu$, $i^*:\cu\to\du$, $i^!:\cu\to\du$, $j_*:\eu\to\cu$,
$j_!:\eu\to\cu$ are exact functors.

(ii) $i^*$ (resp. $i^!$) is left (resp. right) adjoint to $i_*$;
$j_!$ (resp. $j_*$) is left (resp. right) adjoint to $j^*$.

(iii)  $i_*$ is a full embedding; $j^*$ is isomorphic to the
localization (functor) of $\cu$ by
$i_*(\du)$.

(iv) For any $M\in \obj \cu$ the pairs of morphisms $j_!j^*M \to M
\to i_*i^*M$ and $i_*i^!M \to M \to j_*j^*M$ can be completed to
distinguished triangles (here the connecting
morphisms come from the adjunctions of (ii)).


(v) $i^*j_!=0$; $i^!j_*=0$.

(vi) All of the adjunction transformations $i^*i_*\to \id_{\du}\to
      i^!i_*$ and $j^*j_*\to \id_{\eu}\to j^*j_!$ are isomorphisms of
      functors.

2. In the setting of part 1 of this definition, we will say that $M\in \obj \cu$ is a {\it lift} of an $Y\in \obj \eu$ if $j^*M\cong Y$. Similarly, for a lift of a distinguished triangle $C$ in $\eu$ is a distinguished triangle $C'$ in $\cu$ such that $j^*C'\cong C$.

3. In the setting of part 1, suppose that $\cu$ is endowed with a $t$-structure $t=t_{\cu}$. We define the {\it intermediate image} functor $j_{!*}:\eu\to \cu$ as $M\mapsto \imm(H_0^{t_{\cu}}j_!M\to H_0^{t_{\cu}}j_*M)$; here the morphism $j_!\to j_*$ comes from adjunction (and we use the fact that $j^*j_!\cong j^*j_*\cong 1_{\eu}$; cf. (1.4.6.2) and Definition 1.4.22 of \cite{bbd}). 

4. In the setting of part 1, suppose also that $\du$ and $\eu$ are endowed with  certain $t$-structures $t_{\du}$ and $t_{\eu}$, respectively. Then we will say that a $t$-structure $t=t_{\cu}$ for $\cu$ is {\it glued from}  $t_{\du}$ and $t_{\eu}$ if 
we have: $ \cu^{t_{\cu}\le 0}=\{M\in \obj \cu:  j^*M\in \eu^{t_{\eu}\le 0}\text{ and }  
i^*M\in \du^{t_{\du}\le 0}\}$, and $ \cu^{t_{\cu}\ge 0}=\{M\in \obj \cu:  j^*M\in \eu^{t_{\eu}\ge 0}\text{ and }  
i^!M\in \du^{t_{\du}\ge 0}\}$.  In this case we will also say that $\cu$, $\du$, and $\eu$ are {\it endowed with compatible $t$-structures}.


\end{defi}

\begin{rema}\label{rglu}
 Our definition of a gluing data is far from being the 'minimal' one. Actually, it is 
 well known (see Chapter 9 of \cite{neebook}) that
a gluing data can be uniquely recovered from an inclusion
$\du\to \cu$ of triangulated categories that admits both a left
and a right adjoint functor.

Our notation for the connecting functors 
is (certainly) coherent with Proposition \ref{pcisdeg}(\ref{iglu}) below.

\end{rema}


\begin{pr}\label{pglu}
I In the setting of Definition \ref{dglu}(1) assume that $\du$ is endowed with a $t$-structure $t_{\du}$. Then for any $M\in \obj \cu$ any distinguished triangle $A'\to M'(= j^*M)\to B'$  (in $\eu$) possesses a lift $A\to M\to B$ (see Definition \ref{dglu}(2))
such that $i^*A\in \du^{t_{\du}\le 0}$, and  $i^!B\in \du^{t_{\du}\ge 1}$.

II In the setting of Definition \ref{dglu}(4) the following statements are fulfilled.

1. There exists a $t$-structure  $t_{\cu}$ for $\cu$ glued from  $t_{\du}$ and $t_{\eu}$.

2. $t_{\cu}$ is characterized by the following property:  $i_*$ and $j^*$ are $t$-exact. 

Moreover,  
$j_!$ and $i^*$ are right $t$-exact (see Remark \ref{rts}(3)), whereas $j_*$ and $i^!$ are left $t$-exact (with respect to $t_{\du}$, $t_{\cu}$, and $t_{\eu}$, respectively).


III In the setting of Definition \ref{dglu}(1) assume that
$\cu,\du,\eu$ are  endowed with weight structures $w_{\cu},\ w_{\du}$, and $w_{\eu}$, respectively, and that $i_*$ and $j^*$ are weight-exact. Then we will say that $w_{\cu},\ w_{\du}$, and $w_{\eu}$ are {\it compatible}. 

In this situation
$j_!$ and $i^*$ are left weight-exact, whereas $j_*$ and $i^!$ are right weight-exact.
Besides, we have: $\cu_{w\ge 0}
=\{M\in \obj
\cu:\ i^!M\in \du_{w_\du\ge 0} ,\ j^*M\in \eu_{w_\eu\ge 0} \}$ and
$\cu_{w\le 0}
=\{M\in \obj \cu:\ i^*M\in \du_{w_\du\le 0} ,\ j^*M\in
\eu_{w_\eu\le 0} \}$.

IV Assume that $\cu$, $\du$, and $\eu$ are endowed with compatible $t$-structures (see Definition \ref{dglu}(4)). Then for any $M,Y\in \cu^{t=0}$, $M',Y'\in \eu^{t=0}$, $i\in \z$ the following statements are valid.

\begin{enumerate}

\item\label{ilift} $j^*j_!M'\cong j^*j_{!*}M'\cong j^*j_{*}M'\cong M'$. 

\item\label{iim} $i^*j_{!*}M'\cong \tau_{\le -1}i^*j_{*}M'$, and $i^!j_{!*}M'\cong \tau_{\ge 1} i^!j_{!}M'$. 

\item\label{icohom} If a complex $A\to B\to C$ (in $\hrt_{\du}$) is exact in the term $B$, then the middle-term homology object of the complex  $j_{!*}A\to j_{!*}B\to j_{!*}C$ belongs to $i_*\du^{t=0}$. 


\item\label{ibiext} $M$ can be obtained from $j_{!*}j^*M$ via two extensions by elements of $i_*\du^{t=0}$. 

\item\label{imonoepi} $j_{!*}$ maps monomorphisms to monomorphisms, and epimorphisms to epimorphisms.

\item\label{iiim} $j_{!*}M'$ does not have non-trivial subobjects of factor-objects belonging to $i_*\hrt_{\du}$.

\item\label{issm} 
The homomorphism $\cu(j_{!*}M',j_{!*}Y')\to \eu(M',Y')$ induced by $j^*$ is bijective.

\item\label{isso} If $M'$ is simple, $j_{!*}M'$ also is.

\item\label{isss} If $M'$ is semisimple, then $j_{!*}M'$ can be functorially characterized as a semisimple lift of $M'$ none of whose components are  killed by $j^*$.

\end{enumerate}

\end{pr}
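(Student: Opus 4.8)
The plan is to follow the treatment of intermediate extensions in \S1.4 of \cite{bbd} (especially 1.4.22--1.4.26 of ibid.), adapting it to the abstract gluing setting of Definition \ref{dglu}; the point is that axioms (i)--(vi) there are exactly what those arguments use. Two preliminary facts will be used throughout. First, $i_*$ identifies $\hrt_{\du}$ with the Serre subcategory of $\hrt_{\cu}$ consisting of those $N$ with $j^*N=0$ (this is the standard 'gluing of hearts': it follows from the $t$-exactness of $i_*$ and $j^*$, property (iii) of Definition \ref{dglu}(1), and the triangles of (iv)). Secondly, $H_0^{t_{\cu}}j_!$ and $H_0^{t_{\cu}}j_*$, viewed as functors $\hrt_{\eu}\to\hrt_{\cu}$, are respectively left and right adjoint to $j^*:\hrt_{\cu}\to\hrt_{\eu}$; indeed, for $A\in\cu^{t_{\cu}\le 0}$ and $N\in\cu^{t_{\cu}=0}$ the orthogonality axiom of $t_{\cu}$ gives $\cu(A,N)\cong\hrt_{\cu}(H_0^{t_{\cu}}A,N)$, and one combines this (and its dual) with the triangulated adjunctions $(j_!,j^*)$, $(j^*,j_*)$ and the isomorphisms $j^*j_!\cong j^*j_*\cong\id_{\eu}$ from (vi). Granting these, assertion \ref{ilift} is immediate: $j^*j_!X'\cong j^*j_*X'\cong X'$ by (vi), and applying the exact functor $j^*$ (which is $t$-exact) to $j_{!*}X'=\imm(H_0^{t_{\cu}}j_!X'\to H_0^{t_{\cu}}j_*X')$ turns it into $\imm(X'\xrightarrow{\id}X')\cong X'$.

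For assertion \ref{iim} I would first record the characterisation of $j_{!*}X'$ as the unique $M\in\cu^{t_{\cu}=0}$ with $j^*M\cong X'$, $i^*M\in\du^{t_{\du}\le -1}$ and $i^!M\in\du^{t_{\du}\ge 1}$. The properties of $i^*M$, $i^!M$ are checked from the definition: the triangle $\tau_{\le -1}j_!X'\to j_!X'\to H_0^{t_{\cu}}j_!X'$ together with $i^*j_!=0$ (axiom (v)) yields $i^*H_0^{t_{\cu}}j_!X'\cong (i^*\tau_{\le -1}j_!X')[1]\in\du^{t_{\du}\le -2}$, and since $j_{!*}X'$ is the quotient of $H_0^{t_{\cu}}j_!X'$ by a subobject $K\in i_*\hrt_{\du}$ (for which $i^*K$ sits in degree $0$), the long exact sequence of $i^*$ forces $i^*j_{!*}X'\in\du^{t_{\du}\le -1}$; dually $i^!j_{!*}X'\in\du^{t_{\du}\ge 1}$. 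The explicit formulas then fall out of the adjunction triangles of (iv): applying $i^*$ to $i_*i^!X\to X\to j_*j^*X$ with $X=j_{!*}X'$ (using $i^*i_*\cong\id$ and \ref{ilift}) gives a triangle $i^!j_{!*}X'\to i^*j_{!*}X'\to i^*j_*X'\to$, and comparing cohomological degrees yields $i^*j_{!*}X'\cong\tau_{\le -1}i^*j_*X'$; applying $i^!$ to $j_!j^*X\to X\to i_*i^*X$ with $X=j_{!*}X'$ gives $i^!j_!X'\to i^!j_{!*}X'\to i^*j_{!*}X'\to$, and the same bookkeeping yields $i^!j_{!*}X'\cong\tau_{\ge 1}i^!j_!X'$.

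Assertions \ref{icohom}--\ref{isss} are then essentially formal. I would prove \ref{iiim} first: a quotient of $j_{!*}X'$ lying in $i_*\hrt_{\du}$ is also a quotient of $H_0^{t_{\cu}}j_!X'$, hence corresponds to an element of $\hrt_{\cu}(H_0^{t_{\cu}}j_!X',N)\cong\hrt_{\eu}(X',j^*N)=0$ (as $j^*N=0$); dually, a subobject of $j_{!*}X'$ in $i_*\hrt_{\du}$ is a subobject of $H_0^{t_{\cu}}j_*X'$ and vanishes by the right adjunction. Given \ref{iiim}: in \ref{icohom}, the middle homology of $j_{!*}A\to j_{!*}B\to j_{!*}C$ maps under the exact functor $j^*$ (using $j^*j_{!*}\cong\id$) to the middle homology of $A\to B\to C$, which is $0$, so it lies in $i_*\du^{t=0}$. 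In \ref{imonoepi}, the kernel of $j_{!*}f$ has $j^*$-image $\ke f=0$, hence lies in $i_*\hrt_{\du}$, and being a subobject of $j_{!*}X'$ it vanishes by \ref{iiim}; dually for cokernels of epimorphisms. For \ref{ibiext}, the long exact $H^{t_{\cu}}$-sequence of $j_!j^*X\to X\to i_*i^*X$ gives $0\to H_0^{t_{\cu}}j_!j^*X\to X\to E\to 0$ with $E$ a subobject of $i_*H_0^{t_{\du}}i^*X$, hence $E\in i_*\hrt_{\du}$, while $0\to K\to H_0^{t_{\cu}}j_!j^*X\to j_{!*}j^*X\to 0$ with $K\in i_*\hrt_{\du}$, so the filtration $0\subset K\subset H_0^{t_{\cu}}j_!j^*X\subset X$ exhibits $X$ as two successive extensions of $j_{!*}j^*X$ by objects of $i_*\du^{t=0}$. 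For \ref{issm}, injectivity of $\phi\mapsto j^*\phi$ holds since the image of a $\phi$ with $j^*\phi=0$ is a quotient of $j_{!*}X'$ in $i_*\hrt_{\du}$, hence $0$; for surjectivity a $\psi\in\eu(X',Y')$ lifts via the two heart-adjunctions to $\tilde\psi\in\hrt_{\cu}(H_0^{t_{\cu}}j_!X',H_0^{t_{\cu}}j_*Y')$, which kills $K$ (as $\hrt_{\cu}(K,H_0^{t_{\cu}}j_*Y')\cong\hrt_{\eu}(j^*K,Y')=0$) and has image inside $j_{!*}Y'$ (the further composite to $\cok(j_{!*}Y'\hookrightarrow H_0^{t_{\cu}}j_*Y')$ factors through an object that is at once a quotient of $j_{!*}X'$ and a subobject of an object of $i_*\hrt_{\du}$, hence is $0$ by \ref{iiim}), producing the required lift. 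Assertion \ref{isso} follows from \ref{iiim}: a nonzero subobject $M$ of $j_{!*}X'$ has $j^*M\in\{0,X'\}$; the first case contradicts \ref{iiim}, and in the second $j_{!*}X'/M\in i_*\hrt_{\du}$ is a quotient of $j_{!*}X'$, hence $0$. Finally for \ref{isss} one writes $X'=\bigoplus_k X'_k$ with $X'_k$ simple; then $j_{!*}X'\cong\bigoplus_k j_{!*}X'_k$ is semi-simple (by \ref{isso}) with no component killed by $j^*$ (by \ref{ilift}), while for any semi-simple lift $M$ of $X'$ with no component killed by $j^*$ each simple summand $M_k$ is, being simple and not in $i_*\hrt_{\du}$, isomorphic to $j_{!*}j^*M_k$ with $j^*M_k$ simple (using \ref{imonoepi}), whence $M\cong j_{!*}j^*M$; the asserted functorial characterisation then amounts to the fact, contained in \ref{issm} and \ref{isso}, that $j^*$ and $j_{!*}$ are mutually quasi-inverse equivalences between the semi-simple objects of $\hrt_{\eu}$ and the semi-simple objects of $\hrt_{\cu}$ having no component killed by $j^*$.

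The main obstacle is assertion \ref{iim}: setting up the characterisation of $j_{!*}X'$ through the truncation behaviour of $i^*j_{!*}X'$ and $i^!j_{!*}X'$ requires the two preliminary heart-adjunctions and some careful cohomological bookkeeping with the adjunction triangles. Once \ref{iim} (or just the support information it encodes, together with \ref{iiim}) is in place, all remaining assertions follow formally, as indicated above.
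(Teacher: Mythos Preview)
Your proposal addresses only part~IV of the proposition; parts~I, II, and~III are not treated at all. Part~II can indeed be dispatched by a reference to Theorem~1.4.10 of \cite{bbd}, and part~III by a reference to \cite{brelmot}; but part~I is a separate statement requiring its own construction. It asks you, given only a $t$-structure on $\du$ (none on $\eu$), to lift an arbitrary triangle $A'\to j^*X\to B'$ in $\eu$ to a triangle $A\to X\to B$ in $\cu$ with $i^*A\in\du^{t_{\du}\le 0}$ and $i^!B\in\du^{t_{\du}\ge 1}$. The paper does this by an explicit octahedral argument (set $Y=\co(X\to j_*B')[-1]$, then $A=\co(Y\to i_*\tau_{\du,\ge 1}i^*Y)[-1]$, and read off the required properties from the octahedron). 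Nothing in \S1.4.22--1.4.26 of \cite{bbd} covers this, so you need to supply it.

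Within part~IV your treatment is largely fine and close in spirit to the paper's (both reduce to \cite{bbd}, \S1.4), but your argument for assertion~\ref{ibiext} contains an error. You claim the long exact sequence of the triangle $j_!j^*X\to X\to i_*i^*X$ gives $0\to H_0^{t_{\cu}}j_!j^*X\to X\to E\to 0$; this fails because the kernel of $H_0^{t_{\cu}}j_!j^*X\to X$ is $i_*H^{-1}_{t_{\du}}(i^*X)$, which need not vanish. Concretely, take $X=j_{!*}X'$: then $i^*X\in\du^{t_{\du}\le -1}$ by assertion~\ref{iim}, so $H^{-1}(i^*X)$ is typically nonzero, and indeed the map $H_0^{t_{\cu}}j_!X'\to j_{!*}X'$ is the defining epimorphism with kernel $K\in i_*\hrt_{\du}$, not a monomorphism. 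The fix is to replace your filtration $0\subset K\subset H_0^{t_{\cu}}j_!j^*X\subset X$ by $0\subset F_1\subset F_2\subset X$ with $F_2=\imm(H_0^{t_{\cu}}j_!j^*X\to X)$ and $F_1$ the image of $K$ in $X$; then $F_1$ and $X/F_2$ lie in $i_*\hrt_{\du}$ (their $j^*$ vanishes) and $F_2/F_1\cong j_{!*}j^*X$. This is essentially what the paper does when it says $b\circ a$ factors through $X$ and argues as in~\ref{icohom}.
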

\begin{proof}
I We argue as in the proof of Theorem 1.4.10 of \cite{bbd}. We consider $Y=\co (M\to j_*B')[-1]$ and  
$A=\co(Y\to i_*(\tau_{\du, \ge 1}i^*Y))[-1]$. 
We complete the commutative triangle $A\to Y\to M$ to  an octahedral diagram 
$$
\xymatrix{ j_*B' \ar[rd]^{[1]}\ar[dd]^{[1]} & & M \ar[ll] \\ & Y \ar[ru]\ar[ld] & \\ i_*\tau_{\du,\ge 1}i^*Y \ar[rr]^{[1]} & & A\ar[lu]\ar[uu] }  
$$
and denote its sixth vertex by $B$.

Now we argue exactly as in loc. cit. (using the fact that exact functors convert distinguished triangles into distinguished ones, and the 'axioms' of a gluing data). %
We obtain that $j^* (i_*\tau_{\du,\ge 1}i^*Y\to  B\to j_*B')\cong (0\to j^*B\to B')$; hence $j^*B\cong B'$. 
Next, $j^*(A\to M\to B)\cong (j^*A\to M'\to B')$; hence $j^*A\cong A'$.
Furthermore, $i^*(A\to Y\to i_*\tau_{\du,\ge 1}i^*Y)\cong (i^*A\to i^*Y\to \tau_{\du,\ge 1}i^*Y)$; hence $i^*A\cong \tau_{\du,\le 0}i^*Y$. It remains to note that $i^! (i_*\tau_{\du,\ge 1}i^*Y\to  B\to j_*B')\cong (\tau_{\du,\ge 1}i^*Y\to  i^*B\to 0)$; hence $i^!B\cong \tau_{\du,\ge 1}i^*Y$.

 II.1. This is (exactly)  Theorem 1.4.10 of \cite{bbd}. 
 
 II2. Obviously, if $t$ is glued from $t_{\du}$ and $t_{\eu}$, then $j^*$ is $t$-exact. Since $j^*i_*=0$, the adjunctions to $i_*$ also yield that $i_*$ is $t$-exact; see 
 Remark \ref{rts}(3).
 
 Now, suppose that the $t$-exactness of $i_*$ and $j^*$ is  fulfilled for some 
 $t$-structure $t'$ for $\cu$. 
 Then loc. cit. 
yields all of our $t$-exactness statements  for $t'$ (and so, they are fulfilled for  $t$). It follows that  $1_{\cu}$ is $t$-exact as a functor from $(\cu,t')$ to $(\cu,t)$. Applying the other statement in loc. cit., 
we obtain that $t=t'$.  

III Immediate from Proposition 1.2.3(13,15) of \cite{brelmot}.

IV The proofs are 
easy applications of the results of (the end of) \S1.4 of \cite{bbd}.

(IV\ref{ilift})  
is immediate from the  axioms of a gluing data and the $t$-exactness  of $j^*$.

(IV\ref{iim}) is immediate from Proposition 1.4.23 of ibid. 

(IV\ref{icohom}): The previous assertion yields that the middle term homology in question is killed by $j^*$. Since the categorical kernel of $j^*$ is $i_*\du$, and $i_*$ is $t$-exact, we obtain the result.

(IV\ref{ibiext}): By assertion IV\ref{ilift}, we have a $\hrt_{\cu}$-epimorphism $a:H_{0}^{t_{\cu}}j_{!}M\to j_{!*}M$, and a $\hrt_{\cu}$-monomorphism $b:j_{!*}M\to H_{0}^{t_{\cu}}j_{*}M$; both of them become isomorphisms after the application of $j^*$. Besides, adjunctions yield that $b\circ a$ factors through $M$. As in the proof of (IV\ref{icohom}), the result follows immediately.

(IV\ref{imonoepi},\ref{iiim}): Immediate from Corollary 1.4.25 of ibid.

(IV\ref{issm}) is an easy consequence of (IV\ref{iiim}). Indeed, since $j^*j_{!*}\cong 1_{\hrt_{\eu}}$, it suffices to verify that the homomorphism $\cu(j_{!*}M',j_{!*}Y')\to \eu(M',Y')$ induced by $j^*$ is injective. Let $f $ be a non-zero element of $ \cu(j_{!*}M',j_{!*}Y')$. Then assertion (IV\ref{iiim}) yields that $\imm f$ is not isomorphic to an object of $i_*(\hrt_{\du})$. Hence $j^*\imm f\neq 0$; since $j^*$ is $t$-exact we obtain that $j^*f\neq 0$.

(IV\ref{isso}): This is just Proposition 1.4.26 of ibid. 

(IV\ref{isss}): 
We may assume that $M'$ is simple. Then $j_{!*}M$ is simple also by the previous assertion. Assertion IV\ref{ibiext} yields that $j_{!*}M$ is the only simple lift of $M'$. Lastly, assertion IV\ref{iiim} implies that this characterization of $j_{!*}M$ is functorial. 

\end{proof}

\begin{rema}
So, $j_{!*}M'$ is the 'minimal' lift of $M'$. 
 As a consequence, when  we will 'lift nice decompositions' (in the proof of Theorem 
\ref{tvmts} below)  it will be sufficient to check whether $j_{!*}$ 'respects weights'. In order to verify the latter assertion, we will apply Theorem \ref{tdeg}(II).

\end{rema}

\section{On relative motives and $\ql$-sheaves}\label{srrelmot} 

In \S\ref{scheme} we introduce certain 
 terminology 
for schemes and their morphisms; we also discuss our restrictions on base schemes.

In \S\ref{sbrmot}  we recall some of basic properties of Beilinson motives over $S$ (as defined in \cite{degcis}). 

In \S\ref{schow} we recall certain properties of the Chow weight structure $\wchow$ for $\dmcs$ (as introduced in \cite{hebpo} and \cite{brelmot}); we also prove some new 'continuity' properties of this weight structure.

In \S\ref{setre} we treat the \'etale realization of $S$-motives and the perverse $t$-structure for its target.

In \S\ref{swsshetl} we study weights for mixed sheaves and relate them with (the degeneration of) Chow-weight spectral sequences for $\hetlz$. The latter degenerate at $E_2$  if $S$ is a very reasonable scheme (we conjecture that they degenerate for a general reasonable $S$ also). This yields that the   Chow-weight filtration for such (co)homology  is strictly restricted by ('motivic') morphisms.

\subsection{Schemes and morphisms: some terminology and a discussion of restrictions}\label{scheme}

All morphisms and  schemes below will be separated. Besides, all schemes will be excellent Noetherian of finite Krull dimension. 
$S$ will usually be our 
base scheme. Often $j:U\to S$ will be an open immersion, and $i:Z\to S$ will be the complementary closed embedding.


Below $l$ will always be a prime number (as well as $l'$); we will usually assume $l$ to be fixed. $p$ will usually denote the characteristic of some scheme (so it is either a prime number or $0$); usually $p\neq l$.
We will say that $p$ is the characteristic of $S$ (only) if it is an equicharacteristic $p$ scheme (so it is an $\sfp$-scheme if $p>0$ and a $\sq$-one for $p=0$).

Below we will identify a Zariski point (of a scheme $S$) with the spectrum of its residue field (sometimes we will also make no distinction between the spectrum of a field and the field itself). $\sss$ will denote the set of 
(Zariski) points of $S$. For 
$K\in \sss$ we will denote the natural morphism $K\to S$ by $j_K$. 
We will call the dimension of the closure of $K$ in $S$  the dimension of $K$.

Now we introduce some terminology for schemes and their morphisms.

\begin{defi}\label{dmor}   
1. We will call a 
scheme $S$ {\it reasonable} if it is
of finite type over some  (Noetherian excellent separated)  
  regular scheme $S_0$ of dimension lesser than or equal to $1$.
  
  We will only consider reasonable schemes below. For most of them one can assume $S_0$ to be fixed (yet we will often consider Zariski points of our schemes).
  In particular, when we will say that a morphism of schemes is of finite type we will always 
  assume that we have chosen a common $S_0$ for them.
  
 2. A morphisms $g:X'\to Y$ will be called {\it essentially pro-affine} if it factorizes as $X'\stackrel{h}{\to}X\stackrel{f}{\to} Y$ where $f$ is a finite type morphism, $h$ is the  inverse limit of a filtered system of affine 
 morphisms $h_i:X_i\to X$.
 
 3. An essentially pro-affine morphism $g':X\to Y$ will be called  
 {\it quasi-regular} if all the corresponding $f_i:X_i\to Y$ are compositions of chains of finite type smooth morphisms and finite universal homeomorphisms.

 4. A reasonable scheme $S$ will be called {\it very reasonable} if there exists a surjective finite type smooth morphism  $f:S'\to S$ such that $S'$ 
 can be presented as the inverse limit of some filtered system of schemes $S'_i$ that are of finite type over (the spectrum of) some field $K$ and such that all the transition morphisms $S'_i\to S'_j$ are smooth affine.

 \end{defi}

\begin{rema}\label{reas}
I.1. Obviously, 
any morphism of spectra of fields is quasi-regular, whereas any (separated) finite type scheme over the spectrum of a field is very reasonable.

2. Certainly, 
essentially affine and quasi-regular morphisms are stable with respect to base change. 

3. Being very reasonable is an (\'etale) local property; this is why we did not assume that $S=S'$ in the definition above. We will use this fact in the proof of Theorem \ref{tdeg}(II) below in order to reduce general very reasonable schemes to irreducible ones.

II By the celebrated theorem of Popescu (see Theorem 1.8 of \cite{pop} or Theorem 4.1.5 of \cite{degcis}) all {\it regular} morphisms of Noetherian affine schemes 
are quasi-regular (this result motivated our choice of the term).

III We have three reasons to restrict ourselves to reasonable schemes (in this paper). Yet possibly  our results can  be extended  to arbitrary excellent Noetherian separated schemes of finite Krull dimension. Now we explain this in more detail.

1. The 
Chow motives over $S$ are only known to yield the heart of a weight structure if $S$ is of finite type over an $S_0$ of dimension $\le 2$. Yet this restriction can be avoided: in \S2.3 of \cite{brelmot} an 'alternative' construction of the Chow weight structure was described. In loc. cit. it was proved that this version of $\wchow$ possesses all the properties listed in Theorem \ref{twchow}(II-IV) below, whereas (the new) parts V-VI of the theorem can be established using the explicit 'generators' of $(\dmcs_{\wchow\le 0},\dmcs_{\wchow\le 0})$ given by Proposition 2.3.4(I2) of ibid.

2. The existence of such an $S_0$ is also required (in Theorem 5.8.12(2) of \cite{cdet}) in order to ensure the existence of a dualizing  object in $\dmhsz$. Yet we only need (for the "h-version" of the \'etale realization of $S$-motives; see \S\ref{setre} below) the existence of a dualizing object in the category
$\dbcszl$ defined in \S5.9.19 of ibid. Possibly the latter fact can be deduced from Theorem XVII.0.9 of \cite{illgabb}.

3. Quite probably the results of ibid. 
also yield the existence of a 'reasonable' $\dshsl$ together with a (self-dual) perverse $t$-structure for it for a not necessarily reasonable $S$. Yet the author has never met any claims of this sort in the literature. On the other hand, in Theorem 6.3 of \cite{eke} says that  the arguments  of \cite{bbd} (for $l$-adic \'etale sheaves over finite type $\sfp$-schemes) carry over to reasonable schemes.


IV We define the class of very reasonable schemes since we can prove 
Theorem \ref{tdeg}(II) for them (and this is crucial for the main results of this paper). Now we discuss possible generalizations of this statement.


1. 
One can reduce Conjecture \ref{cdeg} for $S$ to that for its base changes to the completions of $S_0$ at closed points. Next, our method of the proof of Theorem \ref{tdeg}(II) would require 'approximating' the corresponding $S'$ by schemes $S'_i$ such that certain 'weights' are defined for (some version of) perverse sheaves over $S_i$.

Yet the author does not know of any weights of this sort in the case when $S'_i$ are not equicharacteristic schemes (even if they are the spectra of complete discrete valuations rings); this problem seems to be related with Deligne's weight-monodromy conjecture.

2. Moreover, the argument we use below does not work (even) for $S'$ being the spectrum of $K[[t]]$ ($K$ is a field). The problem here is the following one: though any motif $M$ over $S'$ has a 'model' $M_R$ over some $\spe R$, $R$ is finitely generated over $K$ (see Proposition \ref{pcisdeg}(\ref{icont})), the morphism $S'\to \spe R$ does not even have to be equidimensional (since the dimension of $R$ can be arbitrarily large). Possibly, one can apply the method used for the proof of Proposition 5.1 of \cite{ito}, and consider the pullback $M_{R'}$ of $M_R$ to a one-dimensional factor $R'$ of $R$. Next in order to establish the degeneration of $T_{\wchow(S')}(\hetl, M)$ one should apply the proper base change theorem (somehow, in order to relate $M$ with $M_{R'}$) and (probably) consider a perversity for $\spe R$ that is not self-dual (so that the  higher perverse  
inverse images with respect to morphisms $S'\to \spe R$ and $\spe R'\to \spe R$ of the terms of the Chow-weight spectral sequence for perverse \'etale homology of $M_R$ would vanish). One may say that such an $M_{R'}$ is a "clever model" for $M$ (a sort of Artin approximation for $R$).

Possibly, the author will study these questions (and relate them with Rappoport-Zink spectral sequences) in a subsequent paper.

\end{rema}

All the motives that
we will consider in this paper will have  rational coefficients (so that we will not mention rational coefficients in the notation; this includes $\chow$ and $\dmgm$).

\subsection{Beilinson $S$-motives 
 (after Cisinski and Deglise)}\label{sbrmot}

We list some of the  properties of the triangulated categories of Beilinson motives
(this is the version of relative
Voevodsky's motives with rational coefficients described by Cisinski and Deglise).

\begin{pr}\label{pcisdeg}

Let $X,Y$ be any (reasonable) 
schemes; $f:X\to Y$ is a (separated) finite type morphism. 

\begin{enumerate}

\item\label{imotcat} 
A tensor triangulated $\q$-linear category $\dmcx$ with the unit object $\q_X$ is defined. 

$\dmcx$ is the category of  {\it constructible Beilinson motives} over $X$, as defined (and thoroughly studied) in \S14 of \cite{degcis}.


\item\label{ivoemot} If $S$ is the spectrum of a perfect field, $\dmcs$ is isomorphic to the category $\dmgm=\dmgm(S)$ of Voevodsky's geometric motives (with rational coefficients) over $S$ (see \cite{1}). Besides, $\dmgm=\lan\chow\ra$ (here we consider the full embedding $\chow\to \dmgm$ that is a natural extension of the embedding $\chowe\to \dmge$ given by ibid.).  

\item\label{iidcompl} All 
 $\dmcx$ are idempotent complete.

\item\label{imotfun}  For any  $f$ 
the following functors
 are defined:
$f^*: \dmc(Y) \leftrightarrows \dmcx:f_*$ and $f_!: \dmcx \leftrightarrows \dmcy:f^!$; $f^*$ is left adjoint to $f_*$ and $f_!$ is left adjoint to $f^!$.

We call these the {\bf motivic image functors}.
Any of them (when $f$ varies) yields a  2-functor from the category of 
reasonable schemes
with separated morphisms of finite type to the 2-category of triangulated categories.

\item\label{iupstar}  $f^*$ is symmetric monoidal; $f^*(\q_Y)=\q_X$.

\item \label{ipur}
$f_*\cong f_!$ 
if $f$ is proper. 

If $f$ is an open immersion, we  have $f^!=f^*$. More generally, $f^!(-)\cong f^*(-)(s)[2s]$ 
 if $f$ is smooth 
 (everywhere) of relative dimension $s$.

\item \label{itr}
If $X,Y$ are regular, and $\mathcal{O}_X$ is a free finite-dimensional  $\mathcal{O}_Y$-module, then 
the adjunction morphism $M\to f_*f^*(M)$ splits for any 
 $M\in \obj \dmcy$.


\item\label{iglu}

If $i:Z\to X$ is a closed embedding, $U=X\setminus Z$, $j:U\to X$ is the complementary open immersion, then
the motivic image functors yield a  gluing data for $\dmc(-)$  (in the sense of 
Definition \ref{dglu}(1); one should set $\cu=\dmcs$, $\du=\dmc(Z)$, and $\eu=\dmcu$ in it).

 \item\label{igenc}
 $\dmcs$ (as a triangulated category) is generated by $\{ g_*(\q_X)(r)\}$, where $g:X\to S$ runs through all 
smooth separated finite type morphisms, $r\in \z$.

\item\label{ridmot} The functor $g^*$
 can be defined for any separated morphism $g$ (of schemes) not necessarily of finite type; this definition respects the composition for morphisms. 
 
Moreover, one can also define $j_K^!$ for $K\in \sss$ (see 
\S\ref{scheme}). Besides, if for composable morphisms $g,h$ (not necessarily of finite type) all of $h^!,g^!,(h\circ g)^!$ are defined (i.e. any of  $h,g,h\circ g$ is either of finite type or of the type $j_K$), then  $(h\circ g)^!\cong g^!\circ h^!$.

\item\label{icont}
Let $g:X'\to Y$ be an essentially pro-affine morphism (see Definition \ref{dmor}(2)); adopt the notation of loc. cit. Then $\dmc(X')$  is isomorphic to the $2$-colimit of the categories $\dmc(X_i)$; in these isomorphism all the connecting functors are given by the corresponding 
$(-)^*$ (cf. the previous assertion).

\item\label{iconspr} The functor $g^*$ is conservative for any 
essentially pro-affine surjective morphism $g$ (in particular, for a morphism of spectra of fields).

\item\label{iconsp} The family of functors $j_K^*$, where $K$ runs through 
$\sss$ (see \S\ref{scheme}), is conservative on $\dmcs$.

\item \label{itro}
If $g$ is a pro-finite universal homeomorphism then $g^*$ is an equivalence of categories.

\item\label{iexch} 
For a Cartesian square
of separated  
morphisms 
\begin{equation}\label{ebchs}
\begin{CD}
Y'@>{f'}>>X'\\
@VV{g'}V@VV{g}V \\
Y@>{f}>>X
\end{CD}\end{equation}
we have $g^*f_!\cong f'_!g'{}^*$ (for $g$ not necessarily of finite type) and $g'_*f'{}^!\cong f^!g_*$.

\item\label{iexchn} Adopt the notation of the previous assertion, and assume also that $g$ is a pro-finite universal homeomorphism. Then we also have
$g^*f_*\cong f'_*g'{}^*$ and $g'^*f^!\cong f'{}^!g^*$.

\item\label{iexche} We   
 have these isomorphisms also in the case when $g$ is the composition of the inverse limit of smooth affine morphisms with any smooth finite type morphism.

\item\label{il4onepoII} In the setting of assertion \ref{iglu}, 
for any $M,N\in \obj \dms$ there exists a complex
$\dmc(Z) (i^*(M),i^! (N))\to \dmcs(M,N)\to \dmc(U) (j^*M,j^* N)$ (of abelian groups) that is exact in the middle.

\end{enumerate}

\end{pr}
\begin{proof}
Most of these  statements were stated in the introduction of \cite{degcis} (and proved later in ibid.); see \S1.1 of 
 \cite{brelmot} for more details.

The first part of assertion \ref{ivoemot} 
 is given by Corollary 16.1.6 of ibid. The second part of it was proved in \S6.4 of \cite{mymot}.

Assertion \ref{itr} was established in process of the proof of Theorem 14.3.3 of \cite{degcis}. 

Assertion \ref{iconspr} is just the theorem itself (cf. Definition 2.1.7 of ibid.) in the case when $g$ is of finite type; the general case follows immediately by assertion \ref{icont}.

Assertion \ref{iconsp}  easily follows from Theorem \ref{twchow}(IV) below. 

In the case when $g$ is finite, assertion \ref{itro} is given by Proposition 2.1.9  of \cite{degcis} (note that we can apply the result cited by 
Theorem 14.3.3 of ibid.). In order to pass to the limit in this statement one should apply assertion \ref{icont} (once more).

\ref{iexchn}. Recall that for any $S$ the category $\dmcs$ is a full triangulated subcategory of a certain $\dm(S)$; $\dmcs$ {\it weakly generates} $\dms$ (i.e. $\dmcs^{\perp}=\ns$ in $\dm(S)$).
Moreover, the motivic image functors can be extended to $\dm(-)$; $g^*$ and its right adjoint $g_*$ are defined for these categories for an arbitrary morphism $g$ (of reasonable schemes). Hence in our situation $g_*$ yields an inverse isomorphism $\dmc(X')\to \dmc(X)$ (since for any $M\in \obj \dmcx$ a cone of the adjunction unit morphism $M\to g_*g^*M$ is orthogonal to $\dmcx$ by assertion \ref{itro}). Hence it suffices to verify:   $f_* g'_*\cong g_*f'_*$ and $g'_*f'{}^!\cong f^!g_*$. The first isomorphism is obvious, whereas the second isomorphism was established in \cite{degcis} for $g$ not necessarily of finite type.
 
Assertion \ref{iexche} is given by Propositions 4.3.14 and 4.3.12 of ibid., respectively.

Assertion \ref{il4onepoII} is an easy consequence of assertion \ref{iglu}.

\end{proof}

\begin{rema}
Actually, $g^*$ is conservative for any surjective morphism $g:X\to Y$ of 
reasonable schemes. Indeed, part \ref{iconsp} of the proposition reduces this statement to the case when $Y$ is the spectrum of a field;  part \ref{iconspr} allows to assume that this field is algebraically closed, and then $g$ splits. 
\end{rema}

\subsection{The Chow weight structure for $\dmcs$}\label{schow}

We define $\chows$ as the Karoubi-closure of $
\{f_*(\q_X)(r)[2r]\}$ in $\dmcs$; here $f:X\to S$ runs through all finite type projective  morphisms such that $X$ is regular, $r\in \z$.


\begin{theo}\label{twchow}

I There exists a (unique) bounded weight structure $\wchow$ for $\dmcs$ 
whose heart is $\chows$.  For any $n\in \z$ the functor $-(n)[2n]$ is weight-exact with respect to this weight structure.

II Let $f:X\to Y$ be a (separated) finite type
 morphism of 
 schemes. Then the following statements are valid.

1. $f^!$ and $f_*$ are right weight-exact; $f^*$ and $f_!$ are left weight-exact.

2. Suppose moreover that $f$ is smooth. Then $f^*$
and $f^!$ are also weight-exact.


3. $f^*$ is weight-exact also if $f$ is a finite universal homeomorphism.

4. If $f$ is proper, then $f_*\q_X\in \dmc(Y)_{\wchow\le 0}$.

III Let $K$ be a generic point of  $S$, $M\in \obj\dmcs$. 

1. Suppose that 
 $j_K^*M\in \dmck_{\wchow\ge 0}$ (resp.  $j_K^*M\in\dmck_{\wchow\le 0}$). Then there exists an open immersion $j:U\to S$, $K\in U$, such that $j^*M\in \dmc(U)_{\wchow\ge 0}$ (resp. $j^*M\in \dmc(U)_{\wchow\le 0}$).

2. Suppose that 
 $j_K^*M\in \dmck_{\wchow =0}$. Then there exists an open immersion $j:U\to S$, $K\in U$, such that $j^*M$ is a retract of 
$(g\circ h)_*\q_P(s)[2s]$, where $h:P\to U'$ is a smooth projective morphism, $U'$ is a regular scheme, $g:U'\to U$ is a finite universal homeomorphism,  $s\in \z$.

IV  $M\in \dmcs_{\wchow\ge 0}$ (resp. $M\in \dmcs_{\wchow\le 0}$) if and only if for any $K\in \sss$ we have $j_K^!(M)\in \dmc(K)_{\wchow\ge 0}$ (resp. $j_K^*(M)\in \dmc(K)_{\wchow\le 0}$).



V1. $f^*$ is left weight-exact for any morphism $f:X\to Y$ of reasonable schemes.

2. Consider  an essentially pro-affine morphism $g:X'=\prli_{i\in I} X_i\to Y$ (see Definition \ref{dmor}(2)) and the corresponding $g_i:X'\to X_i$. Then for an $M\in \obj \dmc(X')$
we have: $M\in \dmc(X')_{\wchow\le 0}$ if and only if there exist  
$i\in I$ 
and   $M_i\in \dmc(X_i)_{\wchow\le 0}$ such that $M\cong g_i^*(M_i)$.

 3. In the notation of the previous assertion, the functor $g^*$ is weight-exact if all the corresponding functors $f_i^*:\dmcs(Y)\to \dmcs(X_i)$ are so.

4. Adopt the assumptions of the previous assertion. Then for an $N\in \obj\dmc(Y)$ we have: $g^*(N)\in \dmc(X')_{\wchow\ge 0}$ (resp. $g^*(N)\in \dmc(X')_{\wchow\le 0}$)  if and only if for any large enough $i\in I$ we have $f_i^*(N)\in \dmc(X_i)_{\wchow\ge 0}$ (resp. $f_i^*(N)\in \dmc(X_i)_{\wchow\le 0}$).

5. In particular, all the parts of assertion V can be applied if $g$ is quasi-regular (see Definition \ref{dmor}(3)).



VI Assume that $g:X'\to Y$ is quasi-regular and surjective. Then  for any $ N\in \obj \dmc(Y)$, we have:  $g^*N\in \dmc(X')_{\wchow\le 0}$ (resp. $g^*N\in \dmc(X')_{\wchow\ge 0}$)
if and only if $N\in \dmc(Y)_{\wchow\le 0}$ (resp. $N\in \dmc(Y)_{\wchow\ge 0}$).

\end{theo}
\begin{proof}
Assertions I--IV were established in \cite{brelmot}; see
Theorems 2.1.2(I) and 2.2.1(I, II, III, V1), Lemma 2.2.4, 
Remark 2.3.7(4), and Proposition 2.2.3 of ibid., respectively. 
For an $f$ that is not quasi-projective assertion II was proved in (Theorem 3.7 of)
\cite{hebpo} (where assertions I-II were proved independently and somewhat earlier than in \cite{brelmot}); yet we will not actually need non-quasi-projective morphisms below.

V1. By Proposition \ref{pbw}(\ref{iwfun}) it suffices to verify that $f^*(\chow(Y))\subset \dmc(X)_{\wchow\le 0}$. 
Proposition \ref{pcisdeg}(\ref{iexch}) reduces the latter fact to assertion II4.

2. 
The "if" part is immediate from the previous assertion.

We verify the converse assertion. 
Applying Proposition \ref{pbw}(\ref{ipost}) to our setting we obtain: there exists a finite set of non-negative indices $J\subset \z$, 
certain projective $p^j:P^j\to X'$ (for all $j\in J$ and some regular $P^j$), and an $r\in \z$  such that $M$ belongs to the smallest Karoubi-closed extension-stable subclass of $\obj \dmc(X')$ containing all $p^j_*\q_{P^j}(r)[2r-j]$. 
Certainly, there exists an $i\in I$ such that all $p^j$ come via base change from certain projective $p^j_i: P^j_i\to X_i$ (for some not necessarily regular $P^j_i$; see 
Theorems 8.8.2 
and 8.10.5 of \cite{ega43}). 
 Proposition \ref{pcisdeg}(\ref{icont}, \ref{iexch}) yields: we can also assume that $M\cong g_i^*(M_i)$ for some $M_i$ belonging to  the smallest Karoubi-closed extension-stable subclass of $\obj \dmc(X_i)$ containing all $p^j_{i*}\q_{P_i^j}(r)[2r-j]$. Lastly, 
Proposition \ref{pbw}(\ref{iext}) yields that $M_i\in \dmc(X_i)_{\wchow\le 0}$.

3. We should verify the right weight-exactness of $g^*$. By Proposition \ref{pbw}(\ref{ipost}) it suffices to verify: $M\perp g^*N$ for any $M\in \dmc(X')_{\wchow\le -1}$, $N\in \dmc(Y)_{\wchow\ge 0}$. We fix $M$ and $N$; let $s\in \dmc(X')(M,g^*N)$.
By 
the previous assertions
there exist $i\in I$ and $M_i\in \dmc(X_i)_{\wchow\le -1}$ such that $M\cong g_i^*(M_i)$.
Hence 
for the transition morphisms $g_{ji}:X_j\to X_i$ (for all $j\ge i$) we have $g_{ji}^*(M_j)\in \dmc(X_j)_{\wchow \le -1}$. Next, the weight-exactness of $f_j^*$
yields that  $g_{ji}^*(M_j)\perp f^*_j(N)$. It remains to apply Proposition \ref{pcisdeg}(\ref{icont}).

4. The "$\wchow\le 0$" part of the statement is given by assertions V1--2. So, we verify the remaining part.

Consider a weight decomposition of $N[1]$: $B\stackrel{s}{\to} N[1]
{\to} A\to B[1]$. If $g^*N\in \dmc(Y)_{\wchow\ge 0}$ then $g^*(s)=0$ (since $\dmc(X')_{\wchow\le 0}\perp g^*(N)[1])$.
Applying Proposition \ref{pcisdeg}(\ref{icont}) once more we obtain:
for any large enough $i\in I$ we have $f_i^*(s)=0$. Hence $f_i^*(N)$ is a direct summand of $f_i^*(A[-1])$ for all such $i$. Since $f_i^*(A[-1])\in \dmc(X_i)_{\wchow\ge 0}$ and $\dmc(X_i)_{\wchow\ge 0}$ is Karoubi-closed in $\dmc(X_i)$, we obtain the result.
 
 Conversely, assume that $f_i^*(N)\in \dmc(X_i)_{\wchow\ge 0}$ for some $i\in I$. Then we obtain 
 $f_j^*(s)=0$. Hence $g^*(s)=0$ also; thus $g^*N$ is a retract of $g^*(A)[-1]$.

 5. The corresponding $f_i$ are weight-exact by assertions II2--3.
 
 VI The "if" statement is given by the previous assertion. It also yields: in order to verify the converse implication, it suffices to do so for  
 $g$ being either a finite universal homeomorphism or a smooth finite type surjective morphism. 
 The latter case is easy since
 $g^*$ is an isomorphism (see Proposition \ref{pcisdeg}(\ref{itro}); hence Proposition \ref{pbw}(\ref{iuni}) (together with assertion II.3) yields the result.

 Now, let $g$ be smooth surjective. First consider the case when $Y=\spe K$ ($K$ is a field; this case is the most interesting to us). 
 Since $X'$ is smooth (and hence Zariski-locally \'etale over an affine $\spe K$-space) there exists 
 a morphism $l:\spe K'\to X'$ (for a field $K'/K$) such that that the composition 
  (structure) morphism $k'=g\circ l:\spe K'\to \spe K$ is \'etale. 

If $g^*N\in \dmc(X')_{\wchow\le 0}$,  then $k'^*N=l^*(g^*N)\in \dmc(K')_{\wchow\le 0}$ (see 
assertion II.1). Since $k'_*$ is weight-exact, we also have $k'_*k'^*N\in \dmck_{\wchow\le 0}$. 
Since $N$ is a retract of  $b'_*b'^*N$ (see Proposition \ref{pcisdeg}(\ref{itr})), we conclude that $N\in \dmck_{\wchow\le 0}$.

Next, if $g^*N\in \dmcs_{\wchow\ge 0}$, then 
we also have  $g^!N\in \dmcx_{\wchow\ge 0}$ (by assertion I and Proposition \ref{pcisdeg}(\ref{ipur})).
Since $k'^*N=k'^!N$, we obtain $k'^*N\in \dmc(\spe K')_{\wchow\ge 0}$. Hence  $k'_*k'^*N\in \dmc(K')_{\wchow\ge 0}$
and Proposition \ref{pcisdeg}(\ref{itr}) yields the result (again).

 In order to reduce our claim from the case of arbitrary finite type smooth morphisms to the one when $Y$ is the spectrum of a field, we apply assertion IV. Here we apply the $2$-functoriality of $(-)^*$ when we consider the case $g^*N\in \dmc(X')_{\wchow\le 0}$, and combine it with Proposition \ref{pcisdeg}(\ref{ipur}) when $g^*N\in \dmc(X')_{\wchow\ge 0}$.

\end{proof}

\begin{rema}\label{runrwchow}
Note also that an alternative construction of $\wchow$  over any (not necessarily reasonable) excellent separated finite-dimensional  scheme $S$ was considered in  \S2.3 of \cite{brelmot}.
Its functoriality properties were only studied with respect to quasi-projective morphisms; yet this is quite sufficient for our purposes.
\end{rema}

Below we will call weight spectral sequences and weight filtrations corresponding to $\wchow$ the {\it Chow-weight} ones.

\subsection{On the \'etale realization of motives and the perverse $t$-structure} \label{setre}

This paragraph is dedicated to the justification of the following statement.

\begin{theo}\label{tetre}
Let $S/\spe\zol$ be a reasonable scheme.

I.1. We have an exact  
functor $\hetl(S):\dmcs\to D^b_cSh^{et}(S,\ql)$; the latter is the triangulated category of constructible \'etale $\ql$-sheaves over $S$ (see below).

2. The target categories $D^b_cSh^{et}(-,\ql)$ of $\hetl(-)$ are equipped with connecting functors of the type $g^*$ for  $g$ being any morphism of reasonable schemes, and also with $f_*$, $ f_!$, and $f^!$ for $f$ being a finite type morphism.

Moreover, $\hetl(-)$ converts the corresponding motivic image functors (see Proposition \ref{pcisdeg}(\ref{imotfun}, \ref{ridmot})) into these \'etale versions. 



II The category $D^b_cSh^{et}(-,\ql)$ (for any reasonable $S/\spe\zol$) is equipped with 
a bounded perverse $t$-structure (for the self-dual perversity) that we will denote just by $t$; the heart of $t$ will be denoted by $\shs$.
The collection of these $t$-structures 
over all reasonable schemes
enjoys the following  properties (for $f:X\to Y$ being a finite type morphism of reasonable schemes).

1. If $f$ is an immersion, then $f_*$ and $f^!$ are left $t$-exact, whereas  $f_!$ and $f^*$ are right $t$-exact (see Definition \ref{dmts}(3)).

2. If $f$ is affine, then $f_!$ is left $t$-exact, and  $f_*$ is right $t$-exact.

3. If $f$ is quasi-finite affine, then $f_*$ and $f_!$ are $t$-exact. 

4. If $f$ is proper of relative dimension $\le d$, then $ f_*[d](=f_![d])$ is  left $t$-exact, and  $f_*[-d]$ is right $t$-exact.

5. If $f$ is smooth (everywhere) of dimension $d$, then $f^![-d]$ and $f^*[d]$ are $t$-exact. 

6. If $K$ is  
a point of $S$  
of dimension $d$ (see \S\ref{scheme}), 
then $j_{K}^*[-d]$ (resp. $j_{K}^![-d]$; see Remark \ref{rjk} below) is left (resp. right) $t$-exact. 

7. Moreover, for $M\in \obj D^b_cSh^{et}(S,\ql)$ we have $M\in D^b_cSh^{et}(S,\ql)^{t\le 0}$ (resp. $M\in D^b_cSh^{et}(S,\ql)^{t\ge 0}$) whenever
for any $K\in \sss$, $K$ is of 
dimension $d$, we have $j_{K}^*M[-d]\in D^b_cSh^{et}(K,\ql)^{t\le 0}$ (resp. $j_{K}^!M[-d]\in D^b_cSh^{et}(K,\ql)^{t\le 0}$). 

8.  For  a closed  embedding $i:Z\to S$ 
and the complementary immersion $j:U\to S$ for $M\in \obj D^b_cSh^{et}(S,\ql)$ we have: $M\in D^b_cSh^{et}(S,\ql)^{t\le 0}$ (resp. $M\in D^b_cSh^{et}(S,\ql)^{t\ge 0}$) whenever $j^*M\in D^b_cSh^{et}(U,\ql)^{t\le 0}$ and  
$i^*M\in D^b_cSh^{et}(Z,\ql)^{t\le 0}$ (resp. $j^*M\in D^b_cSh^{et}(U,\ql)^{t\ge 0}$ and  
$i^!M\in D^b_cSh^{et}(Z,\ql)^{t\ge 0}$).

III Let $g:X\to Y$ be a surjective morphism of (reasonable) schemes.
Then the following statements are valid.

1. $g^*$ is conservative.

2. If $g$ is smooth 
 (everywhere) of dimension $d$, then for $M\in \obj D^b_cSh^{et}(Y,\ql)$ we have: $M\in \obj D^b_cSh^{et}(Y,\ql)^{t\le 0}$ (resp.  $M\in \obj D^b_cSh^{et}(Y,\ql)^{t\ge 0}$) if and only if  $g^*[d](M)\in \obj D^b_cSh^{et}(X,\ql)^{t\le 0}$ (resp. $g^*[d](M)\in \obj D^b_cSh^{et}(X,\ql)^{t\ge 0}$).

IV Let 
 $g:X\to Y$ be a
 morphism such that for any irreducible component $Y_i$ of $Y$ the dimension of $g\ob(Y_i)$ equals $\dim Y_i+d$ (for some fixed $d\in \z$). 
Then the following statements are valid.

1.  $g^*[d]D^b_cSh^{et}(X,\ql)^{t\le 0}\subset D^b_cSh^{et}(X,\ql)^{t\le 0}$.

2. Moreover, $g^*$ is $t$-exact if $g$ is a universal homeomorphism.

3. Assume that $g$ is a projective limit of smooth affine morphisms.
Then $g^*[d]$ is $t$-exact.

V For $S$ of finite type over $\sfp$ ($p\neq l$), $D^b_cSh^{et}(S,\ql)$ is the category of constructible 
$\ql$-perverse sheaves as defined in \cite{bbd}, 
and $t$ is the corresponding perverse $t$-structure for the self-dual perversity.

\end{theo}
\begin{proof}
I.1. 
Theorem 16.2 of \cite{degcis} yields that $\dmcs$ is isomorphic to a certain triangulated category $DM_{h,c}(S)$ (of constructible $h$-motives). Moreover, Theorem 16.1.3 of ibid. yields that this isomorphism is compatible with all the functors required. 
Next, Theorem 5.9.21 of \cite{cdet} together with the discussion in \S5.9.19 yields the existence of the realization functor in question.

2. The compatibility of $\hetl(-)$ with the motivic image functors for finite type morphisms (see Proposition \ref{pcisdeg}(\ref{imotfun})) is given by loc. cit. The compatibility with $g^*$ for an arbitrary morphisms $g$ of reasonable schemes 
is immediate from the fact that the diagram (5.9.10a) of ibid. is a {\it premotivic adjunction}.

II It is stated in Theorem 6.3 of \cite{eke} that the results of \cite{bbd} carry over to $D^b_cSh^{et}(-,\ql)$ (over schemes that are called reasonable in this paper). Moreover, the properties II.1--7 of $t$ 
in a closely related setting are listed in \S2.6 of \cite{huper}.

8. By Theorem 6.3(iv) of \cite{eke},  the triangulated categories along with the  connecting functors mentioned yield a gluing data. 
By Proposition \ref{pglu}(II2) it remains to note that 
$i_*$ and $j^*$ are $t$-exact. 
 
III.1. The statement easily follows from its 
$\zlz$-coefficients analogue given by Proposition 9.1 of \cite{sga48}.
Note here: by definition, the categories $D^b_cSh^{et}(-,\ql)$ are defined as the rational hulls of certain $D^b_cSh^{et}(-,\zl)$, whereas the latter are equipped with conservative functors to $D^b_cSh^{et}(-,\zlz)$ (see Theorem 6.3(i) of \cite{eke} and \S2.2 of \cite{bbd}). Moreover, both of these "change of coefficient" functors are compatible with all the connecting functors of assertion I.2. 

2. Immediate from the previous assertion together with the $t$-exactness of $g^*[d]$ (see assertion I5). Note here that a $t$-exact conservative functor cannot kill non-zero objects in the heart of the corresponding $t$; hence we can apply Remark \ref{rts}(4).

IV1. Obvious from assertion II6.

2. We should verify the left $t$-exactness of $g^*$. Now, $g^*$ 
(in this case) possesses an inverse functor $g_*$ (see Theorem 1.1 of \cite{sga48}; note that it is sufficient to know this statement for
 complexes of $\zlz$-module sheaves). Hence it suffices to note that the functors $-_*$ possesses the base change property with respect to $-^!$ (by 
Corollary 3.1.12.3 of \cite{sga418}): for any cartesian diagram
\begin{equation}\label{ebch} \begin{CD} 
 x @>{i_X}>> X\\
@VV{g'}V @VV{g}V\\
y @>{i_Y}>> Y
\end{CD}\end{equation} 
we have $g'_*\circ i_{X}^!\cong i_{Y}^!\circ g_*$ (note again that it suffices to verify the latter fact for the categories $D^b_cSh^{et}(-,\zlz)$). 

3. Again, we should verify the  left $t$-exactness property.
 Let $y$ be a Zariski point of $Y$; consider the diagram (\ref{ebch}) again.
 Then we have $g'^*\circ i_{Y}^!\cong i_{X}^!\circ g^*$; this statement can be proved via reducing to the case of $\zlz$-coefficients and applying the argument used in the proof of Proposition 4.3.12 of \cite{degcis}.
 This reduces the statement to the case when $Y$ is (the spectrum of) a field. In the latter case it is an easy consequence of absolute purity (see Theorem XVI.3.1.1 of \cite{illgabb}).

V This is just a partial case of the definition of $\dshl$.


\end{proof}

\begin{rema}\label{rjk}
1. For a $K\in \sss$ (see \S\ref{scheme}) one can define the \'etale version of $j_K^!$ using the 'classical' method (see \S2.2.12 of \cite{bbd}): for its decomposition $K \stackrel{i_K}{\to}\overline{K}\stackrel{j_{\overline{K}}}{\to} S$ ($\overline{K}$ is the closure of $K$ in $S$) one should take $j_K^!=i_K^*\circ j_{\overline{K}}^!$. This definition is certainly compatible with its motivic version (see Remark 1.1.3(2) of \cite{brelmot}). 

2. Since no proof is provided in (the current version of) \S5.9.19 of \cite{cdet} for the existence of the \'etale realization of $S$-motives with values in the 
Ekedahl's version of  $D^b_cSh^{et}(S,\ql)$
 (for a general reasonable $S$), the author will describe certain alternative methods for establishing  Theorem \ref{tetre}.
 
 Firstly, one may restrict it to the case when the $l$-adic cohomological dimension of all the schemes in question is bounded (note that this a rather mild restriction in the case $l\neq 2$). Then one may proceed via applying Theorem 9.7 of \cite{ayet} (together with the results of \S16.1 of \cite{degcis} mentioned above; cf. also \S6 of \cite{kahnconj}). 

Another possibility (that allows to avoid any restrictions and possibly even generalize our results to non-reasonable schemes) is to consider the "h-version" of the realization functor; this is the version that was actually constructed in (Theorem 5.9.21 of) \cite{cdet} (for a general excellent Noetherian separated finite-dimensional $S$). Though the author does not know how to prove that the target category 
$D^b_c(S,\ql)^\natural$ of this functor is isomorphic to $D^b_cSh^{et}(S,\ql)$, there still exist the corresponding analogue of  $D^b_cSh^{et}(S,\zl)$ equipped with the exact functor to ("the usual") $D^b_cSh^{et}(S,\zlz)$ (see Corollary 5.4.6 of ibid.). Moreover,  these categories and the natural functors between them possess all the properties of $l$-adic sheaves that were established in \cite{eke}. So, one can assume that Theorem \ref{tetre} is fulfilled for this "modified" realization.

For both of these possibilities the category $D^b_cSh^{et}(S,\ql)$ coincides with the one considered in \cite{bbd} if $S$ is of finite type over $\sfp$ (see Proposition 5.9.18 of \cite{cdet}); this is very important for the weight arguments of the next paragraph.

3. 
Note that we will not really need parts II2--3 of our theorem below, and we will only need part II4 in the (trivial) case of finite extensions of fields.
\end{rema}

\subsection {On weights for 
perverse $S$-sheaves; the degeneration of Chow-weight spectral sequences for \'etale homology}\label{swsshetl}

Below we will need certain 'weights' for $\hetl(-)$. 
First we recall that in certain cases weights are defined on $\dhsl$. 

\begin{pr}\label{phuwe}

I Let $S$ be a finite type (separated) $\sfp$-scheme (for a prime $p\neq l$).

Then there exist certain $\dhsl_{w\le 0},\ \dhsl_{w\ge 0}\subset \obj \dhsl$,  that satisfy the following properties.

\begin{enumerate}

\item\label{itrus} For any $m\in \z$ denote $\dhsl_{w\le 0}[m]$ by $\dhsl_{w\le m}$ and  denote 
$\dhsl_{w\ge 0}[m]$ by $\dhsl_{w\ge m}$. 

Then for $X\in \obj \dhsl$ we have: $X\in \dhsl_{w\le m}$ (resp. $X\in \dhsl_{w\ge m}$) if and only if for any $j\in \z$ we have $H^t_j(X)\in \dhsl_{w\le m+j}$ (resp. $H^t_j(X)\in \dhsl_{w\ge m+j}$).

\item\label{iperp} 
 Denote $\dhsl_{w\le m}\cap \dhsl^{t=0}$ by $\shs_{w\le m}$,  and  denote 
$\dhsl_{w\ge m}\cap \dhsl^{t=0}$ by $\shs_{w\ge m}$; $\shs_{w=m}=\shs_{w\le m}\cap \shs_{w\ge m}$. Then  $\shs_{w=m}$ yield exact abelian subcategories of $\shs$ that contain all $\shs$-subquotients of their objects. Besides,
for $j\neq m\in \z$ we have $\shs_{w=m}\perp \shs_{w=j}$. 

\item\label{inti}
For any open immersion $j:U\to S$ and $m\in \z$  (the perverse sheaf version of) the functor $j_{!*}$ sends $\shs(U)_{w\le m}$ into $\shs_{w\le m}$, and   sends $\shs(U)_{w\ge m}$ into $\shs_{w\ge m}$.

\item\label{ihuw} $\hetl$ is 'weight-exact' i.e. it sends $\dmcs_{\wchow\le m}$ into  $\dhsl_{w\le m}$, and sends $\dmcs_{\wchow\ge  m}$ into  $\dhsl_{w\ge m}$. 

\end{enumerate}

II Let  $S$ be a finite type (separated) $\sq$-scheme. Present $S$ as an inverse limit of finite type $\szol$-schemes $S_i$ (with connecting morphisms being open embeddings), and define $\hetlw(S)$ as the direct limit of $\hetl(S_i)$ (its target $\dhslw$ is the $2$-colimit of the corresponding $\dhsli$). Then $\hetl(S)$ can be factored through $\hetlw(S)$. Moreover, $\dhslw$ possesses a (perverse) $t$-structure that is compatible with $t$ (with respect to this connecting functor). Lastly, for $\dhslw$ one can define weights such that the analogues of all of the assertions of part I are fulfilled.

\end{pr}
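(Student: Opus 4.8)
The plan is to reduce Part II (the mixed-characteristic-zero case) to Part I (the positive-characteristic case) via the standard ``spreading out'' formalism, using Proposition \ref{pcisdeg}(\ref{icont}) for motives and its \'etale-sheaf analogue. First I would set up the inverse system: write $S$ as $\prli S_i$ where each $S_i$ is a finite type separated $\szol$-scheme and the connecting morphisms are affine (and eventually dominant), so that $\hetl_{S}$ factors as $\hetlw = \inli \hetl_{S_i}$ followed by the comparison functor from $\dhslw = \operatorname{2-colim} \dhsli$ to $\dhsl$. The existence of the perverse $t$-structure on each $\dhsli$ over the $\szol$-base is classical (\cite{bbd}); one checks that the transition functors $(-)^*$ are $t$-exact up to shift (they are $\mathbb{Q}_l$-\'etale pullbacks along smooth, in fact affine, morphisms, hence exact for the perverse $t$-structure after the appropriate normalization), so the perverse $t$-structures glue to a $t$-structure on the $2$-colimit $\dhslw$. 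Compatibility with $t$ on $\dhsl$ (via the comparison functor) then follows because that functor is a filtered colimit of $t$-exact functors, hence $t$-exact; and $\hetl_S$ factors through $\hetlw$ simply because motivic pullback commutes with $\hetl(-)$ by Proposition \ref{pcisdeg}(\ref{ietre}) and motives over $S$ are the colimit of motives over the $S_i$ by \ref{pcisdeg}(\ref{icont}).

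Next I would transport the ``weights'' of Part I to $\dhslw$ by the same colimit procedure. For each $i$, Part I gives subclasses $\dhsli_{w\le 0},\dhsli_{w\ge 0}$; I would \emph{define} $\dhslw_{w\le m}$ (resp. $\dhslw_{w\ge m}$) to consist of objects of $\dhslw$ that are represented, for some (equivalently, all sufficiently large) $i$, by an object of $\dhsli_{w\le m}$ (resp. $\dhsli_{w\ge m}$). To see this is well-defined and satisfies the analogue of \ref{phuwe}(I.\ref{itrus}--\ref{ihuw}), one uses that the connecting functors $(-)^*$ between the $\dhsli$ are weight-exact for the weight structures of Part I — this is the sheaf-level shadow of Theorem \ref{twchow}(II.2,3), and should also follow directly from the construction of weights in \cite{bbd}, \S5, since pullback along a smooth morphism of the base preserves purity. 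Granting weight-exactness of the transition functors, each of the four assertions in Part I is a ``colimit-stable'' statement: the characterization of $\dhslw_{w\le m}$ via the $t$-homology objects $H^t_j$ passes to the colimit since $t$-homology commutes with the filtered colimit; the orthogonality and abelianness of $\shsl_{w=m}$ at each finite level survive because $\operatorname{Hom}$ in $\dhslw$ is the colimit of the $\operatorname{Hom}$'s; $j_{!*}$ at the colimit level is the colimit of the $j_{!*}$'s (intermediate extension is defined from $j_!,j_*$ and the perverse $t$-structure, all of which are compatible with the colimit), so it preserves the weight ranges; and weight-exactness of $\hetlw$ follows from weight-exactness of each $\hetl_{S_i}$ together with the compatibility of $\wchow$ over $S$ with the $\wchow$'s over the $S_i$ via $(-)^*$ (again Theorem \ref{twchow}(II.3), using that $S\to S_i$ is a limit of smooth affine morphisms).

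The main obstacle I anticipate is the bookkeeping needed to ensure that all the structures ($t$-structure, weights, $j_{!*}$, the functors $j_!,j_*,i_*,i^*,i^!$, and $\hetl$) are \emph{mutually compatible} with the $2$-colimit, rather than each one separately; in particular one must choose the spreading-out system carefully (e.g. so that an open-closed decomposition $Z\hookrightarrow S \hookleftarrow U$ spreads out to compatible decompositions of the $S_i$) and verify that the comparison functor $\dhslw\to\dhsl$ is an equivalence onto a suitable subcategory — or at least faithful and $t$-exact and weight-exact — so that properties proved in $\dhslw$ genuinely descend to statements about $\hetl_S$ as used in \S\ref{swsshetl}. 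A secondary technical point is checking that the transition functors are $t$-exact and weight-exact \emph{on the nose} (with the correct Tate twists/shifts built into $\hetl$), for which one invokes that $S_{i+1}\to S_i$ can be taken smooth affine and that perverse $t$-exactness and weight-exactness of smooth pullback are standard (\cite{bbd}, and the reasoning behind Theorem \ref{twchow}(II.2)). Once these compatibilities are in place, each of assertions \ref{phuwe}(I.\ref{itrus})--\ref{phuwe}(I.\ref{ihuw}) follows by applying the Part I statement at a finite level and passing to the colimit, which I expect to be routine.
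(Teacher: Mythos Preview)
There is a genuine gap in your reduction of Part~II to Part~I. You write ``For each $i$, Part~I gives subclasses $\dhsli_{w\le 0},\dhsli_{w\ge 0}$,'' but Part~I is stated only for finite type $\sfp$-schemes, whereas the $S_i$ in Part~II are finite type $\szol$-schemes --- arithmetic, mixed-characteristic objects. The Deligne weight formalism of \cite{bbd}, \S5 (which is what Part~I records) is set up over finite fields and does not apply to the $S_i$ directly. So your colimit procedure has no input at the finite levels: you are taking a $2$-colimit of weight structures that have not yet been constructed.

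This is not a bookkeeping issue that would resolve itself once the compatibilities are checked; defining weights for perverse $\ql$-sheaves over arithmetic bases is precisely the content of Huber's paper \cite{huper}, which the paper's proof simply cites. Huber's construction declares a perverse sheaf on an arithmetic $S_i$ to have weight $\le m$ (resp.\ $\ge m$) when its pullback to every closed fibre $S_i\times_{\szol}\spe\mathbb{F}_q$ does, in the sense of Part~I, and then verifies the analogues of assertions (\ref{itrus})--(\ref{inti}) on that basis. Once Huber's weights are in place on the $S_i$, your colimit argument is essentially how one passes from the arithmetic models to the generic fibre $S/\sq$, and is close in spirit to how \cite{huper} and \S3.4 of \cite{brelmot} are organized. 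But the honest reduction is \emph{to Huber's theory on the $S_i$}, not to Part~I; the genuine use of Part~I occurs one level deeper, on the closed fibres of the $S_i$, and that step (together with checking that the resulting ``horizontal'' notion of weight has the listed properties) is real work that your outline does not supply.
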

\begin{proof}

I  All  the assertions except (\ref{ihuw}) are well-known properties of weights of mixed complexes of sheaves that were established in \S5 of \cite{bbd}, 
whereas assertion \ref{ihuw} was verified in \S3.6 of \cite{brelmot}.

II 
The $t$-exactness of the connection functor $\dhslw\to\dhsl$ is immediate from Theorem \ref{tetre}(IV3). 
Everything else was verified in \S3 of \cite{huper}, except the analogue of assertion 
I.\ref{ihuw} that was established in \S3.4 of \cite{brelmot} (in this case). 

\end{proof}

\begin{rema}\label{lrnowsmcsh}

1. For $S$ being of finite type over $\sfp$ 
 there exists a weight filtration (in the sense of Definition \ref{dwfilt}) for the heart of  the perverse $t$-structure for the category $\dbm(X,\ql)$ of mixed complexes of sheaves; see Theorem 5.3.5 of \cite{bbd}.

 On the other hand, the corresponding 'pure factors' $\aum$ are not semisimple. Indeed, there are non-trivial $1$-extensions of pure (perverse) sheaves even in the case when $S=\sfp$ (since for 'abstract' pure Galois representations of $\gal(\fp)$ the action of the Frobenius does not have to be semisimple due to the fact that one cannot impose any polarizability restrictions on these representations). 
 Since  all $\cu$-extensions 
 in the heart of a weight structure for $\cu$ necessarily split (immediately from the orthogonality axiom), this weight filtration does not yield a weight structure for this category.  

The author made an attempt to 'axiomatize' this setting by introducing the notion of a {\it relative weight structure}; see \S3.5--3.6 of \cite{brelmot}.


2. For $S$ being of finite type over $\sq$ the 
category of mixed perverse sheaves $\dhslw$ does not possess a weight filtration (in our sense) at all; cf. the Warning preceding Proposition 3.4 of \cite{huper}.
 The problem here is that (due to the non-vanishing of the corresponding extension groups) one can construct a mixed perverse sheaf whose pure factors are 'in the wrong order'. 
Besides, note that $\dhslw$ is not isomorphic to $\dhsl$; see the 
end of \S1 of ibid.

3. So, in both of these cases we do not have (a 'true') weight structure for 
(any sort of) triangulated category of complexes of mixed $S$-sheaves; hence the properties of (relative) motives are somewhat better than the ones of mixed complexes of sheaves (even over a finitely generated base). 

\end{rema}


Now we prove the main properties of Chow-weight spectral sequences for $\hetlz$.
To this end  we state the following conjecture.

\begin{conj}\label{cdeg}
The spectral sequence $T_{\wchow}(\hetlz,M)$ degenerates at $E_2$ for any $M\in \obj \dmcs$ (for any reasonable $S$).
\end{conj}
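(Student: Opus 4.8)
The plan is to prove the conjecture for \emph{equicharacteristic} $S$; this is all that the present methods yield, since the argument rests entirely on Proposition~\ref{phuwe} --- i.e.\ on a weight theory for $\ql$-adic complexes with its orthogonality of distinct weights --- and such a theory (with its purity input) is available only when $S$ is equicharacteristic (in characteristic $0$ one first passes to $\dhslw$ by the limit trick of Proposition~\ref{phuwe}(II)). For a general reasonable $S$ the statement must stay conjectural precisely because this input is missing.

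So fix an equicharacteristic $S$ and $M\in\obj\dmcs$, and write $\hetlz=H^t_0\circ\hetl$ for perverse \'etale homology; the spectral sequence in question is $T=T_{\wchow}(\hetlz,M)$. By Proposition~\ref{pwss} it has $E_1^{pq}=H^t_q\bigl(\hetl(M^p)\bigr)$, where the objects $M^p$ are the terms of a weight complex of $M$ and hence lie in the heart $\chows$ of $\wchow$. The first step is to place the $E_1$-terms in the weight heart of $\shs$: since $\hetl$ is weight-exact (Proposition~\ref{phuwe}(\ref{ihuw})) and $M^p$ is a Chow motive, $\hetl(M^p)\in\dhsl_{w\le 0}\cap\dhsl_{w\ge 0}$; then Proposition~\ref{phuwe}(\ref{itrus}) gives $H^t_q\bigl(\hetl(M^p)\bigr)\in\shs_{w\le q}\cap\shs_{w\ge q}=\shs_{w=q}$. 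Thus the $q$-th row of the $E_1$-page is a complex in the exact abelian subcategory $\shs_{w=q}\subset\shs$ (Proposition~\ref{phuwe}(\ref{iperp})).

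The second step is the degeneration. Since $\shs_{w=q}$ contains all $\shs$-subquotients of its objects (Proposition~\ref{phuwe}(\ref{iperp})), every $E_r^{pq}$ with $r\ge1$ --- a subquotient of $E_1^{pq}$ --- again lies in $\shs_{w=q}$. For $r\ge2$ the differential $d_r\colon E_r^{pq}\to E_r^{p+r,\,q-r+1}$ is then a morphism in $\shs$ from an object of $\shs_{w=q}$ to an object of $\shs_{w=q-r+1}$, and as $q\ne q-r+1$ the orthogonality $\shs_{w=q}\perp\shs_{w=q-r+1}$ of Proposition~\ref{phuwe}(\ref{iperp}) forces $d_r=0$. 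Hence $T$ degenerates at $E_2$; since $M$ was arbitrary, $T_{\wchow}(\hetl,-)$ degenerates at $E_2$ over such $S$.

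It remains to handle a few routine matters: in characteristic $0$ one runs the above for $\hetlw\colon\dmcs\to\dhslw$, using that the connecting functor $\dhslw\to\dhsl$ is $t$-exact and that $\dhslw$ carries weights satisfying the analogues of Proposition~\ref{phuwe}(I) (Proposition~\ref{phuwe}(II)), and then descends; and one checks that the identification of $E_1^{pq}$ is compatible with the conventions for $[q]$ and $H^t_\bullet$ and that only the bare degeneration of each single $T_{\wchow}(\hetlz,M)$ is used (so the $E_2$-functoriality in Proposition~\ref{pwss}(I) plays no role). The sole genuine obstacle lies not in this argument but in its hypothesis: passing to mixed characteristic would require a weight structure on $\dhsl$ there together with its orthogonality property, which is exactly what is not presently known --- whence the conjectural formulation for arbitrary reasonable $S$.
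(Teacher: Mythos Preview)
Your core argument is exactly the paper's: place $E_1^{pq}$ in $\shs_{w=q}$ via weight-exactness of $\hetl$, note that $E_r^{pq}$ stays in $\shs_{w=q}$ since this class is closed under subquotients, and kill $d_r$ for $r\ge 2$ by the orthogonality $\shs_{w=q}\perp\shs_{w=q'}$ for $q\neq q'$. You also correctly identify that only the equicharacteristic case is accessible; the paper proves precisely this as Theorem~\ref{tdeg}(II1) and leaves the mixed-characteristic case open.

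There is, however, a genuine gap. Proposition~\ref{phuwe} --- your sole source for weights on $\dhsl$ --- is stated only for $S$ of \emph{finite type over $\sfp$} (part I) or \emph{finite type over $\sq$} (part II). A ``reasonable equicharacteristic scheme'' in the sense of the paper is of finite type over some regular excellent $S_0$ of dimension $\le 1$, and $S_0$ need not be of finite type over the prime field (e.g.\ $S_0=\spe k$ for a large field $k$). So you cannot invoke Proposition~\ref{phuwe} directly, and the ``limit trick'' you mention in the last paragraph is not the right one: in Proposition~\ref{phuwe}(II) the limit presents a finite-type $\sq$-scheme as a limit of $\szol$-schemes in order to import BBD weights --- it does not reduce a general equicharacteristic $S$ to the finite-type-over-prime-field case.

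The paper fills this gap with a nontrivial reduction (Theorem~\ref{tdeg}(II1), first three paragraphs): pass to an affine Zariski cover (using that $f^*$ is $t$-exact and conservative, and that Chow-weight spectral sequences are compatible with smooth pullback); then apply Popescu's theorem to write the affine $S_0$ as a filtered limit of regular schemes of finite type over $\sfp$ or $\sq$ with smooth affine transition maps; finally use continuity of $\dmc(-)$ (Proposition~\ref{pcisdeg}(\ref{icont})) to descend $M$ to some finite level, where Proposition~\ref{phuwe} applies. This reduction is not routine --- Popescu's theorem is a serious input --- and without it your argument only treats $S$ of finite type over the prime field.
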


\begin{theo}\label{tdeg}

I.1.  Let $S$ be a finite type separated scheme over $\sfp$, $M\in \obj\dmcs$, $H=\hetlz$. Then $E_s^{pq}T_{\wchow}(H,M)\in \shs_{w=q}$ 
 for any $p,q\in\z$, $s>0$.   
Besides, $(W_mH)(M)$ is a  filtration of $H(M)$ whose $m$-th factor belongs to $\shs_{w=m}$ (for all $m\in \z$). Moreover, this filtration is uniquely and functorially characterized by the latter property.

2. For $S$ being separated of finite type over $\sq$ and  $H=\hetlwz$ the (obvious) analogue of assertion I.1 is fulfilled. 

II Let $S$ be a characteristic $p$ 
very reasonable scheme  ($p\neq l$; it can be $0$).

Then the following statements are valid.

1. Conjecture \ref{cdeg} holds. 

2. Let $j:U\to S$ be an open 
embedding; denote the complementary closed embedding by $i$. For $M\in \dmcu_{\wchow\ge s}$ (resp. $M\in \dmcu_{\wchow\le s}$)  suppose that $\hetlm(M)=0$ for all $m\neq 0$.  

 Then 
$(W_{s+m+1}\hetlm)(i^!j_!M)= 0$ for any $m >0$  (resp. $(W_{s+m}\hetlm)(i^*j_{*} M)=\hetlm(i^*j_{*} M)$ for any $m< 0$). 
\end{theo}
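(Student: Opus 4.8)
The strategy is to deduce everything from the degeneration of Chow-weight spectral sequences together with the gluing formalism for weights. For part I1, I would apply Proposition~\ref{pwss} and Proposition~\ref{pdeg} with $H=\hetlz$, using that the functor $H=H^t_0\circ\hetl$ factors through the perverse homology of the \'etale realization. The entries $E_1^{pq}=H_q(X^p)$ with $X^p\in\dmcs_{\wchow=0}=\chows$ are, by the weight-exactness of $\hetl$ (Proposition~\ref{phuwe}(I\ref{ihuw})), objects of $\shs$ that are pure of weight~$0$ after the appropriate shift; more precisely $\hetl(X^p)\in\dhsl_{w=0}$, so $H^t_q(\hetl(X^p))\in\shs_{w=q}$ by Proposition~\ref{phuwe}(I\ref{itrus}). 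Since the subcategories $\shs_{w=q}$ are abelian, closed under subquotients, and mutually orthogonal for distinct weights (Proposition~\ref{phuwe}(I\ref{iperp})), all differentials on pages $s\ge 1$ between terms of different weight vanish automatically, which already forces $E_s^{pq}\in\shs_{w=q}$ and moreover forces degeneration at $E_2$; the statement about the weight filtration $(W_mH)(M)$ and its functorial characterization is then exactly Proposition~\ref{pdeg}(I) (applied with the weight filtration on $\hrt=\shs$ induced by $\wchow$ via the spectral sequence). Part I2 is the verbatim analogue, replacing $\dhsl$ and $\hetl$ by $\dhslw$ and $\hetlw$ of Proposition~\ref{phuwe}(II), using that the connecting functor $\dhslw\to\dhsl$ is $t$-exact.

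\textbf{Part II1.} To prove Conjecture~\ref{cdeg} for equicharacteristic $S$, I would reduce to the statement that $T_{\wchow}(\hetl,M)$ degenerates at $E_2$ for all $M\in\obj\dmcs$. By Proposition~\ref{pwss}(I) the spectral sequence is $\cu$-functorial from $E_2$ on, so it suffices to check degeneration after composing $\hetl$ with a conservative family of exact functors. The perverse homology functors $H^t_j:\dhsl\to\shs$ form such a conservative collection (boundedness of the perverse $t$-structure), so $T_{\wchow}(\hetl,M)$ degenerates iff each $T_{\wchow}(H^t_j\circ\hetl,M)=T_{\wchow}(\hetlz,M[-j])$ does; but that is precisely part~I1 (resp.~I2). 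Hence $T_{\wchow}(\hetl,M)$ degenerates at $E_2$, which is Conjecture~\ref{cdeg} in the equicharacteristic case.

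\textbf{Part II2.} Here the point is to transfer the weight bound across the gluing triangle $i_*i^!j_!M\to j_!M\to j_*j^*M\cong j_*M$ (Definition~\ref{dglu}(1)(iv), Proposition~\ref{pcisdeg}(\ref{iglu})). By Theorem~\ref{twchow}(II.1) the functor $j_!$ is left weight-exact, so $M\in\dmcu_{\wchow\ge s}$ gives $j_!M\in\dmcs_{\wchow\ge s}$; applying the weight-exact $\hetl$ and then $H^t_m$ and using that $\hetlm(M)=0$ for $m\neq0$ controls $\hetlm(j_!M)$ via the realization long exact sequences. The hypothesis $\hetl(i^!j_!M)$ being concentrated in a single perverse degree (plus a shift compatible with the triangle) lets me identify $\hetlm(i^!j_!M)$ as a subquotient of $\hetl$ applied to objects of $\dmcs_{\wchow\ge s}$; by the degenerate spectral sequence of part~I1 (via Proposition~\ref{pwss}(II) and Proposition~\ref{pdeg}) the weight filtration on such objects is concentrated in weights $\ge s$ in the relevant range, which yields $(W_{s+m+1}\hetlm)(i^!j_!M)=0$ for $m>0$. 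The dual statement $(W_{s+m}\hetlm)(i^*j_*M)=\hetlm(i^*j_*M)$ for $m<0$ follows by the analogous argument with $j_*$ right weight-exact and $i^*$ right weight-exact (Theorem~\ref{twchow}(II.1)), using the other gluing triangle $i_*i^*j_*M[-1]\to j_!M\to j_*M$ type computation, or simply by passing to the opposite category and Proposition~\ref{pbw}(\ref{idual}).

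\textbf{Main obstacle.} I expect the delicate point to be part~II2: one must carefully match the single-degree concentration hypothesis on $\hetl(i^!j_!M)$ with the weight filtration produced by the (degenerate) Chow-weight spectral sequence, keeping the shift bookkeeping between homological weight $m$ and the index $s+m+1$ consistent, and checking that the relevant $E_\infty$-steps of the filtration are the ones claimed. The degeneration input from part~I is what makes the weight filtration behave functorially and strictly under the gluing morphisms (Proposition~\ref{pwss}(II.2)), so the argument is really about applying that strictness to the connecting maps in the gluing triangle; isolating exactly which strictness statement is needed, and verifying it applies to $i^!j_!M$ rather than merely to objects in the image of $\hetl$, is the step requiring the most care.
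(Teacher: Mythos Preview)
Your treatment of part~I is essentially the paper's argument, and is fine. The substantive gaps are in part~II.

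\textbf{Part II1.} You invoke I1 (resp.~I2) directly, but those statements are only proved for $S$ of \emph{finite type} over $\sfp$ (resp.~$\sq$), whereas II1 is asserted for an arbitrary reasonable equicharacteristic scheme. The weights of Proposition~\ref{phuwe} are simply not available over a general such $S$, so the orthogonality argument cannot be run there. The paper supplies a genuine reduction step you have omitted: first pass to an affine cover (using that $f^*$ for an affine Zariski cover is $t$-exact and conservative, together with weight-exactness of smooth $f^*$ so that the spectral sequences agree); then write the base $S_0$ as a filtered inverse limit, with smooth affine transition maps, of schemes of finite type over $\sfp$ or $\sq$ via Popescu's theorem; finally use continuity (Proposition~\ref{pcisdeg}(\ref{icont})) and Theorem~\ref{twchow}(II3) to descend $M$ and its weight spectral sequence to one of these finite-type models. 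Only then does I1/I2 apply.

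\textbf{Part II2.} Your key step ``$j_!$ is left weight-exact, so $M\in\dmcu_{\wchow\ge s}$ gives $j_!M\in\dmcs_{\wchow\ge s}$'' is wrong: left weight-exact means $j_!$ preserves $\cu_{w\le 0}$, not $\cu_{w\ge 0}$ (Definition~\ref{dwstr}(V)), and $j_!$ is \emph{not} right weight-exact for open immersions. So you cannot control the Chow-weight of $j_!M$, and the rest of your sketch (``identify $\hetlm(i^!j_!M)$ as a subquotient of $\hetl$ applied to objects of $\dmcs_{\wchow\ge s}$'') has no foundation. The paper's argument is quite different: after the same reduction to finite type as above, it works entirely on the sheaf side. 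One uses Proposition~\ref{pglu}(IV\ref{iim}) to identify $i^!j_{!*}(\hetl M)\cong \tau_{\ge 1}i^!j_!(\hetl M)$, so that for $m>0$ one has $\hetlm(i^!j_!M)\cong H^t_m(i^!j_{!*}\hetl M)$; then one invokes the BBD fact that $j_{!*}$ preserves weights of perverse sheaves (Corollary~5.3.2 of \cite{bbd}, resp.\ \cite{huper} in characteristic~$0$) together with $i^!$ preserving $w\ge s$, to conclude that $\hetlm(i^!j_!M)\in\shs_{w\ge s+m}$. Part~I then translates this into the vanishing of the stated piece of the Chow-weight filtration. The dual statement uses $i^*j_{!*}\cong \tau_{\le -1}i^*j_*$ analogously. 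Your proposed route via motivic weight-exactness of $j_!,j_*$ and the gluing triangle does not produce the needed bound.
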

\begin{proof}

I.1. Proposition \ref{phuwe}(I.\ref{ihuw}) yields that $E_1^{pq}T_{\wchow}(\hetlz,M)\in \shs_{w=q}$. 

Hence the same is true for $E_s^{pq}(T)$ for any $s\ge 1$ (since
$E_s^{pq}$ is a subfactor of $E_1^{pq}(T)$; here we apply Proposition \ref{phuwe}(I.\ref{iperp})). 

Hence 
$E_{\infty}^{p+q}(T)\in \shs_{w=q}$ also, and we obtain that the factors of the Chow-weight filtration are of the 
weights prescribed. Now, the orthogonality of (subquotients of) perverse sheaves of distinct weight yields that 
this condition determines the filtration in a functorial way. 

The same argument proves assertion I2.

II.1. We verify the degeneration of $T_{\wchow}(\hetlz,M)$ for some fixed $M\in \obj \dmcs$ via reducing it to the case when $S$ is of finite type over the corresponding prime field (in three steps).
In each of these reduction steps we will apply Remark \ref{rwss}(3). 

Let $f:S'\to S$ be a smooth surjective morphism as in Definition \ref{dmor}(4). Note: we can assume that $f$ is equidimensional and that all the connected components of $S'$ are irreducible.
Now, it suffices to verify the degeneration for 
$T_{\wchow(S')}(\hetlz(S'),f^*M)$ instead of $T_{\wchow(S)}(\hetlz(S),M)$.
Indeed, the \'etale $f^*$ is $t$-exact (up to a shift) and conservative (see Theorem \ref{tetre}(II.5, III.1)) whereas the motivic $f^*$ is $\wchow$-weight exact (see Theorem \ref{twchow}(II2)); hence we can construct a diagram of the type (\ref{edeg}) and apply Remark \ref{rwss}(3) (for the first time). Moreover, it suffices to verify the degeneration of $T_{\wchow}$ for the restrictions of $f^*M$ to all of the connected components of $S'$. For this reason we will assume below that $S'$ is irreducible.
 
Next for $S'=\inli S'_i$ 
 we apply Proposition \ref{pcisdeg}(\ref{icont}) in order to 
find an index  $i$ such that $f^*M=g^*M_i$
for the corresponding $g:S'\to S'_i$ and some $M_i\in \obj\dmc(S'_i)$.
  Since the \'etale $g^*$ is $t$-exact up to a shift (see Theorem \ref{tetre}(IV3)) and the motivic one is weight-exact (see Theorem \ref{twchow}(V5)), we reduce the statement to the degeneration of $T_{\wchow(S'_i)}(\hetlz(S'_i), M_i)$.

 Next, since $S_i'$ is of finite type over a field, there exists a finite extension of $\sfp$ or $\sq$ (in the case $p=0$) such that $S_i'$ is defined over it.
 Hence (by Proposition \ref{pcisdeg}(\ref{icont})) there exists a morphism $h:S'_i\to S''$ that satisfies the following properties: $S''$ is of finite type over the corresponding prime field, $M'=h^* M''$ for some $M''\in\obj\dmc(S'')$, and $h$ is the composition of a pro-finite 
 universal homeomorphism with  a projective limit of smooth affine morphisms. Hence (by the same arguments as above; see also Theorem \ref{tetre}(IV2)) it suffices to verify the statement for 
$T_{\wchow(S'')}(\hetlz(S''),M'')$

Now consider the case $p>0$ (i.e.
$S'$ is of finite type over $\sfp$).
In this case assertion I.1 
yields that $E_s^{pq}T_{\wchow(S'')}(\hetlz(S''),M'')\in Sh_{per}^{et}(S'')_{w=q}$ (for any $p,q\in \z$, $s>0$). 
Now (by Proposition \ref{phuwe}(I.\ref{iperp})) there are no non-zero morphisms between 
distinct  $Sh_{per}^{et}(S'')_{w=m}$. Hence 
all the connecting morphisms for $E_s(T)$ vanish for all $s>1$, and we obtain the result.

In the case when $S$ is of finite type over $\sq$ we note that the same argument proves the degeneration of $T_{\wchow(S'')}({\tilde H}^{et}_{\ql,0}(S''),M'')$; hence the functoriality of Chow-weight spectral sequences (with respect to $H$; see Proposition \ref{pwss}(I)) yields the assertion desired.


2. The same reduction arguments as above (along with Proposition \ref{pcisdeg}(\ref{iexch}--\ref{iexche})
 enable us to assume that  $S$ is of finite type over $\sfp$ (for $p\neq 0$) or over $\sq$. 
In this case 
Theorem \ref{tetre}(I,II) along with assertion I allow us to translate
our assertion into the corresponding analogue for weights on 
$\dhsl$. 
Now, for $S/\sfp$ we have
$i^!\dhsl_{w\le s}\subset \dhzl_{w\le s}$ and $i^*\dhsl_{w\ge s}\subset \dhzl_{w\ge s}$ by (\S5.1.14, (i*) and (i), of) \cite{bbd}.   Proposition \ref{pglu}(IV\ref{iim}) yields:  it suffices to note that $j_{!*}$ respects weights of mixed 
 sheaves; this is Corollary 5.3.2 of \cite{bbd}. 

For $S/\sq$ it suffices to verify the assertion for $\hetlmw$ instead of $\hetlm$. In this setting we can apply the Remark succeeding Definition 3.3 
and  Corollary 3.5 of \cite{huper} (instead of  the results of \cite{bbd} cited above).


\end{proof}

\begin{rema}\label{rdeg} 
1. Using Verdier duality (for motives or sheaves) one can easily 
carry over 
the results above from \'etale homology to \'etale cohomology. 

2. In the characteristic $0$ case of Theorem \ref{tdeg}(II), we could have tried to use M. Saito's Hodge modules in our weight arguments (in order to avoid the usage of $\hetlw$). The main problem here is that (to the knowledge of the author) no 'Hodge module realization' of motives is known to exist at the moment (still see the proof of Proposition 7.6 of \cite{wildiat} for a certain reasoning avoiding this difficulty). 

Alternatively, one 
could try to reduce the characteristic $0$ case to the positive characteristic one using the methods and results of \S6 of \cite{bbd}. 

\end{rema}

\section{On the existence of a (nice) motivic $t$-structure}\label{smain}

In \S\ref{sdmts} we define a (motivic) $t$-structure $t_l$ for $\dmcs$ as the one that is strictly compatible with the perverse $t$-structure for the $\ql$-\'etale homology (cf. \S2.10 of \cite{beilnmot}). We also study the functoriality of this definition. 

In \S\ref{spmts} 
we reduce the existence of $t_l$ to the case when $S$ is the (spectrum of) a universal domain (of characteristic distinct from $l$). Moreover, the existence of $t_l$ over universal domains automatically yields  that Chow-weight filtrations and Chow-weight spectral sequences can be lifted from $\shs$  to motives. When $S$ is a very reasonable scheme, the weight filtration for $\hrtl$ obtained this way is strictly compatible with morphisms. 

In \S\ref{scons}
we study 
certain properties of motives that follow from the {\it niceness} of $t_l$ (i.e. from its transversality with $\wchow$).

In \S\ref{snice} we apply these results (in a certain Noetherian induction step).
We prove that a nice $t_l$ 
exists over an arbitrary very reasonable scheme $S$ of characteristic $p$ if 
such a $t_l$ exists over some universal domain of the same characteristic. 


\subsection{The motivic $t$-structure (for 
\texorpdfstring{$S/\szol$}{S/zol})} \label{sdmts}

Till \S\ref{sindl} we will fix some prime $l$, and will usually 
assume that all the schemes we consider are $\szol$-ones. In this case we will define the motivic $t$-structure in terms of $\hetl$; we will treat the question whether it actually depends on $l$ later.

\begin{defi}\label{dmts}

Let $S$ be a (reasonable) scheme. 

1. Consider the 
class $\dmcs^{t_l\le 0}$ (resp. $\dmcs^{t_l\ge 0}$) consisting of those $M\in \obj\dmcs$
that satisfy:  $\hetl(M)\in \dshsl^{t\le 0}$ (resp. $\hetl(M)\in \dshsl^{t\ge 0}$; see Theorem \ref{tetre}).

2. For a $\szol$-scheme $S$ if $(\dmcs^{t_l\le 0},\ \dmcs^{t_l\ge 0})$ yield a 
$t$-structure for $\dmcs$, we will say that (the $t$-structure) $t_l$ exists for $\dmcs$, or that it exists over $S$. We will denote the heart of $t_l$ (in this case) by $\mm(S)$. 

 
 3. We will use the term "(left, right, or both) $t$-exact functor" for functors between certain
 $\dmc(-)$ that respect (the 'halves of') $t_l$ in the corresponding way without (necessarily) assuming that $t_l$ yields a $t$-structure. 
 

4. If 
$t_l$ exists for $\dmcs$, we will say that it is {\it nice} if
it is transversal to $\wchow$.

\end{defi}

\begin{rema}\label{rbound}

If 
$t_l$ exists over $S$, then it is automatically bounded, since the \'etale homology of any object of $\dmcs$ is. The latter fact is immediate from  Proposition \ref{pcisdeg}(\ref{igenc}) and Theorem  \ref{tetre}(I,II).

In particular, we obtain that $t_l$ is non-degenerate (see Remark \ref{rts}(4)) and that $\hetl$ is conservative. Note that the existence of $t_l$ is a very strong assumption!

\end{rema}

We will need some functoriality properties of $(\dmc(-)^{t_l\le 0},\ \dmc(-)^{t_l\ge 0})$ below; certainly, they become even more interesting (for themselves) if $t_l$ exists.

\begin{lem}\label{lfumts}
Let $f:X\to Y$ be a morphism of 
schemes. Then the following statements are valid.

1. If $f$ is an immersion, then $f_*$ and $f^!$ are left $t$-exact, whereas  $f_!$ and $f^*$ are right $t$-exact (see Definition \ref{dmts}(3)).

2. If $f$ is affine, then $f_!$ is left $t$-exact, and  $f_*$ is right $t$-exact.

3. If $f$ is quasi-finite affine, then $f_*$ and $f_!$ are $t$-exact. 

4. If $f$ is proper of relative dimension $\le d$, then $ f_*[d](=f_![d])$ is  left $t$-exact, and  $f_*[-d]$ is right $t$-exact.

5. If $f$ is smooth of dimension $d$, then $f^![-d]$ and $f^*[d]$ are $t$-exact. 


6. If $K$ is  
a point of $S$ 
of dimension $d$ (see \S\ref{scheme}), 
then $j_{K}^*[-d]$ (resp. $j_{K}^![-d]$) is left (resp. right) $t$-exact. 


7. Moreover, for $M\in \obj \dmcs$ we have $M\in \dmcs^{t_l\le 0}$ (resp. $M\in \dmcs^{t_l\ge 0}$) if and only if
for any $K\in \sss$, $K$ is of 
dimension $d$, we have $j_{K}^*M[-d]\in \dmck^{t_l\le 0}$ (resp. $j_{K}^!M[-d]\in \dmck^{t_l\ge 0}$). 


8.  For  a closed  embedding $i:Z\to S$ 
and the complementary immersion $j:U\to S$ for $M\in \obj \dmcs$ we have: $M\in \dmcs^{t_l\le 0}$ (resp. $M\in \dmcs^{t_l\ge 0}$) if and only if $j^*M\in \dmcu^{t_l\le 0}$ and  
$i^*M\in \dmc(Z)^{t_l\le 0}$ (resp. $j^*M\in \dmcu^{t_l\ge 0}$ and  
$i^!M\in \dmc(Z)^{t_l\ge 0}$). %

\end{lem}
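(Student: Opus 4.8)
The plan is to prove the eight assertions essentially by transporting everything across the realization functor $\hetl(-)$ of Proposition \ref{pcisdeg}(\ref{ietre}), using the fact (built into Definition \ref{dmts}(1)) that $M\in \dmc(X)^{t_l\le 0}$ (resp. $\ge 0$) iff $\hetl(M)$ lies in the corresponding half of the perverse $t$-structure on $\dshxl$. Since $\hetl(-)$ intertwines each of the four motivic image functors $f^*,f_*,f^!,f_!$ with the corresponding ($\ql$-adic, derived) \'etale sheaf functor, each statement reduces to the analogous statement for constructible $\ql$-sheaves, where it is classical. Concretely: assertion 1 is the perverse $t$-exactness properties of $i_*=i_!$, $i^!$, $i_!$, $i^*$ for an immersion $i$ (combine the closed-immersion case, which is \S4.1.1 of \cite{bbd}, with the open-immersion case where $j^*$ is $t$-exact and $j_!,j_*$ are right/left $t$-exact by adjunction and Remark \ref{rts}(3), as recorded in Proposition \ref{pglu}(II2)); assertion 2 is Artin's affine vanishing theorem (\S4.1.1–\S4.1.2 of \cite{bbd}); assertion 3 follows by combining 1 and 2 (a quasi-finite affine morphism is, after shrinking, a composite of an open immersion and a finite morphism, and finite morphisms are exact for the middle perversity); assertions 4 and 5 are the standard shifts coming from the fact that for $f$ smooth of dimension $d$ the functor $f^*[d]$ is perverse $t$-exact and for $f$ proper $f_*$ has perverse amplitude $[-d,d]$ when the fibres have dimension $\le d$; assertion 6 follows from 5 and 1 by factoring $j_K$ through a smooth (in fact pro-smooth) neighbourhood of $K$ in its closure and then a point of that neighbourhood, or directly from the pro-smooth base-change description of $j_K^*$ together with Proposition \ref{pcisdeg}(\ref{ridmot},\ref{icont}).

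For assertions 7 and 8 the plan is to invoke the conservativity of an appropriate family of functors and a 'devissage' over the points of $S$. For assertion 7, the point is that the family $\{j_K^*\}_{K\in\sss}$ is conservative on $\dmc(S)$ (Proposition \ref{pcisdeg}(\ref{iconsp})), and, correspondingly, a complex of $\ql$-sheaves on $S$ is perverse-$\le 0$ iff all its restrictions to points satisfy the support condition $\dim\mathrm{supp}\,\mathcal H^{-i}\le i$ — which, after translating the dimension shift on each point $K$ of dimension $d$, is exactly the condition $j_K^*M[-d]\in\dmc(K)^{t_l\le 0}$; the $\ge 0$ ('cosupport') half is dual and uses $j_K^!$ in place of $j_K^*$. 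One has to be slightly careful that these are statements about the realization, not about $t_l$ itself yielding a $t$-structure, so the conclusion must be phrased for the classes $\dmc(-)^{t_l\le 0},\dmc(-)^{t_l\ge 0}$ as in Definition \ref{dmts}(3); but that is exactly what is claimed. Assertion 8 is the localization characterization: a perverse-type class is detected by restriction to an open $U$ and to the complementary closed $Z$, and on $Z$ one uses $i^*$ for the $\le 0$ half and $i^!$ for the $\ge 0$ half — this is precisely the gluing description of the perverse $t$-structure in \S1.4.13/1.4.23 of \cite{bbd}, pulled back through $\hetl(-)$ using the compatibility of the realization with the gluing data (Proposition \ref{pcisdeg}(\ref{iglu})).

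The genuinely delicate points, as I see them, are two. First, in assertion 6 one needs that $j_K^*$, defined via the $2$-colimit formalism of Proposition \ref{pcisdeg}(\ref{icont}) rather than as an honest finite-type pullback, still realizes to the expected operation on $\ql$-sheaves and still obeys the smooth-base-change shift by $d=\dim K$; this requires checking that taking the $2$-colimit commutes with $\hetl(-)$ and with the perverse $t$-structures, which is where \S3 of \cite{huper} / \S3.4 of \cite{brelmot} (and Proposition \ref{phuwe}(II)) get used, and is the only place where the passage to the limit over the $S_i$ is not entirely formal. Second, for assertions 7 and 8 one must make sure the dimension conventions ('dimension of a point $K$' = dimension of its closure in $S$, cf.\ the Notation) line up with the perversity normalization used for $\dshsl$ via \cite{eke} and \cite{gtst}; getting the shifts consistent across all points of varying dimension is the main bookkeeping obstacle, but it is bookkeeping rather than a conceptual difficulty, since on the sheaf side this is literally the definition of the middle-perversity $t$-structure. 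I expect the proof to consist of little more than citing \S4.1 of \cite{bbd} for 1--5, Proposition \ref{pcisdeg}(\ref{ietre}) to pass between motives and sheaves throughout, and \S1.4 of \cite{bbd} together with Proposition \ref{pcisdeg}(\ref{iconsp},\ref{iglu}) for 6--8.
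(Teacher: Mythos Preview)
Your proposal is correct and follows essentially the same strategy as the paper: transport each assertion across $\hetl$ via Proposition \ref{pcisdeg}(\ref{ietre}) and then invoke the corresponding property of the perverse $t$-structure on constructible $\ql$-sheaves. The paper is terser---it lumps assertions 1--7 into a single reduction step and stresses that the \cite{bbd} arguments must be carried over to general reasonable bases using \cite{eke}, \cite{gtst}, and Gabber's results recorded in \cite{illgabb}; for assertion 8 it argues exactly as you do, via Proposition \ref{pglu}(II2) and the $t$-exactness of $i_*$ and $j^*$ on the sheaf side.

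One small remark on assertion 7: the conservativity of $\{j_K^*\}$ on $\dmcs$ is not what drives the argument, and by itself would not yield the biconditional. The actual content---which you do go on to state correctly---is the pointwise support/cosupport characterization of the perverse $t$-structure on $\dshsl$, together with the compatibility of $\hetl$ with $j_K^*$ and $j_K^!$; the latter is indeed the only place where something beyond the finite-type compatibility of Proposition \ref{pcisdeg}(\ref{ietre}) is needed, and you are right to flag it. (Your citation of \cite{huper} and Proposition \ref{phuwe}(II) for this point is slightly off: those concern weights for $\hetlw$ in characteristic $0$, not the pro-limit compatibility of $\hetl$ with point pullbacks.)
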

\begin{proof}
Immediate from Theorem \ref{tetre}(II).

\end{proof}

Now we formulate the first of the main results of this paper.

\begin{theo}\label{tmts}

Suppose that for any 
 point $K$ of (a reasonable $\szol$-) scheme $S$ there exists $t_l$ for
 $\dmck$.
 
Then 
$t_l$ exists for $\dmcs$ 
also. 
\end{theo}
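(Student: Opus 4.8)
~\textbf{Plan of proof.}
The strategy is a Noetherian induction on the (reasonable) scheme $S$, using the gluing formalism of \S\ref{sglu} together with the gluing data supplied by Proposition~\ref{pcisdeg}(\ref{iglu}). By Lemma~\ref{lfumts}(8) (together with (6), (7)) the collection $(\dmc(-)^{t_l\le 0},\dmc(-)^{t_l\ge 0})$ is already set up so that the candidate halves over $S$ are exactly the ones that would be glued from the corresponding halves over a closed subscheme and over its open complement. So the proof will consist of two reductions: first reduce the statement over $S$ to the statement over all proper closed subschemes and over a dense open subscheme $U$; then reduce the statement over such a $U$ to the statement over the points of $S$ (which is the hypothesis). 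I expect the genuinely delicate point to be the second reduction — propagating a $t$-structure from the generic points ``along'' $U$ — for which Theorem~\ref{twchow}(III) and Proposition~\ref{pcisdeg}(\ref{icont}) (continuity) are the key inputs.

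First I would treat the \emph{gluing step}. Suppose $i:Z\to S$ is a closed embedding with open complement $j:U\to S$, and that $t_l$ exists for $\dmc(Z)$ and for $\dmcu$. Since by Proposition~\ref{pcisdeg}(\ref{iglu}) the motivic image functors form gluing data in the sense of Definition~\ref{dglu}(1), Proposition~\ref{pglu}(II1) produces a $t$-structure $t'$ on $\dmcs$ glued from $t_l|_{\dmc(Z)}$ and $t_l|_{\dmcu}$. It remains to identify $t'$ with $t_l$, i.e.\ to check $\dmcs^{t'\le 0}=\dmcs^{t_l\le 0}$ and likewise for $\ge 0$. But $\dmcs^{t'\le 0}=\{M:\ j^*M\in\dmcu^{t_l\le 0},\ i^*M\in\dmc(Z)^{t_l\le 0}\}$ by the definition of a glued $t$-structure, and this is exactly $\dmcs^{t_l\le 0}$ by Lemma~\ref{lfumts}(8); the $\ge 0$ case is identical using $i^!$. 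Hence $t_l$ exists over $S$ as soon as it exists over one closed--open decomposition into strictly smaller schemes. (One must also handle the base of the induction, a single point — but that is literally the hypothesis; and the empty scheme, which is trivial.)

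Second, the \emph{shrinking step}: given that $t_l$ exists over every point $K\in\sss$, I want an open dense $U\subset S$ over which $t_l$ exists, so that the induction can start. The idea is to work one generic point at a time. Fix a generic point $K$; by hypothesis $t_l$ exists over $K$. For $M\in\obj\dmcs$, the object $j_K^*M$ lies in some truncation level for $t_l$ over $K$; using Lemma~\ref{lfumts}(6)--(7) (comparing the candidate halves over $S$ with those over points) together with the continuity statement Proposition~\ref{pcisdeg}(\ref{icont})--(\ref{ietre}) and the spreading-out behaviour of the perverse $t$-structure on $\ql$-sheaves (Proposition~\ref{pcisdeg}(\ref{icont}) lets us realise $M$ over a finite type model, and then the perverse $t$-structure genuinely ``spreads out'' to an open neighbourhood), one finds an open $U_K\ni K$ over which the candidate pair for $\dmcu_K$ is a $t$-structure. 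Intersecting over the finitely many generic points (and, if needed, iterating the gluing step along the induction to cover the closed complement of $\bigcap U_K$), one obtains $t_l$ over all of $S$.

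The main obstacle I anticipate is making the shrinking step rigorous: one needs that ``being a $t$-structure'' is, in the appropriate sense, an open condition — i.e.\ that the $t$-decomposition of a given motive, which exists after pulling back to a point, can be spread out to a neighbourhood and then assembled for all motives simultaneously. This is exactly the kind of statement for which Proposition~\ref{pcisdeg}(\ref{icont}) (continuity of $\dmc(-)$ under filtered limits of schemes) and the conservativity of the family $\{j_K^*\}$ (Proposition~\ref{pcisdeg}(\ref{iconsp})) are designed; the gluing axioms and Lemma~\ref{lfumts} then reduce everything to the perverse case over $\ql$-sheaves, where the corresponding spreading-out is classical. I expect the author's proof to be short precisely because all these inputs are in place, so the write-up is mostly ``apply Proposition~\ref{pglu}(II1), identify the result via Lemma~\ref{lfumts}(8), and induct.''
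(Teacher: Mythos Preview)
Your high-level plan --- Noetherian induction plus gluing --- is right, and your gluing step (given $t_l$ over both $Z$ and $U$, glue via Proposition~\ref{pglu}(II1) and identify with $t_l$ via Lemma~\ref{lfumts}(8)) is correct. The gap is in the shrinking step, exactly where you yourself flag it: you cannot produce a single open $U$ over which the candidate pair is a $t$-structure on all of $\dmcu$. Continuity (Proposition~\ref{pcisdeg}(\ref{icont})) and spreading-out give you, for each individual object $M$ (or morphism $h$), an open $U_M$ (or $U_h$) on which the relevant decomposition or vanishing holds; but $\dmcu$ has infinitely many objects, and there is no finiteness that lets you intersect these opens. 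Your sentence ``For $M\in\obj\dmcs$ \dots\ one finds an open $U_K\ni K$ over which the candidate pair for $\dmc(U_K)$ is a $t$-structure'' slides from a per-object statement to a global one, and that slide is not justified.

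The paper's proof avoids this by never establishing $t_l$ as a $t$-structure over any open $U$ before the induction closes. Instead it verifies the two axioms separately and \emph{object-by-object}. For orthogonality: given $M\in\dmcs^{t_l\le 0}$, $N\in\dmcs^{t_l\ge 1}$, $h\in\dmcs(M,N)$, one has $j_K^*h=0$ over the generic point (by the hypothesis there), so by continuity $j^*h=0$ over some open $U$; then the exact sequence of Proposition~\ref{pcisdeg}(\ref{il4onepoII}) together with the inductive hypothesis over $Z=S\setminus U$ gives $h=0$. For $t$-decompositions: given $M$, lift the $t_l$-decomposition of $j_K^*M[-d]$ to a $t_l$-decomposition of $j^*M$ over some open $U$ (this is the content of Lemma~\ref{lcont}), and then apply Proposition~\ref{pglu}(I), which lifts a distinguished triangle from $\eu$ to $\cu$ using only a $t$-structure on $\du$ --- here $\du=\dmc(Z)$, where the inductive hypothesis applies. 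The point is that Proposition~\ref{pglu}(I), unlike (II1), does not require a $t$-structure on $\eu$. Your summary ``apply Proposition~\ref{pglu}(II1), identify via Lemma~\ref{lfumts}(8), and induct'' therefore misidentifies the key tool: it is Proposition~\ref{pglu}(I), not (II1), that makes the induction go through.
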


\subsection{The proof of the 'globalization' theorem for $t_l$}\label{spmts}

Till \S\ref{sindl} we will assume that $S$ is a (reasonable) $\szol$-scheme.

We will need the following statement.

\begin{lem}\label{lcont}

Let $K$ be a generic point  of a scheme $U'$ whose dimension is $d$ (see \S\ref{scheme}); denote the morphism $K\to U'$ by $j_K$.

Let $M\in \obj\dmc(U')$, and suppose that 
 $j_K^*M[-d]\in \dmck^{t_l\le 0}$ (resp.  $j_K^*M[-d]\in\dmck^{t_l\ge 0}$). 
 Then there exists an open immersion $j:U''\to U'$, $K\in U''$, such that $j^*M\in \dmc(U'')^{t_l\le 0}$ (resp. $j^*M\in \dmc(U'')^{t_l\ge 0}$).

\end{lem}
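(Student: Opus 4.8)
The statement is a "spreading out" result: a property of $M$ that holds at the generic point $K$ of $U'$ should already hold on some open neighbourhood of $K$. The strategy is to translate everything into a statement about the perverse $t$-structure on constructible $\ql$-sheaves via the étale realization $\hetl$ (Proposition \ref{pcisdeg}(\ref{ietre})), and then invoke the fact that membership in the perverse $t\le 0$ (or $t\ge 0$) part of $\dshsl$ is a constructible/generic condition. Concretely, $j^*M \in \dmc(U'')^{t_l\le 0}$ means $\hetl(j^*M) = j^*\hetl(M) \in \dshuppl^{t\le 0}$, and the perverse cohomology sheaves ${}^{\mathfrak p}\mathcal H^n(\hetl(M))$ are constructible complexes on $U'$; the support of each is a constructible subset, and the condition "${}^{\mathfrak p}\mathcal H^n = 0$ for $n>0$" can fail only on a proper closed subset once it holds at $K$.

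**Key steps in order.** First I would reduce to the absolute statement: by Lemma \ref{lfumts}(6), $j_K^*[-d]$ being left (resp. right) $t$-exact, and by shifting, the hypothesis $j_K^*M[-d]\in\dmck^{t_l\le 0}$ is exactly the condition that the (perverse) realization of $M$ has trivial positive-degree stalk cohomology at $K$ after the shift by $d=\dim U'$. Second, pass to $N := \hetl(M) \in \dshupl$; using the generic base change / constructibility of the perverse cohomology sheaves ${}^{\mathfrak p}\mathcal H^n(N)$, the locus where the "$\le 0$" (resp. "$\ge 0$") condition holds is open and, by hypothesis plus the fact that $K$ is the generic point, contains $K$. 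Third, shrink $U'$ to such an open $U''\ni K$ and transport back: $j^*M \in \dmc(U'')^{t_l\le 0}$ (resp. $t_l\ge 0$) since the realization detects membership by Definition \ref{dmts}(1). For the "$\ge 0$" case one uses $j_K^!$ instead of $j_K^*$, but since $j:U''\to U'$ is an open immersion we have $j^! = j^*$ (Proposition \ref{pcisdeg}(\ref{ipur})) and for the point $K$ inside $U''$ of top dimension the two agree up to the relevant shift, so the argument is symmetric; alternatively one can invoke Verdier duality to dualize the "$\le 0$" case, as in Remark \ref{rdeg}(2).

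**Main obstacle.** The technical heart is the genericity/constructibility input: that for $N$ in $\dshupl$ the function sending a point $x\in U'$ to the truncation behaviour of the perverse cohomology of $N$ at $x$ is constructible, so that "holds at the generic point" upgrades to "holds on a dense open". This is a standard fact in the six-functor formalism for $\ql$-sheaves over reasonable schemes (following \cite{eke}, and the generic base change results of Deligne/Gabber), but one must be a little careful: the perverse $t$-structure is not stalk-wise, so "$N\in{}^{\mathfrak p}D^{\le 0}$" is tested by the cohomology dimension of the ordinary stalks shifted by $-\dim\overline{\{x\}}$, and one needs the corresponding statement at the generic point $K$ — which is precisely what the shift by $-d$ in the hypothesis encodes — together with the fact that on a small enough neighbourhood all points have dimension close enough to $d$ that the inequalities propagate. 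So the proof amounts to: (i) rewrite the hypothesis as a perverse-truncation condition at $K$ via Lemma \ref{lfumts}; (ii) spread out using constructibility of perverse cohomology sheaves and Deligne's generic base change; (iii) shrink and realize back. I expect steps (i) and (iii) to be essentially formal given the excerpt, and step (ii) to be where the real content of \cite{eke} (and possibly \cite{illgabb}) is used.
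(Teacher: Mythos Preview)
Your proposal is correct and follows essentially the same route as the paper: reduce to a statement about $\ql$-sheaves via Proposition \ref{pcisdeg}(\ref{ietre}) and Definition \ref{dmts}(1), treat the ``$\ge 0$'' case by Verdier duality, and spread the remaining ``$\le 0$'' condition out from the generic point by constructibility.

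Two small points of comparison. First, the paper is more direct than your initial framing in step (ii): rather than invoking the \emph{perverse} cohomology objects ${}^{\mathfrak p}\mathcal H^n(N)$ (whose behaviour under $j_K^*$ is awkward, since $j_K^*$ does not commute with perverse truncation), the paper works entirely with the \emph{canonical} cohomology sheaves of $C=\hetl(M)$. The two observations it uses are that over the point $K$ the perverse and canonical $t$-structures coincide, and that over any open $U''$ an ordinary constructible sheaf already lies in $\dhuppl^{t\le 0}$. These reduce the question to: if a constructible $\ql$-sheaf has zero stalk at the generic point $K$, then it vanishes on an open neighbourhood of $K$. This is the classical fact (Proposition I.12.10 of \cite{frki}, valid in the generality of \cite{eke}); no appeal to Deligne's generic base change or to constructibility of perverse cohomology is needed. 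You in fact arrive at exactly this picture in your ``main obstacle'' paragraph, so the detour through perverse cohomology is harmless but unnecessary.

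Second, a minor misreading: in the lemma both cases are stated for $j_K^*M[-d]$, not $j_K^!$ for the ``$\ge 0$'' side. The paper handles the ``$\ge 0$'' case by Verdier duality exactly as you suggest as your alternative, so this does not affect your argument.
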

\begin{proof}

Theorem \ref{tetre}(I,II)  reduces this fact to its $\dshs$-version. 
Applying Verdier duality, we obtain that it suffices to verify the following statement: for any $C\in \obj \dhupl$ if $j_K^*(C)[-d]\in \dshkl^{t\le 0}$, then there exists an open immersion $j:U''\to U'$, $K\in U'$, such that $j^*C\in \dhuppl^{t\le 0}$. 
Now, over $K$ the perverse $t$-structure for $\dshkl$ coincides with the 'canonical' one (corresponding to the canonical $t$-structure for the derived category $\dshkl$), whereas over any $U''$ any ('ordinary') constructible $\ql$-sheaf belongs to $\dhuppl^{t\le 0}$. 
Considering the canonical homology of $C$ (note that $j^*$ and $j_K^*$ are exact when restricted to the category of 'ordinary' $\ql$-sheaves) we obtain that it suffices to verify: if the stalk of some  constructible $\ql$-sheaf $T$  at $K$ is zero, then 
for some open $U''\subset U'$, $K\in U''$, we have $j^*T=0$. This is immediate from 
Proposition I.12.10 of \cite{frki}. Note here: one can apply the method of the proof of loc. cit. in our (more general) setting by Theorem 6.3(i) of \cite{eke}.



\end{proof}

Now we 
prove Theorem \ref{tmts}.

We should prove that 
$(\dmcs^{t_l\le 0}, \dmcs^{t_l\ge 0})$ (see Definition \ref{dmts}(1)) yield a $t$-structure 
for $\dmcs$.

Obviously, to this end it suffices to verify that for $\dmcs^{t_l\le 0}$ and $\dmcs^{t_l\ge 0}$ prescribed by Definition \ref{dmts} we have the orthogonality property, and that $t_l$-decompositions exist.

The proof of orthogonality 
uses 
an argument contained in the proof of Proposition 2.2.3 of 
\cite{brelmot}. 
 We apply Noetherian induction. Suppose that the assertion is fulfilled over any (proper) closed subscheme of $S$.

For any (fixed) $M\in \dmcs^{t_l\le 0}$, $N\in \dmcs^{t_l\ge 1}$, $h\in \dmcs(M,N)$,
we should prove that  $h=0$.

Let $K$  be a generic point of $S$ of dimension $d$. 
Lemma \ref{lfumts}(6) yields that $j_K^*M[-d]\in   \dmck^{t_l\le 0}$, $j_K^!N[-d]\in \dmck^{t_l\ge 1}$.
Hence $j_K^*h=0$ (since 
$t_l$ exists for $K$-motives). Hence (by Proposition \ref{pcisdeg}(\ref{icont})) 
there exists an open immersion $j:U\to S$, $K\in U$,   such that  $j^*h=0$. Let $i:Z\to S$ denote the complementary 
closed embedding; Lemma \ref{lfumts}(1) yields that $i^*(M)\in   \dmc(Z)^{t_l\le 0}$, $i^!N\in \dmc(Z)^{t_l\ge 1}$. 
By the inductive assumption (applied 
to $Z$) we have $\dmc(Z)(i^*(M),i^!(N))=\ns$. Hence Proposition \ref{pcisdeg}(\ref{il4onepoII}) yields the assertion.
 
It remains to verify the existence of a $t_l$-decomposition for an $M\in \obj \dmcs$. We use the method of the proof similar to that of Proposition 2.3.4 of \cite{brelmot}. Again, we apply Noetherian induction and assume that the assertion is fulfilled over any proper closed subscheme of $S$.

We choose some generic point $K$ of $S$. We consider the $t_l$-decomposition 
\begin{equation}\label{tdk}
A_K[-d]\to j_K^*M[-d]\to B_K[-d]
\end{equation}
 (of $j_K^*M[-d]$ in $\dmck$). We verify that there exists an open immersion $j:U\to S$ containing $S$  
such that  (\ref{tdk}) (shifted by $[d]$) lifts to a $t_l$-decomposition of $j^*M$. 
By Proposition \ref{pcisdeg}(\ref{icont}) it suffices to verify: for any open $U'\subset S$ containing $K$ and any $A_{U'},B_{U'}\in \dmc(U')$  such that the 'restriction' of $(A_{U'},B_{U'})$ to $K$ equals $(A_K,B_K)$,  there exists an open $U\subset U'$ (containing $K$) such that 'restrictions' $A_U,B_U$ of $A_{U'},B_{U'}$ to $U$ belong to $\dmc(U)^{t_l\le 0}$ and to   $\dmc(U)^{t_l\ge 1}$, respectively. This is immediate from Lemma \ref{lcont}.

Again, we consider the closed embedding $i:Z\to S$ complementary to $j$. Now, the idea is that 
$t_l$ for $\dmcs$ can be glued from those for $\dmc(U)$ and $\dmc(Z)$. Though we only have $t_l$-decompositions in the latter category (by the inductive assumption), this is sufficient to construct the $t_l$-decomposition of $M$.
Indeed, by Proposition \ref{pglu}(I) there exists a distinguished triangle $A\to M\to B$ such that $j^*A\in \dmcu^{t_l\le 0}$ and $i^*A\in \dmc(Z)^{t_l\le 0}$ (resp. $j^*B\in \dmcu^{t_l\ge 1}$ and $i^!B\in \dmc(Z)^{t_l\ge 1}$). By Lemma \ref{lfumts}(7), this triangle yields the $t_l$-decomposition of $M$.

\begin{rema}\label{rmain}

1. Actually, 
we do not need a complete characterization of 
 $t_l$ 
 for the proof. We only need a pointwise characterization of $t_l$  (cf. Lemma \ref{lfumts}(7)) and  Lemma \ref{lcont}  for it. 
 
Also note here: if we have {\bf any} $t$-structures for $\dmcs$, $\dmck$, and $\dmc(U)$ for any $U$ such that all possible $j^*$ 
and $j_K^*[-d]$ are $t$-exact, then the statement of Lemma \ref{lcont} for these $t$-structures is fulfilled automatically. Indeed, Proposition \ref{pcisdeg}(\ref{icont}) implies that $j^*(M^{\tau\ge 1})$ (resp. $j^*(M^{\tau\le -1})$) vanishes for some $U$, since $j_K^*(M^{\tau\ge 1})$ (resp. $j_K^*(M^{\tau\le -1})$) does.

Still, the author does not know how to verify Lemma \ref{lcont} for the version of (the description of) the motivic $t$-structure (over fields) given by Proposition 4.5 of \cite{beilnmot}.

2. Lemma \ref{lfumts}(7) yields that $t_l$ does not depend on the choice 
of (a version of) $\hetl$ over $S$; see also Remark \ref{rind} 
below.

\end{rema}

Now we prove that it suffices to verify the conservativity of $\hetl$ and the existence of $t_l$ over universal domains.

 \begin{pr}\label{pred}
 1. Assume that $\hetl$  is conservative on $\dmck$ for all $K\in \sss$.  Then the same is true for $\dmcs$. 

2. Suppose that 
$\hetl$ is conservative  over some set of universal domains $K_i$ of certain characteristics $p_i\neq l$ (one of $p_i$ can be $0$). 

Then $\hetl$ is conservative over any (reasonable) $S$ that satisfies the following conditions: the characteristic of 
any point of $S$ is one of $p_i$. 

3. Suppose that 
$t_l$ exist over some universal domains $K_i$ of characteristics $p_i$. 

Then  $t_l$  also exists over any 
(reasonable) $S$ as in the previous assertion.

\end{pr}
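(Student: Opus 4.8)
This is immediate from what has already been recalled. Let $M\in\obj\dmcs$ with $\hetl(M)=0$. For every $K\in\sss$ the functor $j_K^*$ is defined (Proposition \ref{pcisdeg}(\ref{ridmot})) and is intertwined by the \'etale realization with the corresponding pull-back of $\ql$-sheaves (Proposition \ref{pcisdeg}(\ref{ietre}), combined with a passage to the limit via Proposition \ref{pcisdeg}(\ref{icont}) in the non-finite-type case), so $\hetl(j_K^*M)\cong j_K^*\hetl(M)=0$. The assumed conservativity over $K$ gives $j_K^*M=0$ for all $K\in\sss$, and since $\{j_K^*\}_{K\in\sss}$ is conservative on $\dmcs$ (Proposition \ref{pcisdeg}(\ref{iconsp})) we conclude $M=0$; as $\hetl$ is exact, this is its conservativity.

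\textbf{Reduction of Parts 2 and 3 to fields.} By Part 1 the conservativity of $\hetl$ over $S$ follows from its conservativity over every point $K\in\sss$; by Theorem \ref{tmts} the existence of $t_l$ over $S$ follows from its existence over every such $K$. Every point of $S$ is a field of one of the characteristics $p_i$, and any field of characteristic $p$ embeds into a universal domain of characteristic $p$. Hence both assertions reduce to the claim: \emph{if $\hetl$ is conservative over a universal domain $\Omega$ (resp.\ if $t_l$ is a $t$-structure on $\dmc(\Omega)$), then the same holds over every subfield $K\subseteq\Omega$.} Throughout I use that pull-back along an extension of fields commutes with $\hetl$ (Proposition \ref{pcisdeg}(\ref{ietre})) and that over the spectrum of a field the perverse $t$-structure is the canonical one; consequently such a pull-back $\dmc(K)\to\dmc(L)$ carries $\dmc(K)^{t_l\le 0}$ into $\dmc(L)^{t_l\le 0}$ and reflects it, and similarly for $\dmc(-)^{t_l\ge 0}$ and for the property $\hetl(-)=0$, since restriction of constructible $\ql$-sheaves along $\spe L\to\spe K$ is exact, conservative and strictly $t$-exact.

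\textbf{The field tower.} I would prove the claim by factoring $K\subseteq\Omega$ as
$$ K\ \subseteq\ K^{\mathrm{perf}}\ \subseteq\ \overline{K}\ \subseteq\ \overline{K}(\{t_j\}_{j\in\n})\ \subseteq\ \overline{\overline{K}(\{t_j\})}\ =\ \Omega , $$
with $K^{\mathrm{perf}}$ the perfect closure, $\overline{K}$ an algebraic closure of it, the $t_j$ sufficiently many independent transcendentals, and the last extension an algebraic closure, and by propagating the two properties down each step. Pull-back along a purely inseparable extension is an equivalence on Beilinson motives (invariance under universal homeomorphisms), so it transports both properties; this disposes of $K\subseteq K^{\mathrm{perf}}$ and of $\overline{K}(\{t_j\})^{\mathrm{sep}}\subseteq\Omega$. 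For the separable algebraic steps I use \'etale descent for (rationally linear) Beilinson motives together with Proposition \ref{pcisdeg}(\ref{icont},\ref{itr}): an object killed by $\hetl$ after base change to the separable closure is already killed over a finite subextension $L$, over which the source motive is a retract of $f_*f^*$ of it, so conservativity descends; and $t_l$, being defined through $\hetl$ and the canonical $t$-structure, is $\mathrm{Gal}$-equivariant and therefore descends along the pro-\'etale cover. The remaining purely transcendental step $\overline{K}\subseteq\overline{K}(\{t_j\})$ — arranged so that the base field $\overline{K}$ is algebraically closed, hence infinite — I handle by spreading a datum over $\overline{K}(t_1,\dots,t_n)$ (reached by Proposition \ref{pcisdeg}(\ref{icont})) out to a nonempty open $U\subseteq\af^n_{\overline{K}}$ by means of Lemma \ref{lcont}, keeping track of the perverse normalization (the shift by $\dim U$), and then descending to $\overline{K}$ via a $\overline{K}$-rational point $x\in U(\overline{K})$, which is a section of $U\to\spe\overline{K}$: for conservativity this recovers $M$ from the vanishing of its base change; for orthogonality it shows that $\dmc(\overline{K})\to\dmc(\overline{K}(\{t_j\}))$ is faithful; for $t_l$-decompositions one spreads out the $t_l$-truncation triangle of the base change of $M$ and specializes it back, using the $t_l$-exactness statements of Lemma \ref{lfumts}(1,6) (for $x^*$ and, where needed, $x^!$ together with purity).

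\textbf{Expected main obstacle.} Part 1 and the two reductions are routine; the real work is in propagating the two properties down the tower, and the genuinely delicate point is the purely transcendental step for the \emph{existence} of $t_l$: producing a $t_l$-truncation triangle over $\overline{K}$ from one over $\overline{K}(t_1,\dots,t_n)$ while keeping \emph{both} halves in their prescribed positions under the dimension shifts relating $\dmc(\overline{K})$, $\dmc(\af^n_{\overline{K}})$ and $\dmc(\overline{K}(t_1,\dots,t_n))$. Here one must use Lemma \ref{lcont} and the precise $t_l$-exactness of restriction functors very carefully — and probably supplement the naive ``restrict to a rational point'' argument with the direct-image functors for the smooth affine projection and homotopy invariance (so that the source sits $t_l$-admissibly in the target). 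A secondary subtlety, for $t_l$ again, is verifying that the $\mathrm{Gal}$-equivariant $t$-structure really descends along the infinite separable steps, which leans on the rational-coefficient \'etale descent formalism for Beilinson motives.
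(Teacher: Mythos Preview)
Your outline matches the paper's approach closely: Part~1 is exactly as stated, and the reduction of Parts~2--3 to the field case via Part~1 and Theorem~\ref{tmts}, followed by a spreading-out/rational-point argument, is what the paper does. Two points deserve attention.

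\emph{A minor gap.} You assert that ``any field of characteristic $p$ embeds into a universal domain of characteristic $p$''. This is false as stated (the residue fields of a reasonable $S$ may have cardinality exceeding that of the chosen $\Omega$). The paper fixes this by first descending, via Proposition~\ref{pcisdeg}(\ref{icont},\ref{igenc}), to a finitely generated subfield $F\subset K$; since base change of constructible $\ql$-sheaves along field extensions is conservative, the relevant hypothesis passes to $F$, and $F$ does embed in $\Omega$. Only after this step does one work with $K\subset\Omega$.

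\emph{The transcendental step for $t_l$.} You correctly identify this as the crux, but the proposed tools (the $t$-exactness direction of $x^*$, $x^!$, purity, or homotopy invariance along $u$) do not close the gap: for a closed point $x\hookrightarrow U$ one only gets $x^*(\dmcu^{t_l\le 0})\subset\dmc(\overline K)^{t_l\le 0}$ and the dual statement for $x^!$, which after the dimension shift leaves a deficit of $d$ in one of the two halves. The paper's argument is different and simpler. Having spread the $t_l$-decomposition $Z_1\to u^*M[d]\to Z_2$ to $U$, one does not try to control $x^*Z_i$ via abstract $t$-exactness. Instead one computes $\hetl(Z_i)$ directly: since $u^*[d]$ is perverse-$t$-exact on $\ql$-sheaves, the (unique) perverse truncation of $\hetl(u^*M[d])=u^*\hetl(M)[d]$ is $u^*(\tau^{\le 0}\hetl(M))[d]\to u^*\hetl(M)[d]\to u^*(\tau^{\ge 1}\hetl(M))[d]$, and hence $\hetl(Z_1)=u^*(\tau^{\le 0}\hetl(M))[d]$, $\hetl(Z_2)=u^*(\tau^{\ge 1}\hetl(M))[d]$. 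Restricting to the rational point $s$ (with $u\circ s=\id$) gives $\hetl(s^*Z_i[-d])=\tau^{\le 0}\hetl(M)$ respectively $\tau^{\ge 1}\hetl(M)$, so by the very definition of $t_l$ the triangle $s^*Z_1[-d]\to M\to s^*Z_2[-d]$ is a $t_l$-decomposition. The point is that $\hetl(Z_i)$ is \emph{already} pulled back from $\overline K$, a feature specific to $M$ coming from the base; no homotopy-invariance or direct-image manipulation is needed.

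For the algebraic step your ``Gal-equivariance/descent'' is in the right spirit; the paper makes it concrete: reduce to a finite extension $f:\spe F\to\spe K$, use that $f_*$ is $t$-exact (finite morphism, Lemma~\ref{lfumts}(4)) to push the $t_l$-decomposition of $f^*Z$ down, then use that $Z$ is a retract of $f_*f^*Z$ (Proposition~\ref{pcisdeg}(\ref{itr})) together with the already established orthogonality over $K$ and idempotent-completeness of the $t_l$-halves.
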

\begin{proof}

1. Immediate from Proposition \ref{pcisdeg}(\ref{iconsp}) and Theorem \ref{tetre}(I).

2. The previous assertion yields: it suffices to verify that 
$\hetl$ is conservative  over any characteristic $p$  field $K$ (here $p$ could be $0$) if it is so over some universal domain $K'$ of characteristic $p$.
We should verify: if $\hetl(M)=0$ for an $M\in \obj \dmck$, then $M=0$. We fix some $M$.

First we note that any object (and morphism) in $\dmck$ is defined over some finitely generated subfield $F$ of $K$
see Proposition \ref{pcisdeg}(\ref{icont}). 
 Besides, for any extension of fields 
 the corresponding base change functor for 
 $D^b_cSh^{et}(-,\ql)$ is conservative  (see Theorem \ref{tetre}(III.1)). 
 Hence we obtain that $\hetl(M_F)=0$ for the corresponding $M_F\in \dmc(\spe F)$.
 Therefore, we may assume that $K\subset K'$;
denote the corresponding morphism by $b$. 
 We have $\hetl(b^*M)=0$; hence $b^*M=0$ and 
 the conservativity of $b^*$ (see Proposition \ref{pcisdeg}(\ref{iconspr})) yields the result.

3. Theorem \ref{tmts} implies: it suffices to verify that 
$t_l$ exists over any characteristic $p$  field $K$ (here $p$ could be $0$) if it exists over some universal domain $K'$ of characteristic $p$.
We prove 
this using an argument that is rather similar to the one above.

Again, in order to prove the existence of  
$t_l$ it 
suffices to verify the orthogonality axiom and the existence of $t_l$-decompositions for the classes  described in Definition \ref{dmts}(3).

Arguing as above, we obtain that it suffices to verify: if  
$t_l$ exists over $K'$, it also exists over  its 
subfield $K$.

First we consider  
 an algebraically closed $K\subset K'$. Our arguments along with Lemma \ref{lcont} yield:  if a $t_l$-decomposition of $Z_{K'}$ for a $Z\in \obj \dmck$ exists in $K'$, a $t_l$-decomposition of $Z_U[d]$ exists over 
some smooth connected 
$K$-variety $U$ of dimension $d$ (i.e. a $t_l$-decomposition 
$Z_1\to Z_U[-d] \to Z_2$ of $Z_U[-d]=u^*Z[-d]$ exists in $\dmc(U)$, for $u:U\to \spe K$ being the structure morphism of $U$). 
Similarly, if we have a non-zero $h\in \dmck(M,N)$, $M\in \dmck^{t_l\le 0}$,  $N\in \dmck^{t_l\ge 1}$, then it vanishes over a certain $U$ (since it vanishes over $K'$).

We denote by $s:\spe K\to U$ the embedding of some $K$-point of $U$ into $U$. 
Then $h=s^*u^*h$; hence $h=0$. Now, Lemma \ref{lfumts}(5) yields that $u^*[d]$ is $t$-exact. 
Since it is also conservative, we obtain: it suffices to verify that $Z_i=u^*s^*Z_i$ (for $i=1,2$). Since $\hetl$ is conservative by the previous assertion, it suffices to verify that $\hetl(Z_i)=(u\circ s)^*\hetl(Z_i)$. Now, it remains to note that $\hetl(Z_i)$ 
can be obtained by applying $u^*[d]$ to the $t_l$-decomposition of $\hetl(Z)$.


It remains to prove that 
$t_l$ exists over $K$ if it exists over its algebraic closure. Similarly to the reasoning above, we obtain: for any $Z\in \obj \dmck$    
there exists a finite extension $F/K$ (of some degree $d>0$) such that a $t_l$-decomposition of $Z_F$ exists (in $\dmc(F)$), and also any $h\in \dmck(M,N)$, $M\in \dmck^{t_l\le 0}$,  $N\in \dmck^{t_l\ge 1}$, vanishes over a certain $F$. 
Then Proposition \ref{pcisdeg}(\ref{itr}) yields 
the existence of $t_l$ over $K$. Indeed, if $f:F\to K$ is the corresponding morphism, loc. cit.  yields that the morphism $\dmck(M,N)\to \dmc(F)(f^*M,f^*N)$ is injective. Besides, since $f_*$ is $t$-exact (see Lemma \ref{lfumts}(4)), we obtain that a $t_l$-decomposition exists for $f_*f^*Z$; hence it exists for $Z$ 
also (since $\dmcs^{t_l\le 0}$ and  $\dmcs^{t_l\ge 0}$ are idempotent complete). 

\end{proof}

We also recall here Proposition \ref{pcisdeg}(\ref{ivoemot}); it yields that  $\dmc(K_i)$ is the category $\dmgm(K_i)$ of  Voevodsky's motives. Hence it suffices to verify   the conservativity of $\hetl$ and the existence of $t_l$ for the latter categories (cf. Remark \ref{rind}).

\begin{coro}\label{ctvs}
1. Suppose that 
$t_l$ exist over some universal domains $K_i$ of certain characteristics $p_i$. 
Then for any $S$ as in Proposition \ref{pred} Chow-weight filtrations and spectral sequences for $\hetlz$ over $S$ can be 
 lifted to $\mm(S)$.

2. Suppose that $t_l$ exists for $\dmck$ where $K$ is some  universal domain of characteristic $p$ ($p$ is either a prime or $0$); let $S$ be a very reasonable scheme of characteristic $p$. 
Then  there exists a weight filtration for $\mm(S)$ with the corresponding functors $\wmmli$ such that for any $i\in \z$ we have: $W_{i} 
\hetlz\cong \hetl\circ \wmmli\circ \htlz$.
\end{coro}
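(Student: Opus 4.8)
The plan is to reduce both assertions to facts already in hand: the globalization of $t_l$ (Proposition \ref{pred}(3)), the $t$-exactness of $\hetl$ built into Definition \ref{dmts}, the weight-spectral-sequence formalism of Propositions \ref{pwss} and \ref{pdeg}, and --- for assertion 2 --- the degeneration statement of Theorem \ref{tdeg}. I would begin with two observations valid whenever the hypotheses hold. First, by Proposition \ref{pred}(3) the $t$-structure $t_l$ exists for $\dmcs$; being bounded (Remark \ref{rbound}(1)) it is non-degenerate, and hence $\hetl$ is conservative on $\dmcs$ (if $\hetl(M)=0$ then $M\in\cap_{i\in\z}\dmcs^{t_l\le i}=\ns$); in particular its restriction $\hetl\colon\mm(S)\to\shs$ is a conservative exact functor. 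Second, since $t_l$ is by construction strictly compatible with the perverse $t$-structure, $\hetl$ is $t_l$-exact, so $\hetlz$ (the zeroth perverse homology of the realization $\hetl$) satisfies $\hetlz\cong\hetl\circ\htlz$ as functors $\dmcs\to\shs$, and more generally $\hetlz\circ[q]\cong\hetl\circ H^{t_l}_q$.

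For assertion 1 I would apply Proposition \ref{pwss}(I) to the homological functor $\htlz\colon\dmcs\to\mm(S)$ (whose target is abelian): for every $M\in\obj\dmcs$ this produces a Chow-weight spectral sequence $T_{\wchow}(\htlz,M)$ with $E_1^{pq}=H^{t_l}_q(M^p)$ for suitable $M^p\in\chows$, converging to $H^{t_l}_{p+q}(M)$, together with the Chow-weight filtration functors $W_i\htlz\colon\dmcs\to\mm(S)$ of Definition \ref{dwfil}(2); these are objects, respectively functors, valued in $\mm(S)$. Since $\hetl\colon\mm(S)\to\shs$ is exact it preserves images, whence $\hetl\circ W_i\htlz=W_i(\hetl\circ\htlz)=W_i\hetlz$; and by the functoriality of weight spectral sequences in the (co)homological argument (Proposition \ref{pwss}(I), from $E_2$ onwards) $\hetl$ carries $T_{\wchow}(\htlz,M)$ onto $T_{\wchow}(\hetlz,M)$. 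Thus the Chow-weight filtration and spectral sequence for $\hetlz$ are the $\hetl$-images of structures defined over $\mm(S)$, which is exactly the asserted lifting.

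For assertion 2, $\cha S=p$ forces $S$ to be equicharacteristic, so Theorem \ref{tdeg}(I) --- together with the limit reductions carried out in the proof of Theorem \ref{tdeg}(II1), and, in characteristic $0$, the passage from $\hetlwz$ to $\hetlz$ via functoriality of Chow-weight spectral sequences --- shows that $T_{\wchow}(\hetlz,-)$ degenerates at $E_2$. Writing $\hetlz\cong\hetl\circ\htlz$ with $\hetl|_{\mm(S)}$ conservative and exact, I would then invoke Proposition \ref{pdeg}(II2) (with $\cu=\dmcs$, $w=\wchow$, $t=t_l$, $H=\htlz$, $F=\hetl|_{\mm(S)}$) to conclude that $T_{\wchow}(\htlz,-)$ degenerates at $E_2$ as well. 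Proposition \ref{pdeg}(I) then gives that the restrictions $\wmmli$ to $\mm(S)$ of the functors $W_i\htlz$ define a weight filtration for $\mm(S)$ with $W_i\htlz=\wmmli\circ\htlz$; combining this with the identity $W_i\hetlz=\hetl\circ W_i\htlz$ from the previous paragraph yields $W_i\hetlz\cong\hetl\circ\wmmli\circ\htlz$, as required.

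The real difficulty is not located in this corollary: it is contained in Theorem \ref{tdeg}, the unconditional degeneration of Chow-weight spectral sequences for perverse \'etale homology over equicharacteristic schemes, and everything else here is formal. Within the argument above the only step that needs a word of justification is the conservativity hypothesis of Proposition \ref{pdeg}(II2) --- but, as noted at the outset, this is automatic from the non-degeneracy of $t_l$ --- together with the bookkeeping identification $\hetlz\cong\hetl\circ\htlz$, which is immediate from $t$-exactness.
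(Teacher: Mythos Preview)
Your proof is correct and follows essentially the same route as the paper's: lift via Proposition \ref{pwss}(I) for part 1; for part 2, use the degeneration of $T_{\wchow}(\hetlz,-)$ together with Proposition \ref{pdeg}(II2) (with $F=\hetl|_{\mm(S)}$) to get degeneration of $T_{\wchow}(\htlz,-)$, then Proposition \ref{pdeg}(I) to produce the weight filtration, and finally the exactness of $\hetl$ to obtain the displayed identity. Two small remarks: first, you can cite Theorem \ref{tdeg}(II1) directly for the degeneration of $T_{\wchow}(\hetlz,-)$ over an equicharacteristic $S$ rather than reassembling it from Theorem \ref{tdeg}(I) plus the limit reductions --- that statement is exactly what you need; second, your explicit verification that $\hetl$ is conservative once $t_l$ exists (via non-degeneracy of $t_l$) is a point the paper leaves implicit but which is indeed required for the application of Proposition \ref{pdeg}(II2), so it is good that you spelled it out.
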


\begin{proof}
1. Immediate from 
Remark \ref{rwss}(2).

2. By Theorem \ref{tdeg}(II.1),  Chow-weight spectral sequences  degenerate (at $E_2$) for $H'=\hetlz$. Hence Proposition \ref{pdeg}(II) yields the degeneration of  Chow-spectral sequences
also for $H=\htlz$. Therefore part I of loc. cit. yields the existence of a weight filtration for $\mm(S)=\hrtl$ 
 such that $W_{i} 
\htlz\cong \wmmli\circ \htlz$. It remains to apply Proposition \ref{pwss}(I). 

\end{proof}

\begin{rema}\label{rintmot}
1. Certainly, the assumptions of the Corollary also yield that for any open embedding $j:U\to S$ one can lift 
$j_{!*}$ from $\shm$ to $\mm(-)$. In particular, if $U$ is regular and dense in $S$, 
$j_{!*}\q_U\in \dmcs^{t_l=0}$ could be called the 'intersection motif' of $S$; it corresponds to the $\ql$-adic \'etale intersection homology of $S$.

2. Conversely to part 2 of the Corollary, suppose that for some 
 $S$ there exists some weight filtration for $\mm(S)$ such that $H_i^{t_l}(\chows)$ is of weight $i$ (for any $i\in \z$; this assumption can be reduced to the following ones:  $\q(j)$ is of weight $-2j$ for any $j\in \z$, whereas $p_!$ respects weights in the corresponding sense for $p$ being a projective morphism of schemes). Then for $H=H^{t_l}_0$ one can easily see that $T(H,-)$ degenerates at $E_2$ (cf. the proof of Theorem \ref{tdeg}(II.1)). Certainly, this yields Conjecture \ref{cdeg} in this case  (see Proposition \ref{pdeg}(II.1)). We obtain a good reason to believe Conjecture \ref{cdeg} (for a general $S$).

3. Instead of assuming that $t_l$ exists over a universal domain $K$ (of characteristic $p$), it suffices to assume that it exists over all members of a family $K_i$ of fields such that any finitely generated $L$ of characteristic $p$ embeds into one of $K_i$. 
In particular,   one could take $K_{i}$  being algebraically closed fields of characteristic $p$ such that  their transcendence degree is not bounded (by any natural number).

\end{rema}

\subsection{Certain consequences of the existence of a nice motivic $t$-structure}\label{scons}

Now we derive certain 
consequences from the existence of a nice motivic $t$-structure for $\dmcs$; we will need some of them below in order to make a certain inductive step.

\begin{pr}\label{pcor}

Suppose that a nice $t_l$ exists over $S$; let $m\in \z$, $M\in \obj\dmcs$. Then the following statements are fulfilled.

I.1. The category $\mmm(S)=\mm(S)\cap \hwchow[m]$ is (abelian) semisimple.

2. If $M\in \dmcs^{t_l=0}$, then it  possesses an increasing filtration $W_{\le r,MM}M$, $r\in \z$, whose $j$-th factor belongs to $\mm_j(S)$ for any $j\in \z$; this filtration is $\hrtl$-functorial in $M$.

3.  $M\in \dmcs_{\wchow\le m}$ (resp. $M\in \dmcs_{\wchow\ge m}$) if and only if  for
 any $j\in \z$ we have $(W_{m+j}H_j^{t_l})(M)=H_j^{t_l}(M)$  (resp. $(W_{m+j-1}H_j^{t_l})(M)=0$).

4. $M\in \dmcs_{\wchow\le m}$ (resp. $M\in \dmcs_{\wchow\ge m}$) if and only if for any $j\in \z$ we have $(W_{m+j}\hetlj)(M) =\hetlj(M)$ (resp. $(W_{m+j-1}\hetlj)(M) =0$). 

5. If $M\in \obj \chows\ (\subset \obj \dmcs)$,   
then it can be decomposed into a direct sum of objects of $MM_j(S)[-j]$; this decomposition is unique up to a (non-unique) isomorphism.

6. If $M\in \obj \mmm(S)$, then it can be decomposed as a direct sum of simple objects of $\mmm(S)$; this decomposition is unique up to an isomorphism.


II Let $S$ be of finite type over $\sfp$ (for $p\neq l$); $M\in \dmcs^{t_l=0}$. Consider the weights for $\dhsl$ defined in \S5 of \cite{bbd} (cf.  Proposition \ref{phuwe}(I)). 
 Then
 $M\in \dmcs_{\wchow\le m}$ (resp. $M\in \dmcs_{\wchow\ge m}$) if and only if 
$\hetl(M)$ is of weight $\le m$ (resp. of weight $\ge m$). 
\end{pr}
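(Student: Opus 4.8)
The plan is to deduce assertions I1--I6 directly from the transversality of $t_l$ and $\wchow$, i.e. from Proposition \ref{ptrans}(II). Indeed, by Definition \ref{dmts}(4) niceness of $t_l$ means precisely that $t_l$ is transversal to $\wchow$, so with $\cu=\dmcs$, $t=t_l$, $w=\wchow$ we may invoke all of Proposition \ref{ptrans}(II). Assertion I1 is then Proposition \ref{ptrans}(II3) applied with $\au_m=\mmm(S)=\dmcs^{t_l=0}\cap\dmcs_{\wchow=m}$. Assertion I2 is Proposition \ref{ptrans}(II4) (the filtration $W_{\le r}M$ and its $\hrt$-functoriality). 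Assertion I3 is the specialization to our situation of Proposition \ref{ptrans}(II5). Assertion I5 follows from the splitting $\cu_{\wchow=0}=\bigoplus_{m}\obj\au_m[-m]$ of Proposition \ref{ptrans}(II3): any $M\in\obj\chows=\dmcs_{\wchow=0}$ decomposes as $\bigoplus_i H_i^{t_l}(M)[-i]$ with $H_i^{t_l}(M)\in\mmm(S)$ for $m=i$, and uniqueness up to (non-unique) isomorphism is the usual Krull--Schmidt-type statement, which holds since $\dmcs$ is idempotent complete (Proposition \ref{pcisdeg}(\ref{iidcompl})) and the $\mmm(S)$ are pairwise orthogonal semi-simple. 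Assertion I6 is just semi-simplicity of $\mmm(S)$ from I1, together with uniqueness of the isotypic decomposition in a semi-simple abelian category.

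For assertion I4 the plan is to combine I3 with the comparison of Chow-weight filtrations on motives and on sheaves. By Corollary \ref{ctvs}(2) (whose hypotheses are met: $\cha S=p$ and $t_l$ exists over a universal domain of characteristic $p$, which by niceness-plus-transversality we may even take to be nice) we have $W_i\hetlz\cong\hetl\circ\wmmli\circ\htlz$; equivalently, applying $\hetl$ (which is $t$-exact in the appropriate sense and weight-exact by Proposition \ref{phuwe}(\ref{ihuw})) intertwines the weight filtration $(W_{\le j}H_j^{t_l})(M)$ on $\mm(S)$ with the weight filtration $W_{\bullet}\hetln$ on $\shs$. So the condition $(W_{m+n}\hetln)(M)=\hetln(M)$ (resp. $(W_{m+n-1}\hetln)(M)=0$) for all $n$ translates, via conservativity of $\hetl$ on $\hrtl$ (Proposition \ref{pred}, using Proposition \ref{pcisdeg}(\ref{iconsp},\ref{ietre})), into $(W_{\le m+j}H_j^{t_l})(M)=H_j^{t_l}(M)$ (resp. $(W_{\le m+j-1}H_j^{t_l})(M)=0$) for all $j$, and then I3 gives $M\in\dmcs_{\wchow\le m}$ (resp. $\ge m$).

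For assertion II the plan is to identify the weight filtration coming from $\wchow$ with the Beilinson--Bernstein--Deligne weight filtration on $\dhsl$ for $S$ of finite type over $\sfp$. By Theorem \ref{tdeg}(I1), for $H=\hetlz$ the factors of the Chow-weight filtration $(W_mH)(M)$ land in $\shs_{w=m}$, and this filtration is uniquely and functorially characterized by that property; on the other hand the classical weight filtration of \cite{bbd} on $\hetl(M)\in\shs$ has exactly this characterization (its $m$-th graded piece is pure of weight $m$, Proposition \ref{phuwe}(\ref{iperp})). Hence the two filtrations coincide. Therefore $\hetl(M)$ has weights $\le m$ (resp. $\ge m$) iff $(W_m H)(M)=H(M)$ (resp. $(W_{m-1}H)(M)=0$), i.e. iff $(W_{\le m}H_0^{t_l})(M)\otimes$-translated conditions hold for $n=0$; since $M\in\dmcs^{t_l=0}$ we have $\hetln(M)=0$ for $n\neq0$ (as $\hetl$ is $t$-exact here), so the conditions of I4 for all $n$ reduce to the single condition at $n=0$, and I4 yields $M\in\dmcs_{\wchow\le m}$ (resp. $\ge m$).

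The main obstacle I anticipate is bookkeeping rather than conceptual: making the intertwining in I4 precise requires carefully matching the indexing of the weight filtration on $\mm(S)$ (shifted by $j$ on $H_j^{t_l}$) with the weight filtration on the perverse sheaf $\hetln(M)$, and checking that $\hetl$ really does carry $\wmmli$ to $W_{\le i}$ on the nose — this is where Corollary \ref{ctvs}(2) and the $t$-exactness properties of $\hetl$ (Lemma \ref{lfumts}, via Proposition \ref{pcisdeg}(\ref{ietre})) must be invoked with the correct shifts. For II the only subtlety is confirming that the uniqueness clause in Theorem \ref{tdeg}(I1) genuinely pins down the filtration well enough to match \cite{bbd}, which it does because in both cases the graded pieces are pure perverse sheaves and distinct pure weights are orthogonal (Proposition \ref{phuwe}(\ref{iperp})).
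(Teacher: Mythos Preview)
Your proof is essentially correct and follows the paper's approach for I1--I3, I5, I6, and II. For I1--I3 and I5 the paper simply says ``Immediate from Proposition \ref{ptrans}(II)'', and for I6 ``Immediate from the semi-simplicity of $\mmm$''; for II the paper invokes I4 together with Theorem \ref{tdeg}(I) (the paper writes ``I(5)'' but this appears to be a slip for I(4), and your argument via I4 is the intended one).

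There is, however, a genuine gap in your argument for I4. You invoke Corollary \ref{ctvs}(2), asserting that ``$\cha S=p$'' and that $t_l$ exists over a universal domain of characteristic $p$. Neither hypothesis is part of the statement of Proposition \ref{pcor}: we are only told that a nice $t_l$ exists over $S$, with no equicharacteristic assumption and no assumption about universal domains. So Corollary \ref{ctvs}(2) is not available. Similarly, your appeal to Proposition \ref{pred} for conservativity of $\hetl$ on $\hrtl$ is misplaced, since that proposition also carries hypotheses about universal domains.

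The fix is the route the paper takes. First observe $\hetln(M)\cong\hetlz(H^{t_l}_n(M))$ (since $\hetl$ is $t$-exact by the very definition of $t_l$), so by I3 one reduces to $M\in\dmcs^{t_l=0}$ and $n=0$. Then apply Proposition \ref{pdeg}(II2) with $F=\hetlz|_{\hrtl}$. The two ingredients needed there are: (a) conservativity of $F$, which follows directly from the definition of $t_l$ (if $M\in\hrtl$ and $\hetl(M)=0$ then $M\in\dmcs^{t_l\le -1}\cap\dmcs^{t_l\ge 0}=\{0\}$); and (b) degeneration of $T_{\wchow}(F\circ H^{t_l}_0,-)=T_{\wchow}(\hetlz,-)$, which follows from transversality via Proposition \ref{ptrans}(II1), \emph{not} from Theorem \ref{tdeg}(II1). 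This is the key point: niceness of $t_l$ already forces the degeneration you need, so no equicharacteristic hypothesis is required.
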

\begin{proof}

I
(1--3,5): Immediate from Proposition \ref{ptrans}(II).

4. First we note that $\hetlj(M)\cong \hetlz(H^{t_l}_{j}(M))$. Applying assertion I3 we obtain: we may assume that $M\in \dmcs^{t_l=0}$ and consider only $m=0$. Then 
applying  Proposition \ref{pdeg}(II2) for $F=\hetlz$  we obtain the result (we also use Proposition \ref{pdeg}(II2) in order to relate the weight filtration for $\mm(S)$ with Chow-weight spectral sequences). 

6. Immediate from the semisimplicity of $\mmm(S)$.

II  Immediate  from assertion I5 along with Theorem \ref{tdeg}(I).

\end{proof}

\begin{rema}\label{rhaco}

1. $\mmm(S)$ could be called the category of {\it pure motives of weight $m$} (over $S$).

2. Consider a category $MS$ of 'homological $S$-motives' whose objects are $\dmcs_{\wchow=0}$, and $$MS(M,N)=\imm (\dmcs(M,N)\to \bigoplus_{m\in \z} 
\shs(\hetlm(M), \hetlm(N)). $$ We conjecture that it is (anti)-isomorphic to the category $M(S)$ described in Definition 5.9 of \cite{haco}
 (cf. 
Remark 2.1.2 of \cite{brelmot}).

We obtain: if 
a nice $t_l$ exists over $S$, then $MS$ is isomorphic to the direct sum of $\mmm(S)$ (as additive categories). 
Hence we obtain that $MS$ is semisimple (cf. Theorem 5.13 of \cite{haco}); so it could also be called the category of 'numerical motives'. It is also easily seen that for any $M\in \dmcs_{\wchow=0}$ the kernel of the projection $\mm(S)(Z,Z) \to MS(Z,Z)$ is a nilpotent ideal (cf. Theorem 6.9 of ibid.).

3. So, we proved that $\hetl$ 'strictly respects weights' if $S$ is of finite type over $\sfp$; this is also true for $\hetlw$ if $S$ is of finite type over $\sq$. In \cite{wildat} 
a similar statement was established unconditionally for {\it Artin-Tate} motives over number fields.

4. Assume that $I\in \obj \dhsl$ is semisimple
(i.e. that it is a direct sum of shifts of semisimple objects of $\shs$) 
 and that $I$ is a retract of $I'=\hetl(N)$ for some $N\in \obj \dmcs$ (under our assumptions, this is easily seen to be equivalent to 
$I$ being {\it semisimple of geometric origin} in the sense of \S6.2.4 of \cite{bbd}). Then our assumptions yield:
$I$ is a retract of $\bigoplus_{n\in \z} \hetlz (M_n)[n]$ for some  $M_n\in\obj \bigoplus_{m\in \z}\mmm(S)$.

Indeed, it suffices to verify this statement for a simple $I\in \obj \shs$; hence we can assume that $N\in \obj \mm(S)$.
Then we have morphisms $I\to \hetlz(W_{\le m,MM}N)$ for all $m\in \z$. Since $I$ is simple, these morphisms are either zero or embeddings; hence  $I$ is a retract of one of  $\hetlz (Gr_m^W(N))$.

5. Using the results of (\S1.2 of) \cite{btrans} (some of which were stated above) one can derive some more consequences from the existence of a nice $t_l$.

\end{rema}

\subsection{Reducing the existence of a nice 
$t_l$ to the universal domain case}\label{snice}

Now we are ready to prove our second main result.

\begin{theo}\label{tvmts}

Suppose that for any 
 point of a very reasonable scheme $S$ the category $\dmck$ 
possesses a nice $t_l$. 
Then the same is also true for $\dmcs$.

\end{theo}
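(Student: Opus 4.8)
The plan is to reduce, by Noetherian induction on $S$ together with a gluing argument, to the unconditional degeneration statement Theorem \ref{tdeg}(II); the only conjectural input, namely $t_l$ over the points of $S$, enters already through Theorem \ref{tmts}, which gives that $t_l$ exists on $\dmcs$. So the whole content is the transversality of $t_l$ to $\wchow$, i.e. (Definition \ref{dnwd}(2)) producing a nice decomposition of every $X\in\dmcs^{t_l=0}$ at every level $m\in\z$. I would argue by Noetherian induction on $S$, assuming the theorem for every proper closed subscheme. Given $X$ and $m$, pick a generic point $K$ of $S$; by hypothesis $\dmck$ has a nice $t_l$, so the appropriate shift of $j_K^*X$ admits a nice decomposition over $K$, which I spread out to a nice decomposition of $j^*X$ over some dense open $j:U\to S$ — the $t_l$-half of the spreading-out being Lemma \ref{lcont} and the $\wchow$-half Theorem \ref{twchow}(III). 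Let $i:Z\to S$ be the complementary closed immersion; niceness of $t_l$ holds over $Z$ by the inductive hypothesis, since the points of $Z$ are points of $S$. We are then in the gluing situation of Proposition \ref{pcisdeg}(\ref{iglu}): $t_l$ on $\dmcs$ is glued from $t_l$ on $\dmc(U)$ and $\dmc(Z)$ (Lemma \ref{lfumts}(8), Proposition \ref{pglu}(II.2)), and $\wchow$ on $\dmcs$ is compatible with $\wchow$ on $\dmc(U)$ and $\dmc(Z)$ in the sense of Proposition \ref{pglu}(III) (Theorem \ref{twchow}(II)).

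Next I would reduce to the case $X=j_{!*}X'$ with $X'=j^*X$. Since $i_*$ ($=i_!$, as $i$ is proper) is $t_l$-exact (Lemma \ref{lfumts}(1)) and weight-exact — both left and right, by Theorem \ref{twchow}(II.1) — it carries nice decompositions over $Z$ to nice decompositions over $S$, so every object of $i_*\dmc(Z)^{t_l=0}$ is nice-decomposable; by Proposition \ref{pglu}(IV\ref{ibiext}) the object $X$ is built from $j_{!*}j^*X$ by two extensions in $\hrtl$ by such objects, and Proposition \ref{ptrans}(I) — closure of the class of nice-decomposable objects under the operation (\ref{ecom}), which in particular covers extensions in the heart — reduces us to $j_{!*}X'$. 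Spreading-out furnished a short exact sequence $0\to Y_{\le m}\to X'\to Y_{\ge m+1}\to 0$ in $\dmc(U)^{t_l=0}$ with $Y_{\le m}\in\dmc(U)_{\wchow\le m}$ and $Y_{\ge m+1}\in\dmc(U)_{\wchow\ge m+1}$; applying $j_{!*}$ produces a complex $j_{!*}Y_{\le m}\hookrightarrow j_{!*}X'\twoheadrightarrow j_{!*}Y_{\ge m+1}$ (mono, epi by Proposition \ref{pglu}(IV\ref{imonoepi})) whose middle homology lies in $i_*\dmc(Z)^{t_l=0}$ (Proposition \ref{pglu}(IV\ref{icohom})), hence is nice-decomposable by the inductive hypothesis over $Z$. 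Provided one knows $j_{!*}Y_{\le m}\in\dmcs_{\wchow\le m}$ and $j_{!*}Y_{\ge m+1}\in\dmcs_{\wchow\ge m+1}$ — so that each carries the tautological nice decomposition at level $m$ — a last application of Proposition \ref{ptrans}(I) to this complex yields a nice decomposition of $j_{!*}X'$, completing the induction.

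Everything thus comes down to the assertion that $j_{!*}$ respects Chow-weights, which is the point flagged in the remark after Proposition \ref{pglu}, and this is where the equicharacteristic hypothesis is needed. For $j_{!*}Y_{\le m}$ I would use the gluing description of $\wchow$ (Proposition \ref{pglu}(III)): $j^*j_{!*}Y_{\le m}\cong Y_{\le m}\in\dmc(U)_{\wchow\le m}$ by Proposition \ref{pglu}(IV\ref{ilift}), while $i^*j_{!*}Y_{\le m}\cong\tau_{\le -1}i^*j_*Y_{\le m}$ by Proposition \ref{pglu}(IV\ref{iim}); Theorem \ref{tdeg}(II.2) — applicable since $Y_{\le m}\in\dmc(U)^{t_l=0}$ has perverse \'etale realization and lies in $\dmc(U)_{\wchow\le m}$ — bounds the weights of $i^*j_*Y_{\le m}$, so by the unconditional weight-exactness of $\hetl$ (Proposition \ref{phuwe}(\ref{ihuw})) and the criterion of Proposition \ref{phuwe}(\ref{itrus}) one obtains $\hetl(i^*j_{!*}Y_{\le m})\in\dhzl_{w\le m}$; finally, niceness over $Z$ (inductive hypothesis) lets one read off $i^*j_{!*}Y_{\le m}\in\dmc(Z)_{\wchow\le m}$ from its \'etale realization (Proposition \ref{pcor}(II) and its characteristic-$0$ analogue). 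The case of $j_{!*}Y_{\ge m+1}$ is dual, via $i^!j_{!*}\cong\tau_{\ge 1}i^!j_!$ and the other half of Theorem \ref{tdeg}(II.2). I expect this translation step to be the main obstacle: it is precisely what rests on Theorem \ref{tdeg}(II) (hence, over $\sfp$, on Corollary 5.3.2 of \cite{bbd}, and over $\sq$ on the variant of Huber in \cite{huper}), and for mixed-characteristic $S$ it would require Conjecture \ref{cdeg} — which is why the statement is restricted to equicharacteristic $S$. As a byproduct one recovers, combining transversality with Proposition \ref{ptrans} and Corollary \ref{ctvs}, the weight filtration on $\mm(S)$ and the semisimplicity of the pure subcategories $\mmm(S)$.
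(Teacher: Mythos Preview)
Your proposal is correct and follows essentially the same route as the paper: Noetherian induction, spreading out a nice decomposition from a generic point to an open $U$ via Lemma \ref{lcont} and Theorem \ref{twchow}(III), then gluing via Proposition \ref{ptrans}(I) together with Proposition \ref{pglu}(IV\ref{ibiext},\ref{icohom},\ref{imonoepi}), with the key input being that $j_{!*}$ respects Chow-weights, which you correctly reduce to Theorem \ref{tdeg}(II.2) and Proposition \ref{pglu}(IV\ref{iim}).

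The only divergence is in the very last step. To pass from the \'etale weight bound on $i^*j_{!*}Y_{\le m}$ (resp.\ $i^!j_{!*}Y_{\ge m+1}$) back to a Chow-weight bound on the motif over $Z$, you invoke sheaf-theoretic weights (Proposition \ref{phuwe}) and Proposition \ref{pcor}(II), together with an unstated characteristic-$0$ analogue. The paper instead uses Proposition \ref{pcor}(I4), which is formulated directly in terms of the Chow-weight filtration $W_{m+n}\hetln$ and holds over any $Z$ with nice $t_l$; this meshes immediately with the output of Theorem \ref{tdeg}(II.2) and avoids the auxiliary reduction to finite type over $\sfp$ (or the passage through $\hetlw$) that your citation of Proposition \ref{phuwe} and Proposition \ref{pcor}(II) implicitly requires. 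Your route still works, but the paper's is cleaner here.
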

\begin{proof} 
By Theorem \ref{tmts}, 
$t_l$ for $\dmcs$ exists. It remains to verify that $t_l$ is transversal to $\wchow$.
For any (fixed) $M\in \dmcs^{t_l= 0}$ and $m\in \z$ we should verify the existence of  
a 
nice decomposition of $M$ (see Definition \ref{dnwd}).

Again, we apply the Noetherian induction, and assume that the statement is fulfilled over any proper closed 
subscheme of $S$.

Let $K$ be a generic point of $S$. 
Since $j_K^*[-d]$ is  $t$-exact and $j_K^*$ is weight-exact, a nice choice of (\ref{ewd}) (with the corresponding $m_K=m-d$) 
 exists for $j_K^*M[-d]$ (in $\dmck$; see Theorem \ref{twchow}(III)).
By loc. cit. 
and Lemma \ref{lcont}, there exist an open 
embedding $j:U\to S$ ($U$ contains $K$) 
along with a nice choice 
\begin{equation}\label{efux}
A\stackrel{f'}{\to} j^*M\stackrel{g'}{\to} B
\end{equation}
of (\ref{ewd}). 

We verify that this choice can be lifted to a one for $M$.
 We apply $j_{!*}$ to (\ref{efux}). Since $j_{!*}$ preserves monomorphisms and epimorphisms (see Proposition \ref{pglu}(IV\ref{imonoepi}), we
 obtain a three-term complex as in (\ref{ecom}) (i.e. $f=j_{!*}f'$ is monomorphic, and $g=j_{!*}g'$ is epimorphic). The  
middle-term homology object $H_{mid}$ of the complex obtained belongs to $i_*\dmc(Z)$ by part IV\ref{icohom} of loc. cit. 
 Since $i_*$ is $t$- and weight-exact, the inductive assumption yields that a nice choice of (\ref{ewd}) exists
for $H_{mid}$. 
 Now suppose that
  $j_{!*}(W_{\ge m+1,MM}j^*M)\in \dmcs_{\wchow\ge m+1}$ and $j_{!*}(W_{\le m,MM}j^*M)\in \dmcs_{\wchow\le m}$. Then we can choose 'trivial' 
  nice decompositions for these objects;
hence Proposition \ref{ptrans}(I) would yield that a nice 
decomposition exists for $j_{!*}j^*M$. Now, applying
Proposition \ref{ptrans}(I) again along with Proposition \ref{pglu}(IV\ref{ibiext}) we obtain that a nice decomposition exists for $M$ also.

Hence it remains to verify that $j_{!*}$ maps $\dmcu^{t_l=0}\cap 
\dmcu_{\wchow\ge m+1}$ into $\dmcs_{\wchow\ge m+1}$, and maps $\dmcu^{t_l=0}\cap 
\dmcu_{\wchow\le m}$ into $\dmcs_{\wchow\le m}$. 

We fix some $M\in \dmcu^{t_l=0}\cap\dmcu_{\wchow\ge m+1}$ (resp. $M\in \dmcu^{t_l=0}\cap \dmcu_{\wchow\le m}$). Since $j^*j_{!*}M\cong M$, it suffices to verify that $i^! j_{!*}M\in \dmc(Z)_{\wchow\ge m+1}$  (resp. $i^* j_{!*}M\in \dmc(Z)_{\wchow\le m}$). 

The inductive assumption for $Z$ reduces the latter fact to 
a certain calculation of weight filtrations for $\hetln$ of the corresponding motives; see Proposition \ref{pcor}(I4). In this form the statement follows immediately from Proposition \ref{pglu}(\ref{iim}) and Theorem \ref{tdeg}(II2) (along with Theorem \ref{tetre}).

\end{proof}

\begin{rema}\label{rcompat}

1. Our arguments demonstrate that the notions of weight structure and of its transversality with $t$-structures are really important 
for the study of the 'weight filtration' of $\dmcs^{t_l=0}$ (cf. \S\ref{sconj} below). Indeed, it  seems that one cannot apply our gluing argument in the setting of filtered abelian categories (though  possibly one could find a way to apply some of the corresponding arguments of \cite{haco} in our context). 

2. In contrast to the setting of the Theorem \ref{tmts}, we cannot prove the niceness of $t$ when $S$ is not very reasonable (without assuming Conjecture \ref{cdeg} for it). The problem is that the 'weight-exactness' of $j_{!*}$ does not follow from the (Noetherian) inductive assumption considered in the proof of the theorem. Indeed, let $S=\spe \z_{(p)}$ (for a prime $p\neq l$); then one can glue $t_l(\spe \mathbb{F}_p)$ with any 'shift' of $t_l(\spe \q)$ (here we assume that $t_l(\spe \mathbb{F}_p)$ and $t_l(\spe \q)$ exist, and consider $(\dmc(\spe \q)^{t'\le 0},\dmc(\spe \q)^{t'\ge 0})= (\dmc(\spe \q)^{t_l\le i},\dmc(\spe \q)^{t_l\ge i})$ for any $i\in \z\setminus \ns$). Then the niceness of $t_l$ over $\spe \q$ is equivalent to the niceness of $t'$; yet it seems highly improbable for  $j_{!*}$ to be weight-exact for the weight structure obtained via this 'shifted gluing'. Hence in order to control the niceness of $t_l$ for $S$ in this case, one needs some 
'extra' information on it. It seems quite reasonable to control motives via their homology; to this end we have  
to extend Theorem \ref{tdeg}(II) to this case (cf. some alternative arguments in \S7 of \cite{wildiat}). 

\end{rema}

Now we prove that it suffices to verify the niceness of $t_l$ over universal domains (only).

\begin{pr}\label{pcmtvs}
1. Suppose that a nice 
$t_l$ exists 
over a universal domain $K$ of  characteristic $p>0$. 
Then 
a nice $t_l$ exists over any very reasonable $S/\sfp$. 

2. Suppose that  $t_l$ 
exists over the field of complex numbers. Then a nice $t_l$ exists over any 
very reasonable 
 $\sq$-scheme. 

\end{pr}

\begin{proof} 
1. The proof is rather similar to 
that of Corollary \ref{ctvs} (along with Proposition \ref{pred}).  
We will only 
sketch it outlining the difference.

Again, it suffices to verify: if the transversality property is fulfilled for motives over a field $L$, then it is fulfilled over any its algebraically closed subfield, and over its subfield $K$ such that the extension $L/K$ is algebraic. 

Both of these statements can be proved using the arguments  in the proof of loc. cit. 
Indeed, by Proposition \ref{ptrans}(III), we should verify that
for any $M\in \obj \dmck$ we have $(W_{m}H_0^{t_l})(M)\in \dmck_{\wchow\le m}$ and $M/(W_{m}H_0^{t_l})(M)\in\dmck_{\wchow\ge m+1}$  
(for all $m\in \z$).
This can be easily done by combining the  arguments from the proof of Corollary \ref{ctvs}(1) with 
Theorem \ref{twchow}(VI) (see also Remark \ref{reas}(I.1));
note that $f_*=f_!$ is weight-exact if $f$ is a finite morphism.

2. The statement is immediate from the previous assertion along with Proposition 1.5 of \cite{beiln}. 
\end{proof}

\begin{rema}\label{rnicesub}
1. It is also easily seen that if $t_l$ is nice over $S$, it is also nice over all of its subschemes and residue fields. Indeed, it suffices to note that for any open immersion $i$ and (the complementary) closed embedding $j$ the functors $i_*$ and $j^*$ are exact with respect to $t_l$ and $\wchow$, whereas $i_*$ is a full embedding, $j^*$ is a localization functor, and $\imm i_*=\ke j^*$.

Certainly, this observation is far from being very exciting; yet it will 
make some of the formulations in \S\ref{sdecomp} nicer.
 
 2. Remark 2.1.4 of \cite{btrans} describes a funny way to produce new examples of transversal weight and $t$-structures (out of 'old' ones for a triangulated $\cu$). To this end one should consider the so-called 'truncated categories' $\cu_N$ (that are 'usually' defined for all $N\ge 0$). 
 For our $t,w,\ \cu=\dmcs$, we have $\cu_0=K^b(\chow(S))$. So (if certain 'standard' conjectures as listed in \S\ref{sconj} below hold) this category shares several nice properties with $\dmcs$;  
this statement does not seem to be obvious.

\end{rema}

\section{Supplements}\label{ssupl}

In \S\ref{sconj} we verify that the existence of $t_l$ and  
its niceness (over a very reasonable scheme $S$) follow from certain (more or less) 'standard' motivic conjectures (over algebraically closed fields;
 here we use certain lists of those taken from \S1 of \cite{beilnmot} and \S2 of \cite{ha3}). 
 
 In \S\ref{sdecomp} we note that our results yield a  certain 'motivic Decomposition Theorem' (modulo the conjectures mentioned). 
In particular, we characterize pure motives over $S$ in terms of those over its residue fields. This enables us to calculate $K_0(\dmcs)$.

In \S\ref{sindl} we  extend (somehow) our results  from the case of $\szol$-schemes to the case of $\spe\z$-ones, 
 and  prove that the $t$-structure obtained does not depend on the choice of the corresponding $l$'s. Here we need to assume that  the numerical equivalence of cycles is equivalent to $\qlp$-adic homological one (for any  $l'\in \p$ and over universal domains of characteristic $\neq l',0$).

\subsection{Relating 
the existence of a (nice)  $t_l$ with  'standard' motivic conjectures} 
\label{sconj}

First we address the question: which (more or less) 'classical' motivic conjectures ensure the existence of 
$t_l$ over $S$, that is nice if $S$ is a very reasonable scheme. By  virtue of the results above, to this end it suffices to treat motives over universal domains only. 
So we consider motives over a universal domain $K$ of characteristic $p\neq l$ ($p$ is either a prime or $0$); recall that $\dmc(K)\cong \dmgm(K)$. None of the results of this paragraph are essentially original (unless we combine them with some  of other our results).

\begin{pr}\label{pbeil}
The existence of a nice $t_l$ for $\dmgm(K)$ is equivalent to (the conjunction of widely believed to be true) conjectures A--C of \S1.2 of \cite{beilnmot}.
\end{pr}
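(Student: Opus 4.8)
I would prove Proposition \ref{pbeil} by unwinding the definitions of "$t_l$ exists" and "$t_l$ is nice" into a list of statements about Voevodsky/Beilinson motives over the universal domain $K$, and then match that list termwise with Conjectures A--C of \S1.2 of \cite{beilnmot}. Recall (Proposition \ref{pcisdeg}(\ref{ivoemot})) that $\dmc(K)\cong\dmgm(K)$, so everything happens inside Voevodsky's category, where $\dmgm(K)=\lan\chow(K)\ra$ and $\wchow$ is the bounded weight structure with heart $\chow(K)$ (Theorem \ref{twchow}(I)). The existence of $t_l$ means exactly that the pair $(\dmc(K)^{t_l\le 0},\dmc(K)^{t_l\ge 0})$, defined by $\ql$-adic \'etale homology lying in non-positive (resp.\ non-negative) perverse -- here just canonical -- degrees, is a $t$-structure; niceness means in addition that this $t$-structure is transversal to $\wchow$ in the sense of Definition \ref{dnwd}(2).

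\textbf{Reduction to Chow--K\"unneth plus a realization statement.} The key observation is that, by Proposition \ref{ptrans}(V) (and (IV)), transversality of $\wchow$ with a bounded $t$-structure whose heart one is trying to construct is equivalent to producing a family of semi-simple subcategories $\aum\subset \hrt_{t_l}$ with $\cu_{w=0}=\bigoplus_m \obj\aum[-m]$; and $\cu_{w=0}=\obj\chow(K)$. So the content of "a nice $t_l$ exists" is precisely: (i) every Chow motive over $K$ splits as $\bigoplus_m h_m(-)[-m]$ with $h_m$ landing in a semi-simple abelian category $\aum$ of "pure motives of weight $m$" -- this is the Chow--K\"unneth decomposition of the diagonal together with semi-simplicity of numerical motives (Jannsen), i.e.\ the combination encoded in Conjectures A and B of \cite{beilnmot}; and (ii) the \'etale realization $\hetl$ is conservative on $\dmgm(K)$ and induces an exact, faithful functor on the resulting abelian hearts compatibly with the canonical $t$-structure on $\ql$-sheaves -- this is the "motivic $t$-structure via realizations" content of Conjecture C of \cite{beilnmot}. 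I would spell out carefully that once (i) holds one defines $\aum$, invokes Proposition \ref{ptrans}(IV) to \emph{get} a transversal pair $(t',w')$ with $w'=\wchow$ (by Proposition \ref{pbw}(\ref{ipost},\ref{iuni}), since $\wchow$ is bounded), and then (ii) identifies $t'$ with the $t_l$ of Definition \ref{dmts}: the heart of $t'$ is the smallest extension-stable subcategory of $\dmgm(K)$ containing the $\aum$, and one checks via $\hetl$ (using its $t$-exactness and conservativity, Lemma \ref{lfumts} and Proposition \ref{pred}) that this coincides with $\dmgm(K)^{t_l=0}$.

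\textbf{The converse direction and bookkeeping.} For the other implication I would argue that if a nice $t_l$ exists then Proposition \ref{pcor}(I5) gives the Chow--K\"unneth splitting $\chows=\bigoplus MM_m(S)[-m]$ and Proposition \ref{pcor}(I1) gives semi-simplicity of $MM_m$, recovering Conjectures A and B; and the very definition of $t_l$ via $\hetl$ together with Remark \ref{rbound}(1) (boundedness, hence non-degeneracy, hence conservativity of $\hetl$ on the heart) recovers the realization content of Conjecture C. The bulk of the write-up is then careful cross-referencing: matching the precise formulations of A, B, C in \cite{beilnmot} (which are phrased for homological motives / cycles over universal domains) with the statements above, and noting that over a universal domain homological and numerical equivalence questions only enter through Conjecture B as stated.

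\textbf{Expected main obstacle.} The genuine mathematical work is essentially all imported -- Proposition \ref{ptrans} and the gluing machinery do the heavy lifting -- so the hard part is not a deep argument but a \emph{faithful translation}: making sure that "strictly compatible with the perverse $t$-structure" as in Definition \ref{dmts} matches the realization axiom in \cite{beilnmot} on the nose (e.g.\ that no independence-of-$l$ or weight-monodromy input sneaks in, which is why the statement is confined to a single universal domain), and that the semi-simplicity in Proposition \ref{ptrans}(IV) is exactly Jannsen's semi-simplicity of numerical motives rather than something formally weaker. I expect the subtlety to lie in checking that the abelian category produced by Proposition \ref{ptrans}(IV) -- a priori just "the smallest extension-stable subcategory containing $\cup\aum$" -- is genuinely the category cut out by $\hetl$-degrees, i.e.\ that there are no "extra" extensions; this is where one must invoke conservativity and exactness of $\hetl$ on hearts, which is itself part of Conjecture C, so the argument is consistent but the equivalence must be stated with Conjecture C doing precisely that job.
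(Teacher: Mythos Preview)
Your plan has the roles of Conjectures A--C of \cite{beilnmot} backwards, and this is not just a labeling issue: it changes which step is an assumption and which must be proved. In Beilinson's formulation (as the paper's proof recalls), Conjectures A and B together assert precisely that $\ql$-adic \'etale (co)homology is strictly compatible with a $t$-structure $t_{MM}$ on $\dmgm(K)$; after composing with Poincar\'e duality (which you do not mention, but which is needed to pass from Beilinson's cohomological phrasing to the homological $t_l$ of Definition \ref{dmts}), this says exactly that $t_l$ exists. Conjecture C is the semi-simplicity statement: the $t_l$-homology of motives of smooth projective varieties is semi-simple in $\hrtl$. You have assigned the realization content to C and the Chow--K\"unneth/semi-simplicity content to A+B; the paper (and Beilinson) do the opposite.

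This matters because Chow--K\"unneth is \emph{not} part of any of A, B, C as input; it is \emph{derived}. The paper's key step, which your plan omits, is to invoke Proposition 1.4(ii) of \cite{beiln}: given that $t_l$ exists (A+B) and that the $t_l$-homology of a smooth projective $P$ is semi-simple (C), Beilinson's argument produces the Chow--K\"unneth projectors, so that each $H_i^{t_l}(M(P))[-i]$ is itself a Chow motive. Only then can one feed the resulting $\aum=\dmck^{t_l=0}\cap\dmck_{\wchow=m}$ into Proposition \ref{ptrans}(V) to conclude niceness. Your plan instead treats Chow--K\"unneth as a hypothesis and then tries to identify the resulting $t'$ with $t_l$ using conservativity of $\hetl$; but conservativity of $\hetl$ is not part of Conjecture C (it is rather a consequence of the whole package), and in any case you would still need the Beilinson step to know that the semi-simple pieces live in $\chow(K)$. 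Also, the semi-simplicity in question is that of the conjectural pure motives $\mmm(K)$, not Jannsen's unconditional semi-simplicity of numerical motives; conflating the two is a separate imprecision.
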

\begin{proof}
Conjectures A and B of loc. cit. state that $\ql$-adic \'etale cohomology on
Voevodsky's $\dmge(K)$
is strictly compatible with a certain $t$-structure  for it.
 Since \'etale cohomology is compatible with Tate's twists, this $t$-structure can be extended to the whole $\dmgm(K)=\dmck$; we will denote this extension by 
  $t_{MM}$.
 
 Now,
it is easily seen that $t_{MM}=t_l$. Indeed, composing the \'etale cohomology  with Poincare duality for $\dmck$ 
 one obtains (a certain version of) \'etale homology for it. Note here that the Poincare duality for $\dmgm(K)$  exists for $K$ of any characteristic (by an  argument of M. Levine described in Appendix B of \cite{hubka}). 

Next, Conjecture C of \cite{beilnmot} states that the $t_{MM}$-homology objects for motives of smooth projective varieties (over $K$) with respect to $t_l$ are semisimple in 
$\hrtl$. Then the same assertion is true for arbitrary Chow motives ('in our sense'; here we apply the compatibility of $\hetl$ with Tate twists). Now,  Proposition 1.4(ii) of \cite{beiln}
yields the existence of the corresponding Chow-Kunneth decompositions (see Remark \ref{rchk});
hence the functors $H^{t_l}_i[-i]$ 
 respect  Chow motives. 
 Thus  $t_l$ is nice (over $K$) by Proposition \ref{ptrans}(V).

The converse implication is even easier (and is not really interesting for us).

\end{proof}

\begin{rema}\label{rind}
1. Here and 
 throughout this paper we use the following  observation: though the author doesn't know whether all  possible versions of the ($\ql$-) \'etale homology realization for motives over a field $K$ are isomorphic,
one can still be sure that all of them yield the same $t_l$. Indeed, we have spectral sequences $T_{\wchow}(\hetlz,-)$ (for any version of $\hetlz$) that degenerate at $E_2$ (in this case; see Theorem \ref{tdeg}(II.1)). Hence  $\hetlm(M)$ vanishes (for some $m\in \z$) if and only if $E_2^{p,m-p}(T)=0$ for any $p\in \z$. Now, in order to calculate $E_2^{p,m-p}(T)$ it suffices to know the restriction of $\hetl$ to $\chow(K)$ (see Proposition \ref{pwss}(I)), and certainly the latter does not depend on the choice of the version for $\hetl$.

2. Combining Proposition \ref{pred}(2) with this spectral sequence argument, we obtain that the conservativity of the \'etale realization of motives (over fields or over general $\zol$-base schemes) follows from the following conjecture: if $K$ is an algebraically closed field of characteristic distinct from $l$, then any morphism of $\chow(K)$-motives that yields an injection on their \'etale (co)homology, splits (cf. Proposition 7.4.2 of \cite{mymot}). Note that the latter conjecture easily follows from the niceness of $t_l$ over $K$ (since $t_l$ 'splits' Chow motives into direct sums of objects of semisimple categories $\mmm(S)[-m]$, whereas $\hetl$ is conservative on $\mmm(S)[-m]$). Now, by the virtue of the results below, the niceness of $t_l$ follows from   'standard' conjectures. Hence, there is a good reason to believe this ('Chow-splitting') conjecture.

Respectively, it could be interesting to study the connection of our results with those of \cite{ayocons}.
\end{rema}

Now we 
verify (briefly) the following statement: the existence of a nice $t_l$ 
over $K$  follows
from the conjectures stated in \S2 of \cite{ha3}. 
Being more precise, one needs the standard conjecture D of loc. cit. (that $\ql$-homological equivalence of cycles 
coincides with numerical one), and Murre's conjectures 
A,D, and Van.

First we note that Murre's conjecture A yields the 
existence of Chow-Kunneth decompositions of motives of  smooth projective  varieties (over $K$) i.e. any  such motif  can be decomposed (in $\chow(K)$) into a direct sum of  motives each of those has only one non-zero $\ql$-adic (co)homology group. Here and below we can consider $\hetl$ instead of \'etale cohomology; cf. the proof of Proposition \ref{pbeil}. 
 Next, (the proof of) Proposition 2.4 of \cite{ha3} implies that the conjectures mentioned imply all the remaining Murre's conjectures (we can apply loc. cit. here since the 
Lefschetz type standard conjecture B used in its proof follows from standard conjecture D by the main result of \cite{smi}).  We define $\mmm(K)$ as the subcategory of $\chow[m]\subset \dmgm(K)$ consisting of objects whose $\ql$-\'etale (co)homology is concentrated in degree $0$.
 
 Proposition 2.3 of 
\cite{ha3} yields that the categories $\mmm(K)$ are isomorphic to the corresponding pieces of the category of $\ql$-\'etale homological motives. Conjecture D embeds them into the category of numerical motives (which is semisimple by the main result of \cite{ja}); hence they are semisimple also. 
  Next, the arguments used for the proof of Proposition 2.9 of \cite{ha3} 
 yield for $\mmm(K)$ the orthogonality 
 conditions of Proposition \ref{ptrans}(IV). 
 Besides, by Lemma 1.1.1(6) of \cite{btrans} these conditions also yield that the category
 $C\subset \chow(K)$ with
 $\obj C=\bigoplus \obj \mmm(K)[-m]$ is idempotent complete; hence $C=\chow(K)$.  Since $\lan \chow(K)\ra =\dmgm(K)$ (see Proposition \ref{pcisdeg}(\ref{ivoemot})), Proposition \ref{ptrans}(IV) yields that $\dmgm(K)$ possesses a  $t$-structure $t_{MM}$ that is transversal to $\wchow$. Since all objects of $\hrt_{MM}$ possess filtrations whose factors belong to $\mmm(K)$ (see Proposition \ref{ptrans}(II4)), we obtain that 
 $\obj \hrt_{MM}\subset \dmc(K)^{t_l=0}$; hence $\hetl$ is $t$-exact with respect to $t_{MM}$. Moreover, Murre's conjecture Van yields that $\hetl$ does not kill non-zero objects of $\mmm(K)$ (since it does not kill non-zero Chow motives). 
 Then Proposition \ref{ptrans}(II4) also implies that  $\obj \hrt_{MM}= \dmc(K)^{t_l=0}$; cf. the proof of Proposition \ref{pcor}(I4). Hence $t_{MM}=t_l$ (see Remark \ref{rts}(3)), and we obtain the result desired.

\begin{rema}\label{rha3}

1. Alternatively,   one 
can prove the existence of the motivic $t$-structure for $\dmgm(K)$
using
the  arguments from the  proof of Theorem 3.4 of \cite{ha3}, whereas (the proof of) Proposition 2.8 of ibid. allows us to verify the conditions of Proposition \ref{ptrans}(V) that ensure (in this case) that $\wchow(K)$ is transversal to $t_l$.

2. For a characteristic $0$ field $K$ one can apply the results of Corti and Hanamura directly (after replacing \'etale cohomology by 
$\hetl$ using Poincare duality). Indeed, it was proved in \S4 of \cite{mymot} that in this case Hanamura's triangulated category of motives is isomorphic to $\dmgm(K)^{op}$.




3. It  seems that the existence of 
$t_l$ without any additional assumptions does not imply its niceness  (at least, easily) over positive characteristic fields (one also needs to assume the Hodge standard conjecture or the conjectures mentioned above  for this matter). 
Over (very reasonable $\sq$-schemes and) characteristic $0$ fields one does not need any extra assumptions; cf. Proposition \ref{pcmtvs}(2) (and also Proposition 2.2 of \cite{beiln}).
\end{rema}

\subsection{Our 'motivic Decomposition Theorem'}\label{sdecomp}

Our results easily yield a motivic version of the celebrated Topological Decomposition Theorem (for perverse sheaves; see Remark \ref{rdect}). In particular, we characterize pure motives (see Remark \ref{rhaco}(1)) 
'pointwisely'.
In order to formulate our results, 
we need a certain intermediate image functor for $j_K$, where $K\in \sss$.

First let $K$ be a generic point of $S$ of dimension $d$ (see \S\ref{scheme}). Suppose that a nice $t_l$ exists over $S$; then it also exists over $K$ (see Remark \ref{rnicesub}(1)). We define $j^d_{K!*}$ for $M\in \mmm(K)$ ($m\in \z$) in the following way. First we lift $M[d]$ to a certain $M_U\in \mm_{m+d}(U)$ for some open $U\subset S$, $K\in U$; here we use Lemma \ref{lcont} and Theorem \ref{twchow}(III) (cf. the proof of Theorem \ref{tvmts}). $M_U$ is semisimple in $\mm(U)$ (see Proposition \ref{pcor}(I6));
and we take $M'_U$ being the sum of  those components of $M_U$ that are not killed by $j_K^{U*}$ (for the corresponding morphism $j_K^{U}:K\to U$; note that $M'_U$ is  determined by $M_U$ uniquely up to an isomorphism). Lastly, we set   $j^d_{K!*}M=j_{!}M'_U$, where $j:U\to S$ is the corresponding open immersion.

Now, let $K$ be an arbitrary point of $S$ of dimension $d$, whose closure is $Z\subset S$; $i:Z\to S$ is the corresponding embedding. Then we lift $M[d]$ to $\mm_{m+d}(Z)$ using the procedure described above, and then apply $i_*$ in order to obtain $j^d_{K!*}M$. Certainly, here we use $t_l-$ and $\wchow$-exactness of $i_*=i_!$. Besides, note: if we denote the composite immersion $U\to Z\to S$ by $j$, then we would have 
\begin{equation}\label{eiimk}
j^d_{K!*}M=\imm(H_0^{t_l}j_!M'_U\to H_0^{t_l}j_*M'_U) \end{equation} in this case (also). 

\begin{lem}\label{liim}
1. $j^d_{K!*}M$ does not depend on any choices (if a nice $t_l$ exists over $S$). Moreover, $j^d_{K!*}$ 
yields a full embedding (of categories).

2. $j^d_{K!*}M$ is functorially characterized by the following condition:  it is a semisimple 
lift of $M$ to $\mm_{m+d}(S)$  none of whose direct summands are killed by $j_K^*$. 
\end{lem}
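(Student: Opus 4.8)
The plan is to reduce at once to the case where $K$ is the generic point of an irreducible $S$ (so that $d=\dim S$). For a general point $K$ with closure $i:Z\to S$, the morphism $j_K$ is the composite of the inclusion $j_K^Z:K\to Z$ of the generic point of $Z$ with $i$; hence $j_K^*=j_K^{Z*}\circ i^*$ and $i^*i_*\cong\id$. Since $i_*=i_!$ is a full embedding that is exact for both $t_l$ and $\wchow$ (Proposition \ref{pcisdeg}(\ref{ipur}), Theorem \ref{twchow}(II1), and Proposition \ref{pglu}(II2) applied to the gluing data of Proposition \ref{pcisdeg}(\ref{iglu})), and since by (\ref{eiimk}) the object $j^d_{K!*}M$ over $S$ equals $i_*$ of the corresponding object computed over $Z$, both assertions over $S$ follow from the same assertions over $Z$. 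So from now on $K$ is the generic point of the irreducible $S$, and (\ref{eiimk}) reads $j^d_{K!*}M=(j_U)_{!*}(M_U)$, where $j_U:U\to S$ is an open immersion with $K\in U$ and $M_U\in\mm_{m+d}(U)$ (the object $M'_U$ of the construction) is a semi-simple lift of $M[d]$ all of whose simple summands restrict non-trivially under $j_K^U:K\to U$; such a $U$ and such an $M_U$ exist by Lemma \ref{lcont}, Theorem \ref{twchow}(III), and Proposition \ref{pcor}(I6).

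First I would check that $(j_U)_{!*}(M_U)$ genuinely lies in $\mm_{m+d}(S)$. Membership in $\dmcs^{t_l=0}$ is built into the definition of $j_{!*}$, while the fact that it has weight exactly $m+d$ (and not merely lies in the $t_l$-heart) is precisely the weight-exactness of $j_{!*}$ on $\dmcu^{t_l=0}\cap\dmcu_{\wchow=m+d}$ that was established inside the proof of Theorem \ref{tvmts}, via Proposition \ref{pglu}(IV\ref{iim}) and Theorem \ref{tdeg}(II2). Semi-simplicity holds because $j_{!*}$ is additive and sends simple objects to simple ones (Proposition \ref{pglu}(IV\ref{isso})); and the factorization $j_K=j_U\circ j_K^U$ together with $j_U^*j_{!*}\cong\id$ (Proposition \ref{pglu}(IV\ref{ilift})) gives at once both $j_K^*\,(j_U)_{!*}M_U\cong M[d]$ and the fact that no simple summand of $(j_U)_{!*}M_U$ is killed by $j_K^*$.

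Independence of the choices, and part~2, I would obtain together. Given two data $(U_1,M_{U_1})$ and $(U_2,M_{U_2})$ as above, restrict both to a small common open $W\subset U_1\cap U_2$; since $\dmck$ is the $2$-colimit of the categories $\dmc(U)$ (Proposition \ref{pcisdeg}(\ref{icont})), the isomorphism $j_K^{W*}(M_{U_1}|_W)\cong M[d]\cong j_K^{W*}(M_{U_2}|_W)$ descends, after shrinking $W$, to an isomorphism $M_{U_1}|_W\cong M_{U_2}|_W$ in $\mm_{m+d}(W)$; here one uses that the restriction to a smaller open of a simple object not killed by the map to $K$ is again simple and not killed by it, since such an object equals the intermediate extension of its restriction and $j_{!*}$ reflects indecomposability. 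Transitivity of intermediate extensions for open immersions (an easy and well-known consequence of the characterizations in \S1.4 of \cite{bbd}, cf.\ Proposition \ref{pglu}(IV)) then yields $(j_{U_1})_{!*}M_{U_1}\cong(j_W)_{!*}(M_{U_1}|_W)\cong(j_W)_{!*}(M_{U_2}|_W)\cong(j_{U_2})_{!*}M_{U_2}$, which is the independence of part~1. For part~2, let $N\in\mm_{m+d}(S)$ be semi-simple with $j_K^*N\cong M[d]$ and with no simple summand killed by $j_K^*$. For any open $U\ni K$ the canonical maps $H_0^{t_l}(j_U)_!(N|_U)\to N\to H_0^{t_l}(j_U)_*(N|_U)$, together with simplicity of the summands of $N$ and $j_U^*j_{!*}\cong\id$, force $N\cong(j_U)_{!*}(N|_U)$; moreover $N|_U$ is semi-simple by Proposition \ref{pcor}(I1), and after shrinking $U$ it is a lift of $M[d]$ of exactly the kind used above, whence $N\cong j^d_{K!*}M$ by the independence just proved. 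Finally, functoriality of $M\mapsto j^d_{K!*}M$ and the fact that it is a full embedding follow from the bijectivity of $\cu(j_{!*}X',j_{!*}Y')\to\eu(X',Y')$ (Proposition \ref{pglu}(IV\ref{issm})), combined with the same $2$-colimit descent applied to morphisms.

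The step I expect to be the main obstacle is the weight bookkeeping in the second paragraph: confirming that $(j_U)_{!*}(M_U)$ has weight exactly $m+d$ is not formal and rests on the weight-exactness of $j_{!*}$ obtained through Theorem \ref{tdeg}(II2). By contrast, the remaining points amount to organizing the $2$-colimit descent and quoting the standard properties of $j_{!*}$ recorded in Proposition \ref{pglu}(IV).
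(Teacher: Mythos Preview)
Your argument is correct, but it is considerably more elaborate than what the paper intends, and in one place you overestimate the difficulty.

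The paper's proof is a one-liner citing Proposition \ref{pglu}(IV\ref{issm}--\ref{isss}). The logical order is: establish the characterization (part~2) first, then independence of choices and the full embedding (part~1) are immediate consequences. Your separate argument for independence via shrinking to a common $W$ and invoking transitivity of $j_{!*}$ is valid, but redundant once the characterization is in hand: any two outputs of the construction satisfy the same characterizing property, hence are isomorphic. Similarly, full faithfulness is just Proposition \ref{pglu}(IV\ref{issm}) combined with the $2$-colimit description of $\dmck$, as you say.

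The point you flag as the main obstacle --- that $(j_U)_{!*}M_U$ lands in $\mm_{m+d}(S)$ rather than merely in $\mm(S)$ --- is in fact easier than you suggest, and does not require re-running the weight-exactness of $j_{!*}$ from the proof of Theorem \ref{tvmts}. Since we are \emph{assuming} $t_l$ is nice on $S$, every simple object of $\mm(S)$ already lies in some $\mm_r(S)$ (Proposition \ref{ptrans}(II3,4)). Now $j_{!*}M_U$ is a finite sum of simples (Proposition \ref{pglu}(IV\ref{isso})), none of which is killed by $j_U^*$; since $j_U^*$ is both $t$-exact and weight-exact (Lemma \ref{lfumts}(5), Theorem \ref{twchow}(II2)) and $j_U^*j_{!*}M_U\cong M_U\in\mm_{m+d}(U)$, each simple summand is forced into $\mm_{m+d}(S)$. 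So the weight bookkeeping comes for free from niceness over $S$, not from Theorem \ref{tdeg}(II2).

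In short: your route is sound, but the paper's route is shorter because it leans on the characterization (IV\ref{isss}) as the organizing principle and on the assumed niceness over $S$ (rather than the mechanism that produced it) for the weight statement.
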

\begin{proof} This is an easy consequence of Proposition \ref{pglu}(IV\ref{issm}-\ref{isss}). 
\end{proof}

\begin{rema}
Alternatively, one could try to apply here the (somewhat parallel)  arguments of \S5 of 
\cite{scholfcohom}. 
 Yet some  adjustments (along with certain results of \S2.3 of \cite{brelmot}) are certainly needed to do this.
\end{rema}

\begin{pr}\label{pdecomp}

Assume that  a nice $t_l$ exists over $S$. Then for any $m\in \z$ any object of $\mmm(S)$ can be decomposed as a direct sum of $j^{d_i}_{K_i!*}M_i$ for $K_i\in \sss$ being of dimension $d_i$, and $M_i\in \mm_{m-d_i}(K_i)$ 
 being indecomposable objects. This decomposition is unique up to an isomorphism. Moreover, $M_i\cong H_{-d_i}^{t_l}(j_{K_i}^*M)$, whereas $K_i$ can be characterized by the condition that $H_{-d_i}^{t_l}(j_{K_i}^*M)\neq 0$.

\end{pr}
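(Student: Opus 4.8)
The plan is to reduce, using that $\mmm(S)$ is semi-simple and that every object of it decomposes into simple objects uniquely up to isomorphism (Proposition~\ref{pcor}(I1,I6)) and that $j^d_{K!*}$ is additive and carries simple objects to simple ones (Lemma~\ref{liim}, Proposition~\ref{pglu}(IV\ref{isso})), to the following two assertions for a \emph{simple} object $N\in\mmm(S)$: (existence) $N\cong j^d_{K!*}M$ for some $K\in\sss$ of dimension $d$ and some simple $M\in\mm_{m-d}(K)$, with $M\cong H^{t_l}_{-d}(j_K^*N)$; and (uniqueness) such a triple $(K,d,M)$ is determined up to isomorphism by $N$. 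Throughout I would use that $\hetl$ is conservative on every $\dmc(-)$ that occurs --- this follows from Remark~\ref{rind} (niceness of $t_l$ over a field forces conservativity over it) applied to the residue fields of $S$, over which $t_l$ is nice by Remark~\ref{rnicesub}(1), together with Proposition~\ref{pred}(1) --- and that $\hetl$ is $t$-exact (Lemma~\ref{lfumts}), weight-exact (Proposition~\ref{phuwe}(\ref{ihuw})) and commutes with $j_K^*,\ j^*,\ i^*,\ i_*$ (Proposition~\ref{pcisdeg}(\ref{ietre})); in particular $\hetl$ sends $\mmm(-)$ to pure perverse sheaves of weight $m$, whence, by conservativity, $\mathrm{Supp}(N):=\{K\in\sss:j_K^*N\ne0\}$ is the set of points of a closed subscheme of $S$.

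I would prove existence by Noetherian induction, assuming it over every closed subscheme $\ne S$. If $\mathrm{Supp}(N)=Z\subsetneq S$, write $i\colon Z\to S$ and $j\colon S\setminus Z\to S$; then $j^*\hetl(N)=0$, so $j^*N=0$ by conservativity, and the gluing triangle of Proposition~\ref{pcisdeg}(\ref{iglu}) gives $N\cong i_*i^*N$. Now $i_*=i_!$ is fully faithful, $t$-exact (Lemma~\ref{lfumts}(1)) and weight-exact (Theorem~\ref{twchow}(II1)), with $i^*i_*=i^!i_*=\id$; since $t$-exact functors commute with $H^{t_l}_j$ for all $j$, the identity $i_*H^{t_l}_j(i^*N)=H^{t_l}_j(N)$ and full faithfulness force $i^*N\in\dmc(Z)^{t_l=0}$, while $i^*N\in\dmc(Z)_{\wchow\le m}$ (left weight-exactness of $i^*$) and $i^*N\cong i^!N\in\dmc(Z)_{\wchow\ge m}$ (right weight-exactness of $i^!$, using $i^!i_*=\id$). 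Hence $i^*N\in\mm_m(Z)$; the inductive hypothesis decomposes it, and applying $i_*$ --- which is exactly how $j^d_{K!*}$ for a point of $Z\subseteq S$ is built from the corresponding functor over $Z$, and which leaves the dimension of a point unchanged --- yields the desired decomposition of $N$.

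The remaining case $\mathrm{Supp}(N)=S$ is the crux. Here, for a generic point $K$ of $S$ the morphism $j_K$ is a filtered limit of open immersions (up to the nilpotent-invariance of $\dmc(-)$), so $j_K^*\cong j_K^!$ (Proposition~\ref{pcisdeg}(\ref{ipur}),(\ref{icont}),(\ref{ridmot})); then Lemma~\ref{lfumts}(6) shows $j_K^*[-d]$ is $t$-exact, where $d=\dim K$, so from $N\in\dmcs^{t_l=0}$ we get $j_K^*N\cong M[d]$ with $M:=H^{t_l}_{-d}(j_K^*N)$, and $M\ne0$ since $j_K^*N\ne0$. As $j_K^*$ is weight-exact, $j_K^*N=P[m]$ for some $P\in\chow(K)$, and comparing with $j_K^*N\cong M[d]$ and using Proposition~\ref{pcor}(I5) (so $P=\bigoplus_jH^{t_l}_j(P)[-j]$ with $H^{t_l}_j(P)\in MM_j(K)$) forces $M=H^{t_l}_{m-d}(P)\in\mm_{m-d}(K)$. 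Thus $N$ is a semi-simple lift of $M$ to $\mmm(S)$ whose (single) summand is not killed by $j_K^*$, so $N\cong j^d_{K!*}M$ by the characterisation of Lemma~\ref{liim}(2); and since $j^d_{K!*}$ is additive and preserves simplicity, $M$ is indecomposable. This settles existence, and the identification $M\cong H^{t_l}_{-d}(j_K^*N)$ is built in.

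For uniqueness it suffices, by Proposition~\ref{pcor}(I6), to recover $(K,d,M)$ from the isomorphism class of a simple $j^d_{K!*}M$: one checks (via $\hetl$, or directly from Proposition~\ref{pglu}(IV\ref{ilift})) that $\mathrm{Supp}(j^d_{K!*}M)=\overline{\{K\}}$ is irreducible of dimension $d$, so $d$ and its generic point $K$ are recovered, and then $M\cong H^{t_l}_{-d}(j_K^*j^d_{K!*}M)$ because $j_K^*j^d_{K!*}M\cong M[d]$. Finally, grouping the summands of $N$ by their associated point and using, through $\hetl$, the strict support estimates for intermediate extensions (a summand $j^{d'}_{K'!*}M'$ with $K'$ a proper specialization of $K_i$ contributes nothing in $t_l$-degree $-d_i$ after applying $j^*_{K_i}$), one obtains $H^{t_l}_{-d_i}(j^*_{K_i}N)\cong\bigoplus_{K_{i'}=K_i}M_{i'}$, and that a point $K$ occurs among the $K_i$ exactly when $H^{t_l}_{-\dim K}(j_K^*N)\ne0$. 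I expect the main obstacle to be precisely this interface with the theory of perverse sheaves --- making sure $\hetl$ is conservative on all relevant $\dmc(-)$ and faithfully transports the six-functor formalism, the identity $j_K^*\cong j_K^!$ at generic points, and the strict support conditions for $j_{!*}$ --- whereas the Noetherian bookkeeping and the verification that weights and shifts combine to the index $m-d$ are routine given Propositions~\ref{pcor}, \ref{ptrans}, \ref{pglu} and Lemma~\ref{liim}.
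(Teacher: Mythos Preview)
Your proof is correct and follows essentially the same route as the paper's: reduce to simple objects via semi-simplicity of $\mmm(S)$, then argue by Noetherian induction, splitting according to whether the chosen generic point sees $N$ (your dichotomy via $\mathrm{Supp}(N)$ is an equivalent repackaging of the paper's case $j_K^*M\neq 0$ versus $j_K^*M=0$), and conclude in the first case directly from the characterization in Lemma~\ref{liim}(2).

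Two small remarks. First, in your final paragraph the direction of specialization is reversed: the trivial case is when $K'$ is a proper specialization of $K_i$ (then $K_i\notin\overline{\{K'\}}$, so $j_{K_i}^*$ already kills the summand); the case requiring an argument is when $K_i$ is a proper specialization of $K'$, and it is there that one needs the degree estimate. Second, for this last estimate you go through $\hetl$ and the classical strict-support properties of perverse intermediate extensions, which is valid (conservativity plus $t$-exactness of $\hetl$ lets you detect vanishing of $H^{t_l}_{-d_i}$). The paper instead handles all three cases purely motivically: for $K'=K$ use that $j^d_{K!*}$ is a lift of $M[d]$; for $K'\notin\overline{\{K\}}$ use $j^*i_*=0$; and for $K'\in\overline{\{K\}}\setminus\{K\}$ reduce to $S=\overline{\{K\}}$ and invoke $i^*j_{!*}X'\cong\tau_{\le -1}i^*j_*X'$ from Proposition~\ref{pglu}(IV\ref{iim}) together with Lemma~\ref{lfumts}(6). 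This is slightly cleaner, since it avoids the detour through \'etale realization and keeps the argument entirely inside $\dmc(-)$.
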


\begin{proof} First we verify that $M$ can be decomposed into a direct sum of some $j^{d_i}_{K_i!*}M_i$  (somehow). Since $\mmm(S)$ is semisimple, it suffices to prove: if $M$ is indecomposable, then it can be presented  as $j^{d_M}_{K_M!*}M_K$ for some $K_M\in \sss$  of dimension $d_M$ and $M_K\in \mm_{m-d}(K_M)$. We prove this statement by Noetherian induction (applying Remark \ref{rnicesub}(1) again). 

We take $K$ being a generic point of $S$. If $j_K^*M\neq 0$, 
Lemma \ref{liim} 
immediately implies that we can take $K_M=K$, $M_K=j_K^*M[-d]$. Conversely, if $j_K^*M=0$, then 
there exists an open immersion $j:U\to S$ ($K\in U$) such that $j^*M=0$.
 Hence for the complementary  closed embedding $i:Z\to S$ there exists a (simple) $M_Z\in \mmm(Z)$ such that $M\cong i_*M_Z$ (since $i_*$ is Chow-weight and $t$-exact). 
Hence it suffices to apply the inductive assumption to $M_Z$ (see (\ref{eiimk})). 

It remains to verify: for $K,K'\in \sss$ of dimensions $d$ and $d'$ respectively, $M\in \mm_n(K)$ ($n\in \z$) we have:  $H_{-d'}^{t_l}(j_{K'}^*j^d_{K!*}(M))=0$ if $K'\neq K$ and $=M$ otherwise. We consider three cases here: 1) $K'=K$, 2) $K'$ belongs to the closure $Z$ of $K$ in $S$, and 3) $K'$ does not belong to $Z$.

Denote the embedding of $Z$ into $S$ by $i$; denote the complementary immersion by $j$ and the morphism $K\to Z$ by $j_K^Z$. In case 1) it suffices to note that  $j^{Z,d}_{K!*}M$ is a lift of $M[d]$ to $\dmc(Z)$, whereas $i^*i_*=1_{\dmc(Z)}$. In case 3) it suffices to note that $j_{K'}^*$ factors through $j$ and that $j^*i_*=0$. In case 2) we can assume that $Z=S$ (since $i^*i_*=1_{\dmc(Z)}$); then our claim easily follows from Proposition \ref{pglu}(IV\ref{iim}).

\end{proof}

\begin{rema}\label{rdect}
1. The 'usual' Topological Decomposition theorem (see Theorem 5.7 of \cite{haco}) states (for $S$ being a variety over a field): if $X\to S$ is a proper morphism, $X$ is regular, then $f_*\ql{}_X\in \obj \dhsl$ splits as a direct sum of its $t$-homology, whereas its homology (perverse) sheaves can be presented as  direct sums of intermediate images of pure $\ql$-local systems supported on some subvarieties of $S$. 
We verify that this decomposition can be lifted to $\dmcs$
(hence, we can improve Theorem 5.14 of \cite{haco}) even if we replace $\ql{}_X$ here by  
$K=\hetl(N)$ for an $N\in \hetlz(\mm_n(X))$ (for some $n\in \z$) and do not demand $X$ to be regular.

First we note that $f_*K=\hetl (f_*N)$ (see Theorem  \ref{tetre}(I)), whereas $f_*N\in \dmcs_{\wchow=n}$ (see Theorem \ref{twchow}(II.1)). By Proposition \ref{pcor}(I5) we obtain that $t_l$ splits $f_*N$ into a direct sum of  objects of $MM_j(S)[n-j]$ (this statement is the relative generalization of the existence of Chow-Kunneth decompositions).

Hence in order to fulfill our goal it suffices  to verify (by the virtue of Proposition \ref{pdecomp}; for an $m\in \z$) for any $M\in \obj \mmm(S)$ that
 the (perverse) homology of the corresponding $j^{d_i}_{K_i!*}M_i$ can be presented as the 
the intermediate image of  a $\ql$-local system supported on some regular connected subvariety $U_i$ of $S$, whereas $K_i$ is the generic point of $U_i$ (cf. Theorem 4.3.1(ii) of \cite{bbd}). 
As we have verified above, for any such $U_i$ the motif  $j^{d_i}_{K_i!*}M_i$ can be presented as $ j_{U_i!*}M_{U_i}$ for some $M_{U_i}\in \obj \mmm(U_i)$  
 (here we denote by $j_{U_i!*}$ the composite of the intermediate image functor for the embedding of $U_i$ into its closure $Z_i$ with the direct image $\dmc(Z_i)\to \dmcs$). We should prove that we can choose $U_i,M_{U_i}$ such that   $\hetl(M_{U_i})\in \obj \shs$ is a local $\ql$-system on $U_i$.

By Theorem \ref{twchow}(III2), we can assume (if we choose $U_i$ to be small enough) that there 
exist: a regular scheme $U'_i$, a   
finite universal homeomorphism $g:U'_i\to U_i$, a smooth projective morphism $h:P_i\to U'_i$, and an $s\in \z$ such that $M_{U_i}$ is a retract of $(g\circ h)_*\q_P(s)[2s+m]$. It remains to note that the homology sheaves of $(g\circ h)_*\ql{}_{P_i}(s)\in \obj \dhuli$ are pure local systems (since this is true for the 'canonical' homology of this derived category, and $U_i$ is regular, we obtain that the perverse homology equals the canonical one).

2. More generally, consider $I$ being a retract of an object $I'$ of $ \hetl(\bigoplus_{m,j\in\z}\mmm(X)[j])$.
As noted in Remark \ref{rhaco}(4), this condition is fulfilled (in particular) if  $I$ is a semisimple 
 complex 
 of geometric origin (in the sense of \cite{bbd}). Then $f_*I'$ belongs to $ \hetl(\bigoplus_{m,j\in\z}\mmm(S)[j])$.
 Since local systems over subschemes of $S$ yield Krull-Schmidt subcategories, we obtain that $f_*I$ can be presented as a direct sum of retracts of $\hetl( j_{U_i!*}M_{U_i}[s_i])$ for $(U_i,M_{U_i},s_i)$ corresponding to $f_*I'$.

Thus we obtain a certain motivic analogue of Theorem 6.2.5 of \cite{bbd}.
Yet note that (in contrast with loc. cit.) $f_*$ does not preserve semisimplicity of perverse sheaves in general. Indeed, even if we take $S=\spe K$ for a (general) field $K$ and a smooth projective $X/K$, then the \'etale cohomology of $X_{K^{sep}}$ need not be semisimple as  $\operatorname{Gal}(K)$-representations (for example, for $K=\q_p$ we do not have semisimplicity for 
 $H^1_{et}$ of an elliptic curve with split multiplicative reduction; see Exercise 5.13 of \cite{siladv}). 


\end{rema}

Now we are (also) able to calculate $K_0(\dmcs)$.

\begin{coro}\label{ckz}
Define  $K_0(\dmcs)$ as the abelian group whose generators are $[C],\ C\in \obj \dmcs$; if
$D\to B\to C\to D[1]$ is a distinguished triangle in $\dmcs$ then we set
$[B]=[C]+[D]$.

Assume that  a nice $t_l$ exists over $S$.
Then $K_0(\dmcs)$ is a free abelian group with a basis 
indexed by isomorphism classes of indecomposable objects of $\mmm(K_i)$ for $K_i$ running through all elements of $\sss$, $m$ running through all integers.

\end{coro}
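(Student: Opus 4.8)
The plan is to compute $K_0(\dmcs)$ in two stages: first reduce the triangulated Grothendieck group to the Grothendieck group of the abelian heart $\mm(S)$, then split the latter along the weight filtration, and finally read off a basis from the classification of simple pure motives provided by Proposition \ref{pdecomp}. Concretely, I would first show that the Euler characteristic map $[C]\mapsto\sum_{i\in\z}(-1)^i[H^{t_l}_i(C)]$ is an isomorphism from $K_0(\dmcs)$ onto the Grothendieck group $K_0(\mm(S))$ of the abelian category $\mm(S)=\dmcs^{t_l=0}$ (generators: isomorphism classes of objects; relations: $[B]=[A]+[C]$ for short exact sequences $0\to A\to B\to C\to 0$). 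This is the standard identification of the $K_0$ of a triangulated category carrying a bounded $t$-structure with that of its heart: the map is well defined since a distinguished triangle gives a long exact homology sequence whose alternating sum of classes vanishes; it is surjective since every object of $\mm(S)$ is its own $H^{t_l}_0$; and it is injective by dévissage, using that $t_l$ is bounded (Remark \ref{rbound}(1)) so that every object of $\dmcs$ is built from objects of $\mm(S)[i]$, $i\in\z$, by finitely many extensions.

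Next I would split $K_0(\mm(S))$ according to weights. By Proposition \ref{ptrans}(II4) every $X\in\mm(S)$ carries a functorial increasing filtration $W_{\le\bullet}X$ whose $m$-th graded piece $\mathrm{gr}^W_m X$ lies in $\mmm(S)$; since $\wchow$ is bounded this filtration is finite, so $[X]=\sum_{m\in\z}[\mathrm{gr}^W_m X]$ in $K_0(\mm(S))$. The functors $\mathrm{gr}^W_m$ are exact (Lemma \ref{lwfil}(3,4)), so $X\mapsto(\,[\mathrm{gr}^W_m X]\,)_{m\in\z}$ is a well-defined homomorphism $K_0(\mm(S))\to\bigoplus_{m\in\z}K_0(\mmm(S))$; together with the maps induced by the exact embeddings $\mmm(S)\hookrightarrow\mm(S)$, and using $\mmm(S)\perp\mm_j(S)$ for $m\neq j$ (Lemma \ref{lwfil}(4)) to check that the composite in the other order is the identity, this gives $K_0(\mm(S))\cong\bigoplus_{m\in\z}K_0(\mmm(S))$. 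Since each $\mmm(S)$ is semisimple abelian by Proposition \ref{pcor}(I1), $K_0(\mmm(S))$ is the free abelian group on the set of isomorphism classes of its simple objects.

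It remains to identify those simple objects pointwise. A nice $t_l$ exists over $S$, hence over every $K\in\sss$ (Remark \ref{rnicesub}(1)), so each $\mm_j(K)$ is semisimple and "indecomposable" coincides with "simple" there. By Proposition \ref{pdecomp}, every simple object of $\mmm(S)$ is isomorphic to $j^{d}_{K!*}M$ for a unique $K\in\sss$ (of dimension $d$, say) and a unique isomorphism class of simple $M\in\mm_{m-d}(K)$; conversely, by Lemma \ref{liim}, each such pair $(K,M)$ gives a simple object of $\mmm(S)$, and since $j^{d}_{K!*}$ is a full embedding distinct pairs give non-isomorphic objects. Combining the three steps, $K_0(\dmcs)$ is free abelian with basis indexed by triples $(m,K,[M])$, $m\in\z$, $K\in\sss$, $[M]$ an isomorphism class of indecomposable object of $\mm_{m-\dim K}(K)$; for each fixed $K$ the substitution $m\mapsto m-\dim K$ is a bijection of $\z$, so this index set is precisely the set of isomorphism classes of indecomposable objects of $\mmm(K_i)$ as $K_i$ ranges over $\sss$ and $m$ over $\z$. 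The sum is a genuine direct sum, since every class in $K_0(\dmcs)$ is a finite $\z$-combination of basis elements: any single object meets only finitely many weights $m$ and, through Proposition \ref{pdecomp}, only finitely many points.

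As for difficulty: the whole argument is a two-step dévissage, and once Proposition \ref{pdecomp} is in hand it is essentially formal. The two points needing care are (i) that the triangulated $K_0$ genuinely collapses onto $K_0(\mm(S))$, which rests only on boundedness of $t_l$, and (ii) the bookkeeping matching the weight-graded decomposition of $K_0(\mm(S))$ with the pointwise classification of simples across the reindexing in Proposition \ref{ptrans}(II4) and Proposition \ref{pdecomp}. I expect (ii) — verifying that the Chow weight structure and the motivic $t$-structure interact on $K_0$ exactly as cleanly as transversality predicts — to be the only place where one must be at all careful, though I anticipate no real obstacle.
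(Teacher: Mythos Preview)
Your proposal is correct and follows essentially the same route as the paper. The paper's proof invokes Proposition 1.2.6 of \cite{btrans} to conclude directly that $K_0(\dmcs)$ is free abelian on isomorphism classes of indecomposable objects of $\mmm(S)$ (for $m\in\z$), and then applies Proposition \ref{pdecomp}; your steps (i)--(iii) simply unpack the content of that cited proposition via the bounded $t$-structure and the exact weight-graded functors, after which your final step is identical to the paper's.
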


\begin{proof}
By Proposition 1.2.6 of \cite{btrans},  $K_0(\dmcs)$ is a free abelian group with a basis 
indexed by isomorphism classes of indecomposable objects of $\mmm(S)$ (for $m$ running through all integers). Now the result follows from Proposition \ref{pdecomp} easily. 
\end{proof}

\subsection{Changing $l$; the case of 
\texorpdfstring{$\operatorname{Spec}\,\z$}{Spec Z}-schemes}\label{sindl}

First we study the question when $\tlp$ exists and coincides with $t_l$ for prime $l\neq l'$ (we fix the primes, and define $\tlp$ similarly to $t_l$).  

\begin{pr}\label{pcttl} 
1. Suppose that $\hetl$ and $\hetlp$ exist and coincide over universal domains of all 
characteristics $\neq l,l'$. Then they also exist and coincide over any reasonable $\zollp$-scheme
$S$.

In particular, this assertion holds if $\hetl$ and $\hetlp$ are nice over the universal domains mentioned.

2. Suppose that for any prime $p\neq l,l'$ there exists a universal domain $K$ of characteristic $p$ such that: there exists a nice $t_l$ for $\dmck$, and $\hetlp$-homological equivalence of cycles coincides with the numerical equivalence one over $K$.  

Then $t_l$ and $\tlp$ exist and coincide on $\dmcs$ for any reasonable $\zollp$-scheme
$S$. 

\end{pr}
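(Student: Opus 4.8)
Both parts rest on the same backbone: reduce the coincidence of $t_l$ and $\tlp$ over $S$ to their coincidence over the residue fields of $S$, and then descend the latter to the universal domains supplied by the hypotheses. Since $S$ is a $\zollp$-scheme, the characteristic of every $K\in\sss$ is distinct from both $l$ and $l'$; hence for each such characteristic the hypotheses furnish a universal domain of that characteristic over which $t_l$ (resp.\ $\tlp$) exists, so Proposition \ref{pred}(3) yields $t_l$ (resp.\ $\tlp$) over every $K\in\sss$, and Theorem \ref{tmts} then yields $t_l$ (resp.\ $\tlp$) over $\dmcs$ itself. By Lemma \ref{lfumts}(7) (using that the perverse $t$-structure, and hence $t_l$, is detected pointwise) the classes $\dmcs^{t_l\le 0}$ and $\dmcs^{t_l\ge 0}$ are determined by the corresponding classes $\dmck^{t_l\le 0},\dmck^{t_l\ge 0}$ for $K$ running over $\sss$ --- and likewise for $\tlp$ --- so it will suffice to prove $\dmck^{t_l\le 0}=\dmck^{\tlp\le 0}$ and $\dmck^{t_l\ge 0}=\dmck^{\tlp\ge 0}$ for every field $K$ of characteristic $\neq l,l'$.

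To obtain this field-level coincidence, fix such a $K$, embed it in a universal domain $K'$ of the same characteristic, and let $b:\spe K'\to\spe K$ be the structure morphism (a legitimate functor in the $\dmc(-)$-formalism by Proposition \ref{pcisdeg}(\ref{ridmot})). The crux is the equivalence $M\in\dmck^{t_l\le 0}\iff b^*M\in\dmc(K')^{t_l\le 0}$, and similarly with $\le 0$ replaced by $\ge 0$ and with $l$ replaced by $l'$. Indeed, unwinding Definition \ref{dmts}(1) and commuting $\hetl$ past $b^*$ by Proposition \ref{pcisdeg}(\ref{ietre}), this reduces to the statement that a complex in $\dshkl$ lies in a perverse (over a field: canonical) $t$-half exactly when its pullback to $K'$ does, which holds because pullback of $\ql$-sheaves along a field extension is exact and faithful (SGA 4, Expos\'e VIII, Proposition 9.1), hence $t$-exact and conservative for the canonical $t$-structure. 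Granting the equivalence, $M\in\dmck^{t_l\le 0}\iff b^*M\in\dmc(K')^{t_l\le 0}\iff b^*M\in\dmc(K')^{\tlp\le 0}\iff M\in\dmck^{\tlp\le 0}$ --- the middle step being exactly the hypothesis $t_l=\tlp$ over $K'$ --- and similarly for the $\ge 0$ halves. This proves Part 1.

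For Part 2 it remains to check that its hypotheses force $t_l=\tlp$ over each of the universal domains $K$ in play, so that Part 1 applies. Over such a $K$ the existence of a \emph{nice} $t_l$ already encodes the ``standard'' package (cf.\ \S\ref{sconj} and the converse direction of Proposition \ref{pbeil}): Murre's conjectures A, D and Van hold and $\ql$-homological equivalence coincides with numerical equivalence, so the Kunneth projectors of smooth projective $K$-varieties are algebraic and, modulo numerical equivalence, canonical; combined with the assumption that $\qlp$-homological equivalence also agrees with numerical equivalence over $K$, this yields Murre A, D and Van for the $\qlp$-realization too, whence \S\ref{sconj} produces a nice $\tlp$ over $K$. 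With $t_l$ and $\tlp$ both nice over $K$, the splitting data $\au_m=\dmck^{t=0}\cap\dmck_{\wchow=m}$ of the two are cut out of $\chow(K)$ by the same numerically-canonical (hence $l$-independent) Kunneth projectors, so $t_l=\tlp$ over $K$ by Proposition \ref{ptrans}(IV). Residue fields of characteristic $0$ --- present when $S$ is a $\q$-scheme --- are handled separately: once $t_l$ and $\tlp$ are assumed to exist over $\mathbb{C}$, the Betti comparison identifies the two realizations, whence $t_l=\tlp$ there and niceness is automatic by Corollary \ref{cmtvs}(2). Applying Part 1 now gives $t_l=\tlp$ over every reasonable $\zollp$-scheme. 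Finally, the ``in particular'' clause of Part 1 is the case in which $\tlp$, too, is assumed nice over the universal domains: by the converse of Proposition \ref{pbeil} this forces $\qlp$-homological $=$ numerical equivalence there, so the hypotheses of Part 2 hold.

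\textbf{Main obstacle.} The delicate point will be the $(l,l')$-bookkeeping in Part 2: checking that ``nice $t_l$ over $K$'' together with ``$\qlp$-homological $=$ numerical over $K$'' really supplies \emph{all} the input (Murre A, D, Van; algebraicity and numerical canonicity of the Kunneth projectors) needed to invoke the argument of \S\ref{sconj} for the prime $l'$, and that the resulting semisimple pieces $\au_m$ genuinely do not depend on the chosen prime. By contrast the field-descent of Part 1 is essentially formal once exactness and faithfulness of sheaf base change along field extensions are granted.
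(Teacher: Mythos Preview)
Your backbone for Part 1 matches the paper's: reduce to residue fields via Lemma \ref{lfumts}(7), then to universal domains by a base-change/spreading argument (the paper just says ``an argument similar to the one used in the proof of Corollary \ref{ctvs}(1)''; you spell out the sheaf-theoretic step directly). This is fine.

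Where you diverge is in Part 2 and in the ``in particular'' clause. For the latter, the paper does not reduce to Part 2; it simply invokes Proposition 4.5 of \cite{beilnmot}, which says directly that two nice motivic $t$-structures over a universal domain coincide. For Part 2 itself, the paper does \emph{not} rebuild a nice $\tlp$ via \S\ref{sconj}. Instead it argues more economically: from the niceness of $t_l$ it deduces that $\ql$-homological equivalence equals numerical equivalence over $K$ (so the Chow-Kunneth projectors supplied by $t_l$ are numerically determined), and then Proposition 5.4 of \cite{klei} (with \cite{smi} replacing the Hodge standard conjecture by Standard Conjecture D) forces these very projectors to be Kunneth projectors for $\hetlp$ as well. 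Hence $\hetlp$ sends each $\mmm(K)$ into the heart of $\dhklp$; Proposition \ref{pcor}(I2) then gives $\dmck^{t_l=0}=\dmck^{\tlp=0}$, and boundedness finishes. Your route --- reconstruct a nice $\tlp$ from the standard package and then match the semisimple graded pieces --- is correct in outline, but it secretly relies on the same Kleiman-type input (the step ``Kunneth projectors \dots\ modulo numerical equivalence, canonical'' is precisely \cite{klei}, Prop.~5.4, which you should cite rather than fold into ``\S\ref{sconj}''). The paper's approach is shorter because it never needs to verify that the full list of Murre's conjectures transfers to $l'$: it only needs that $\hetlp$ is $t_l$-exact on the heart, which follows immediately once the projectors match. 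Your approach has the virtue of being more self-contained within the paper's own framework, at the cost of a longer detour through \S\ref{sconj}.
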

\begin{proof}

1. First we note that the niceness of $t_l$ and $t_l'$ for $\dmck$ yields that  they coincide (on $\dmck$) by Proposition 4.5 of \cite{beilnmot}. 

So, it remains to 
verify that $t_l$ and $t_l'$ exist and coincide over $S$ if this is true over universal domains (of all 
characteristics $\neq l,l'$).

By 
Corollary \ref{ctvs}(1), $t_l$ and $\tlp$  
 exist for $\dmcs$. By Lemma \ref{lfumts}(7), it suffices to verify that $\dmc(F)^{\tlp\le 0}=\dmc(F)^{t_l\le 0}$ and $\dmc(F)^{\tlp\ge 0}=\dmc(F)^{t_l\ge 0}$ for any point $F$ of $S$. An argument similar to the one used in the proof of  Corollary \ref{ctvs}(1) yields that we can replace $F$ by one of our universal domains (of the same characteristic).

2. By the previous assertion, it suffices to verify that $t_l$ coincides with $\tlp$ over $K$. 

Since all $\mmi(K)=\dmck^{t_l=0}\cap \dmck_{\wchow =i}$ are semisimple, we obtain that $\hetl$-homological equivalence of cycles 
is equivalent to numerical equivalence (over $K$).   Indeed, 
consider the functor  $\bigoplus Gr^{t_l}_m:\chow(K)\to \bigoplus \mmm(K)$  that 
is given by the direct sum of all (shifted) $t_l$-homology of Chow motives; see Remark \ref{rhaco}(2). It
kills exactly those morphisms of Chow motives that are $\ql$-homologically equivalent to zero as cycles (since $\hetl$ does not kill non-zero morphisms in $\mmm(K)$). 
It remains to note that $\ql$-homological equivalence is finer than the numerical one, and the category of numerical motives (over $K$) is semisimple (see
\cite{ja}). Besides, $\hetlp$ is also conservative on $\mmm(K)$ for any $m\in\z$.

Now, 
for a smooth projective $P/K$ 
Proposition 5.4 of \cite{klei} implies (in our setting) that
the (Chow)-Kunneth decomposition of the motif of $P$ 
 coming from $t_l$ is also its Kunneth decomposition with respect to $\hetlp$. Indeed, by loc. cit. the numerical equivalence classes of the corresponding projectors do not depend on the choice of a Weil (co)homology theory; note that (by the main result of \cite{smi}) we can replace the  Hodge standard conjecture by the Standard Conjecture D (see \S\ref{sconj}) in the assumptions of  \cite{klei}. 
Hence $\hetlp$ sends $\mmm(K)$ 
into 
$\dhklp^{t=0}$. Then Proposition \ref{pcor}(I2) 
easily yields
  that $\dmck^{\tlp= 0}=\dmck^{t_l=0}$; boundedness yields that $\tlp$ coincides with $t_l$ (see Remark \ref{rts}(3)). 

\end{proof}

Now suppose that $S$ is not (necessarily) a $\szol$-scheme. We note: $\hetl$ kills all motives 'supported on' $S\times_{\spe \z}\spe \fp$. In order to overcome this difficulty we introduce the following definition.

\begin{defi}\label{dmtsllp}
Let $l\neq l'$ be primes. Consider the  class $\dmcs^{\tllp\le 0}$ (resp. $\dmcs^{\tllp\ge 0}$) consisting of those $M\in \obj\dmcs$
that satisfy:  $\hetl(M)\in \dshsl^{t_l\le 0}$ and  $\hetlp(M)\in \dshslp^{\tlp\le 0}$ (resp. $\hetl(M)\in \dshsl^{t_l\ge 0}$ and  $\hetlp(M)\in \dshslp^{\tlp\ge 0}$).

If $(\dmcs^{\tllp\le 0},\ \dmcs^{\tllp\ge 0})$ yield a 
$t$-structure for $\dmcs$, we will say that (the $t$-structure) $\tllp$ exists over $S$. 

\end{defi} 

Now we observe that 'standard' conjectures imply: $\tllp$ exists over any (reasonable) $S$ and does not depend on $l$. We 
formulate a certain concise  
result of this kind here; some more (somewhat stronger) statements of this sort can also be easily proved.

\begin{pr}\label{pmts}
Let $l,l'$ be as above.

1.  Suppose that a nice $t_l$ exists over any universal domain of any characteristic $\neq l$; a nice $\tlp$ exists over a universal domain of characteristic $l$, and that  $\hetlp$-homological equivalence of cycles 
is equivalent to numerical equivalence  over any universal domain of any characteristic $\neq l',0$. Then $\tllp$ is a 
 $t$-structure over any (reasonable) $S$.

2. Suppose moreover that for any prime $p$ (distinct from $l,l'$) we have: $\hetp$-homological equivalence of cycles coincides with the numerical equivalence one over any universal domain of any characteristic $\neq p$.
 Then  $\tllp$ does not depend on the choice of the pair $l,l'$. 

\end{pr}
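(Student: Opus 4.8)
For part 1, the plan is to imitate the globalization argument of Theorem \ref{tmts}. By Remark \ref{rmain}(1) that argument uses only the pointwise characterization of the motivic $t$-structure (the analogue of Lemma \ref{lfumts}(7,8)) and the localization Lemma \ref{lcont}, and both carry over to $\tllp$. Indeed, combining the pointwise descriptions of the perverse $t$-structures on $\dshsl$ (over the points of $S[1/l]$) and on $\dshslp$ (over the points of $S[1/l']$) with Proposition \ref{pcisdeg}(\ref{ietre}), one obtains, exactly as in Lemma \ref{lfumts}(7,8), that $M\in \dmcs^{\tllp\le 0}$ (resp. $M\in \dmcs^{\tllp\ge 0}$) if and only if $j_K^*M[-d]\in \dmck^{\tllp\le 0}$ (resp. $j_K^!M[-d]\in \dmck^{\tllp\ge 0}$) for every point $K$ of $S$ of dimension $d$; here at a point of characteristic $l$ (resp. $l'$) the right-hand side simply means $j_K^*M[-d]\in\dmck^{\tlp\le 0}$ (resp. $j_K^!M[-d]\in\dmck^{t_l\ge 0}$), the other $\ell$-adic realization being zero over such a point. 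The analogue of Lemma \ref{lcont} for $\tllp$ follows by intersecting the open neighbourhoods produced by Lemma \ref{lcont} applied to $t_l$ and to $\tlp$. Hence, running the proofs of Theorem \ref{tmts} and Proposition \ref{pred}(3) verbatim (noetherian induction, Proposition \ref{pglu}(I), Proposition \ref{pcisdeg}(\ref{icont},\ref{il4onepoII})), the claim of part 1 reduces to showing that $\tllp$ is a $t$-structure over every universal domain $K$.

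Now let $K$ be a universal domain of characteristic $p$. If $p=l$ then $\tllp$ over $K$ is just $\tlp$, which exists (and is nice) by the standing hypothesis together with Proposition \ref{pred}(3); if $p=l'$ then $\tllp$ over $K$ is $t_l$, which exists by hypothesis. If $p\notin\{l,l'\}$, it suffices to check that $t_l=\tlp$ on $\dmck$. A nice $t_l$ exists over $K$ by hypothesis; if $p>0$ then $\mathbb{Q}_{l'}$-adic homological equivalence of cycles equals numerical equivalence over $K$ by hypothesis, and if $p=0$ the same holds because niceness of $t_l$ forces $\ql$-adic homological equivalence to equal numerical equivalence over $K$ (the functor $\bigoplus Gr^{t_l}_m$ of Remark \ref{rhaco}(2) kills precisely the $\ql$-homologically trivial cycle classes, and numerical motives are semisimple by \cite{ja}), while over a characteristic $0$ universal domain $\ql$- and $\mathbb{Q}_{l'}$-adic homological equivalence coincide (descend to a finitely generated subfield, embed it into $\mathbb{C}$, apply Artin's comparison theorem). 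In either case the argument from the proof of Proposition \ref{pcttl}(2) --- semisimplicity of the categories $\mmi(K)$, Proposition 5.4 of \cite{klei} (using the main result of \cite{smi} to replace the Hodge standard conjecture by Standard Conjecture D), and Proposition \ref{pcor}(I2) --- shows that $\tlp$ exists and coincides with $t_l$. This proves part 1.

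For part 2, the first step is to observe that, under the additional hypothesis, the hypotheses of part 1 are satisfied for every pair of distinct primes $(l_1,l_1')$. This is a case check over a universal domain $K$ of characteristic $p$: for $p\neq l_1$ one has a nice $t_l$ or $\tlp$ over $K$ (by the hypotheses of part 1 and the previous paragraph), and one has ``$\mathbb{Q}_{l_1}$-adic homological equivalence $=$ numerical equivalence over $K$'' --- by the present hypothesis when $l_1\notin\{l,l'\}$, and trivially otherwise since then $t_{l_1}$ is one of $t_l,\tlp$ --- so the Proposition \ref{pcttl}(2) argument yields a nice $t_{l_1}$ over $K$ agreeing there with whichever of $t_l,\tlp$ is defined; the analogous computation gives the coincidence of $\mathbb{Q}_{l_1'}$-adic and numerical equivalence over universal domains of characteristic $\neq l_1',0$. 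Thus part 1 applies to $(l_1,l_1')$, so $t_{l_1,l_1'}$ is a $t$-structure over any reasonable $S$, and all these $t$-structures admit the pointwise description from part 1. It remains to compare $t_{l_1,l_1'}$ with $\tllp$ over each point $K$ of $S$; reducing as in the proof of Proposition \ref{pcttl}(1) to a universal domain of characteristic $p$, this is immediate because each of $t_l,\tlp,t_{l_1},t_{l_1'}$ that is defined over $K$ is nice there, and any two nice motivic $t$-structures on $\dmck$ coincide --- by Proposition 4.5 of \cite{beilnmot} when $p=0$, and by the ``homological $=$ numerical equivalence'' argument of Proposition \ref{pcttl}(2) when $p>0$.

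The main obstacle is the bookkeeping at the two ``boundary'' characteristics $l$ and $l'$, where one of the $\ell$-adic realizations defining $\tllp$ vanishes identically: there the pointwise description of $\tllp$ degenerates to a single realization, and the existence of $\tlp$ over characteristic $l$ fields --- which cannot be extracted from $t_l$ --- must be imported from the standing hypothesis; one must check that the globalization argument, and in part 2 the comparison of two pairs, still functions over such points. A secondary subtlety is the characteristic $0$ case of part 1, where the numerical-equivalence input is deliberately not assumed and has to be recovered from niceness of $t_l$ via the comparison between $\ell$- and $\ell'$-adic cohomology.
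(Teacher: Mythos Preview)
Your proof is correct, but it takes a different route from the paper's. The paper does not re-run the noetherian induction of Theorem \ref{tmts} for $\tllp$. Instead, after establishing the same pointwise characterization of $\tllp$ that you use, it observes that $t_l$ already exists on $S[1/l]$ (by Proposition \ref{pred}(3)) and $\tlp$ exists on $S\times_{\spe\z}\spe\fl$ (same proposition, using the hypothesis on characteristic $l$), and then simply \emph{glues} these two $t$-structures via Proposition \ref{pglu}(II1) and Proposition \ref{pcisdeg}(\ref{iglu}). The pointwise description, together with $t_l=\tlp$ over fields of characteristic $\neq l,l'$ (Proposition \ref{pcttl}), then identifies the glued $t$-structure with $\tllp$. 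Part 2 in the paper is dispatched in one line by invoking Proposition \ref{pcttl} again.

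Both approaches hinge on the same two ingredients: the pointwise description of $\tllp$ and the coincidence $t_l=\tlp$ over fields of characteristic $\neq l,l'$. Your argument is more self-contained but reproves machinery already available; the paper's gluing argument is shorter and more conceptual, since it recycles the globalization theorem rather than imitating its proof. Your handling of the characteristic $0$ case (recovering $\qlp$-homological${}={}$numerical from niceness of $t_l$ via Artin comparison) matches the paper's reasoning; the paper phrases it slightly more directly by saying that over characteristic $0$ universal domains homological equivalence can be described via singular cohomology and hence is independent of $l$.
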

\begin{proof}
1. It is easily seen that $\tllp$ can be characterized similarly to Lemma \ref{lfumts}(7) i.e.: $M\in \dmcs^{\tllp\le 0}$ (resp. $M\in \dmcs^{\tllp\ge 0}$) if and only if
for any $K\in \sss$, $K$ is of 
dimension $d$, we have $j_{K}^*M[-d]\in \dmck^{\tllp\le 0}$ (resp. $j_{K}^!M[-d]\in \dmck^{\tllp\ge 0}$). Indeed, this is an easy consequence of loc. cit.

Next we note that over characteristic $0$ universal domains the homological equivalence of cycles relation does not depend on the choice of $l$ since it can be described in terms of singular (co)homology. 
Hence   Proposition \ref{pcttl} yields that $t_l=t_l'$ over any field of characteristic $\neq l,l'$.

It easily follows that $\tllp(S)$ can be glued from $t_l$ for $S[1/l]$ and $t_{l'}$ for $S\times_{\spe \z}\spe \fl$; see Proposition \ref{pglu}(II.1) and Proposition \ref{pcisdeg}(\ref{iglu}). 


2. It suffices to apply Proposition \ref{pcttl} (again) and the arguments described above. 

\end{proof}

\end{document}